\date{September 10, 2020}
\title[Curved Foldings]
{%
On the existence of four or more curved foldings with common 
creases  and crease patterns}
\author{A. Honda}
\address[Atsufumi Honda]{
Department of Applied Mathematics, 
Faculty of Engineering, Yokohama National University,
79-5 Tokiwadai, Hodogaya, Yokohama 240-8501, Japan
}
\email{honda-atsufumi-kp@ynu.ac.jp}
\author{K.~Naokawa}
\address[Kosuke Naokawa]{%
Department of Computer Science, 
Faculty of Applied Information Science,
Hiroshima Institute of Technology,  
2-1-1 Miyake, Saeki, Hiroshima, 731-5193, Japan
}
\email{k.naokawa.ec@cc.it-hiroshima.ac.jp}
\author{K.~Saji}
\address[Kentaro Saji]{%
  Department of Mathematics,
  Faculty of Science,
  Kobe University,
  Rokko, Kobe 657-8501}
\email{saji@math.kobe-u.ac.jp}
\author{M.~Umehara}
\address[Masaaki Umehara]{%
  Department of Mathematical and Computing Sciences,
  Tokyo Institute of Technology,
  Tokyo 152-8552, Japan}
\email{umehara@is.titech.ac.jp}
\author{K.~Yamada}
\address[Kotaro Yamada]{%
  Department of Mathematics,
  Tokyo Institute of Technology,
  Tokyo 152-8551, Japan}
\email{kotaro@math.titech.ac.jp}
\newcommand{\op}[1]{{\operatorname{#1}}}
\newcommand{\R}{\boldsymbol{R}}
\newcommand{\Z}{\boldsymbol{Z}}
\newcommand{\Q}{\boldsymbol{Q}}
\newcommand{\mc}[1]{{\mathcal #1}}
\newcommand{\mb}[1]{{\mathbf #1}}
\newcommand{\pmt}[1]{{\begin{pmatrix} #1  \end{pmatrix}}}
\renewcommand{\phi}{\varphi}
\renewcommand{\epsilon}{\varepsilon}
\newcommand{\dy}{\displaystyle}
\renewcommand{\det}{\op{det}}
\numberwithin{equation}{section}
\newtheorem{Theorem}{Theorem}[section]
\newtheorem{Proposition}[Theorem]{Proposition}
\newtheorem{Prop}[Theorem]{Proposition}
\newtheorem{Corollary}[Theorem]{Corollary}
\newtheorem{Cor}[Theorem]{Corollary}
\newtheorem{Lemma}[Theorem]{Lemma}
\newtheorem{Fact}[Theorem]{Fact}
\theoremstyle{definition}
\newtheorem{Def}[Theorem]{Definition}
\newtheorem{Definition}[Theorem]{Definition}
\newtheorem{Remark}[Theorem]{Remark}
\newtheorem{Example}[Theorem]{Example}
\newtheorem*{acknowledgments}{Acknowledgments}
\keywords{
  {curved folding},
  {origami},
  {flat surfaces},
  {developable surfaces}}
\subjclass[2010]{53A05, 51M15}
\thanks{%
The first author was partially supported by 
Grant-in-Aid for Early-Career Scientists
 No.~19K14526 and 
Grant-in-Aid for 
Scientific Research (B) No.~20H01801. 
The second author was partially supported by 
Grant-in-Aid for Young Scientists (B) No.~17K14197,
and the third author
was 
partially supported by  Grant-in-Aid for 
Scientific Research (C) No.\ 18K03301
The fifth author 
was partially 
supported by Grant-in-Aid for 
Scientific Research (B) No.\ 17H02839.
}%
\begin{document}
\maketitle
\begin{abstract}
Consider an oriented curve $\Gamma$ in a domain $D$ in the plane $\R^2$.
Thinking of $D$ as a piece of paper, one can make a curved 
folding in the Euclidean space $\R^3$.
This can be expressed as the image of an 
\lq\lq origami map\rq\rq\ 
$\Phi:D\to \R^3$ such that $\Gamma$ is the 
singular set of $\Phi$, the word \lq\lq origami\rq\rq\ 
coming from the Japanese term for paper folding.
We call the singular set image $C:=\Phi(\Gamma)$ the {\it crease} of $\Phi$
and the singular set $\Gamma$ the {\it crease pattern} of $\Phi$.

We are interested in the number of origami 
maps whose creases and 
crease patterns are $C$ and $\Gamma$, respectively.
Two such possibilities have been known.
In the authors' previous work, two other new possibilities and 
an explicit example with four such non-congruent 
distinct curved foldings were established.

In this paper, we determine the possible values for 
the number $N$ of congruence classes of curved foldings
with the same crease and crease pattern.
As a consequence, if $C$ is a non-closed simple arc,
then $N=4$ if and only if both $\Gamma$ and $C$ do not admit any symmetries.
On the other hand, when $C$ is a closed curve, 
there are infinitely many distinct possibilities 
for curved foldings with the same crease and crease pattern, in general.
\end{abstract}

\begin{figure}[htb]
\begin{center}
 \begin{center}
  \begin{tabular}{ccc}
 \raisebox{0.1in}{%
 \includegraphics[width=0.3\linewidth]{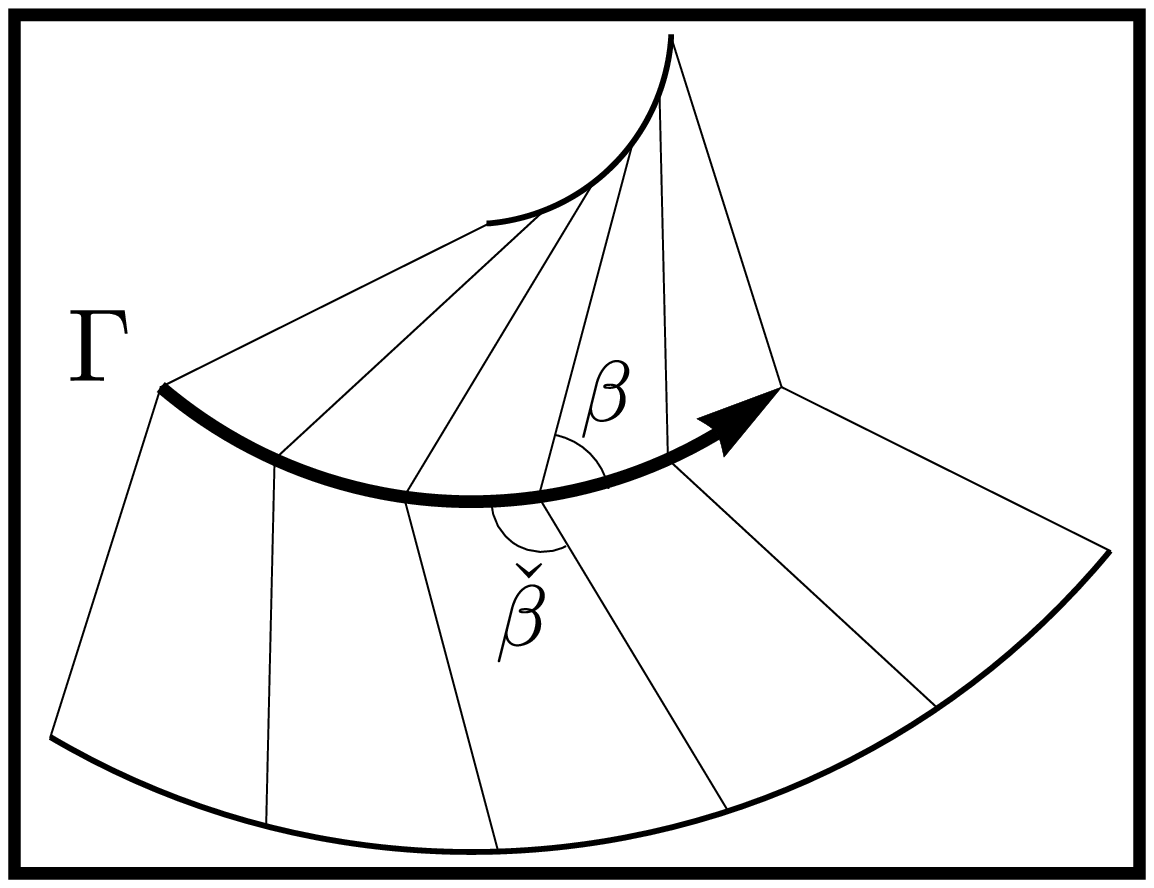}} & \phantom{aaa}
 \raisebox{0.7in}{\includegraphics[width=0.08\linewidth]{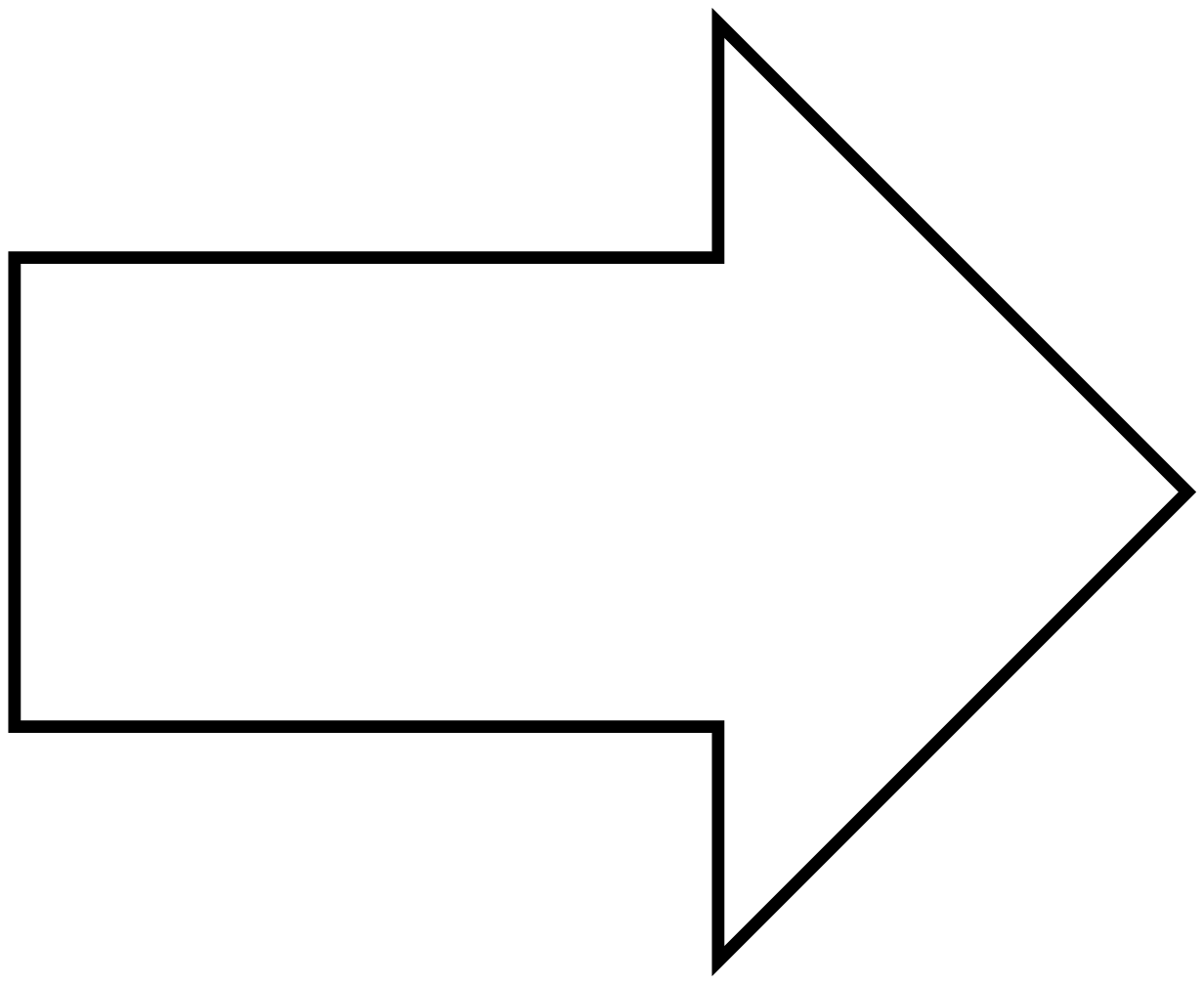}} &
 \includegraphics[width=0.35\linewidth]{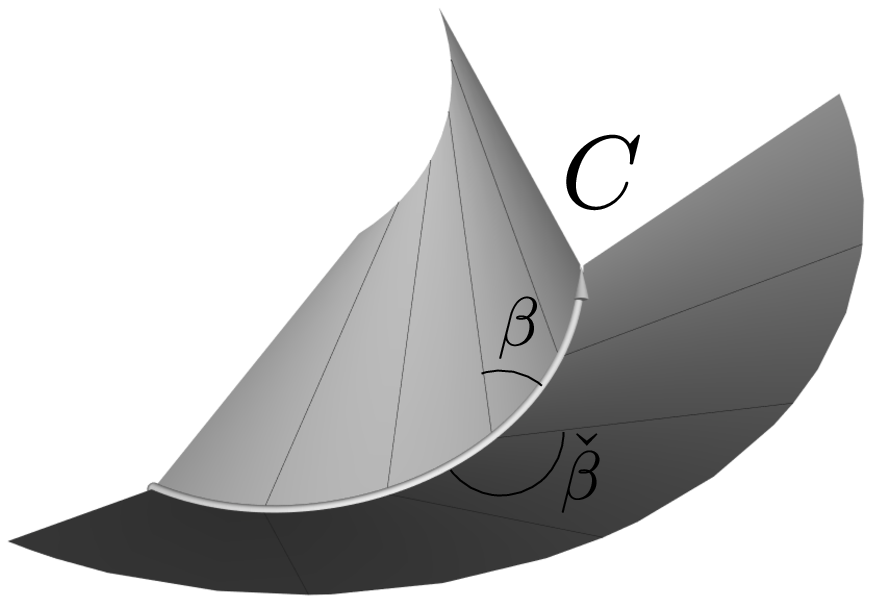}
 \end{tabular}
 \end{center}
\caption{A given crease pattern $\Gamma$  (left)
and its realization (right) 
as a curved folding along the crease $C$.}\label{fig:A}
\end{center}
\end{figure}

\section*{Introduction}
The geometry of curved foldings is, nowadays, 
an important subject not only
from the viewpoint of mathematics but also from the viewpoint of engineering.
Works on this topic have been published by numerous authors; for example,
\cite{DDHKT1,DDHKT2, DD, FT0, Hu, K} and \cite[Lecture 15]{FT1} 
are fundamental references.

Drawing a curve $\Gamma$ in $\R^2$, we think of
a tubular neighborhood of $\Gamma$ as a piece of paper.
Then we can fold this along $\Gamma$, and obtain a 
curved folding in the Euclidean space $\R^3$, 
which is a developable surface
(as a subset of $\R^3$)
whose singular set image is a space curve $C\,(\subset \R^3)$.
In this paper, we fix an orientation of $C$ and denote by $-C$
the image of the same curve with the opposite orientation.  
We denote by $|C|$ without considering its 
orientation.  

In this paper, we focus on curved foldings 
which are produced from a single curve $\Gamma$ in $\R^2$
satisfying the following properties: 
\begin{enumerate}
\item[(i)] 
The length of $\Gamma$ is equal to that of $C$
 (\cite[Page 29 (5)]{FT0}). Moreover, the two curves have 
a bijective correspondence by an arc-length parametrization.
\item[(ii)] The curvature functions of $C$
and $\Gamma$ 
have no zeros (\cite[Page 29 (3)]{FT0}).
\item[(iii)] The curves $C$ and 
$\Gamma$ have no self-intersections.
\item[(iv)] The absolute value of the curvature function of 
$\Gamma$ is less than the curvature function of $C$
at each point of $\Gamma$ (\cite[Page 28 (1)]{FT0}).
\end{enumerate}
If a curved folding satisfies (i)--(iii) and 
the following condition
(stronger than (iv)), then 
it is said to be {\it admissible} 
(cf. \eqref{eq:alpha_f2} and also \eqref{eq:alpha_f20}).
\begin{enumerate}
\item[(iv${}'$)]  
The maximum of the absolute value of the curvature function of 
$\Gamma$ is less than the minimum of the curvature function of 
$C$ (\cite[Page 28 (1)]{FT0}).
\end{enumerate}
For a pair $(\Gamma,|C|)$ giving a curved folding $P$,
there is another possibility for corresponding 
curved foldings (see \cite{FT0}). 
Moreover, in the authors' previous work \cite{Ori},
two additional possibilities were found when $P$ is admissible, 
and an explicit example of four non-congruent curved foldings
with the same crease and crease pattern was given.
The purpose of this paper is to further develop the
discussions in \cite{Ori}.
In fact, we are interested in the number of congruence 
classes of curved foldings with
a given pair of crease $|C|$ and crease pattern $\Gamma$.
Since this number is closely related to the 
symmetries of $|C|(\subset \R^3)$ and $\Gamma(\subset \R^2)$, 
we give the following:

\begin{Def}\label{def:S}
A subset $A$ of the Euclidean space $\R^k$ ($k=2,3$) 
is said to have a {\it symmetry} if 
$T(A)=A$ holds for an isometry $T$ of $\R^k$ which is not the 
identity map, and $T$ is called a {\it symmetry} of $A$.
Moreover, if there is a point $\mb x\in A$ such that 
$T(\mb x)\ne \mb x$, then $A$ is said to have a 
{\it non-trivial symmetry} $T$.
On the other hand, a symmetry $T$ of $A$ is called {\it positive}
 (resp. {\it negative}) if $T$ is an orientation preserving 
(resp. reversing) isometry of $\R^k$.
\end{Def}

We denote by $\mathcal P(\Gamma,|C|)$  
(resp. $\mathcal P_*(\Gamma,|C|)$) 
the set of curved foldings (resp. the set of admissible curved foldings)
whose creases and crease patterns are $C$ and $\Gamma$
satisfying (i)--(iv) (resp. (i)--(iii) and (iv${}'$)),
see \eqref{eq:o2} for the precise definition.
We prove the following, which is a refinement of 
\cite[Theorems A and B]{Ori}:

\medskip
\noindent
{\bf Theorem A.}
{\it Let $C$ be the image of an embedded curve which is defined on a bounded closed interval. Then the number $n$ of the elements in $\mathcal P_*(\Gamma,|C|)$ as subsets in $\R^3$
is four if $\Gamma$ has no symmetries. 
Otherwise, $n$ is equal to two.}

\medskip
\noindent
{\bf  Theorem B.}
{\it Let $C$ be the image of an embedded curve 
which is defined on a bounded closed interval. 
Then the number $N$ of the congruence classes of
curved foldings in $\mathcal P_*(\Gamma,|C|)$ 
satisfies the following:
\begin{enumerate}
\item if $C$ has no symmetries and 
$\Gamma$ also has no symmetries, 
then $N=4$, 
\item if not the case in {\rm (1)}, then $N\le 2$ holds, 
and \item $N=1$ if and only if
\begin{enumerate}
\item $C$ lies in a plane and has a non-trivial symmetry,
\item $C$ lies in a plane and $\Gamma$ has a symmetry, or
\item $C$ does not lie in any plane and
has a positive symmetry, and $\Gamma$ also has a symmetry.
\end{enumerate}
\end{enumerate}}

\medskip
In \cite{Ori}, an example of $\mc P_*(\Gamma,|C|)$ consisting of four non-congruent subsets in $\R^3$ was given by computing the mean curvature functions along $C$.
However, this approach seems insufficient to prove Theorem B
(see Proposition \ref{prop:H} in Section 4). 
So, in this paper, we will prepare several new 
techniques for its proof (see Sections 2, 3 and 4).

\begin{figure}[hbt]%
 \begin{center}
  \begin{tabular}{c@{\hspace{5em}}c}
       \includegraphics[width=3.0cm]{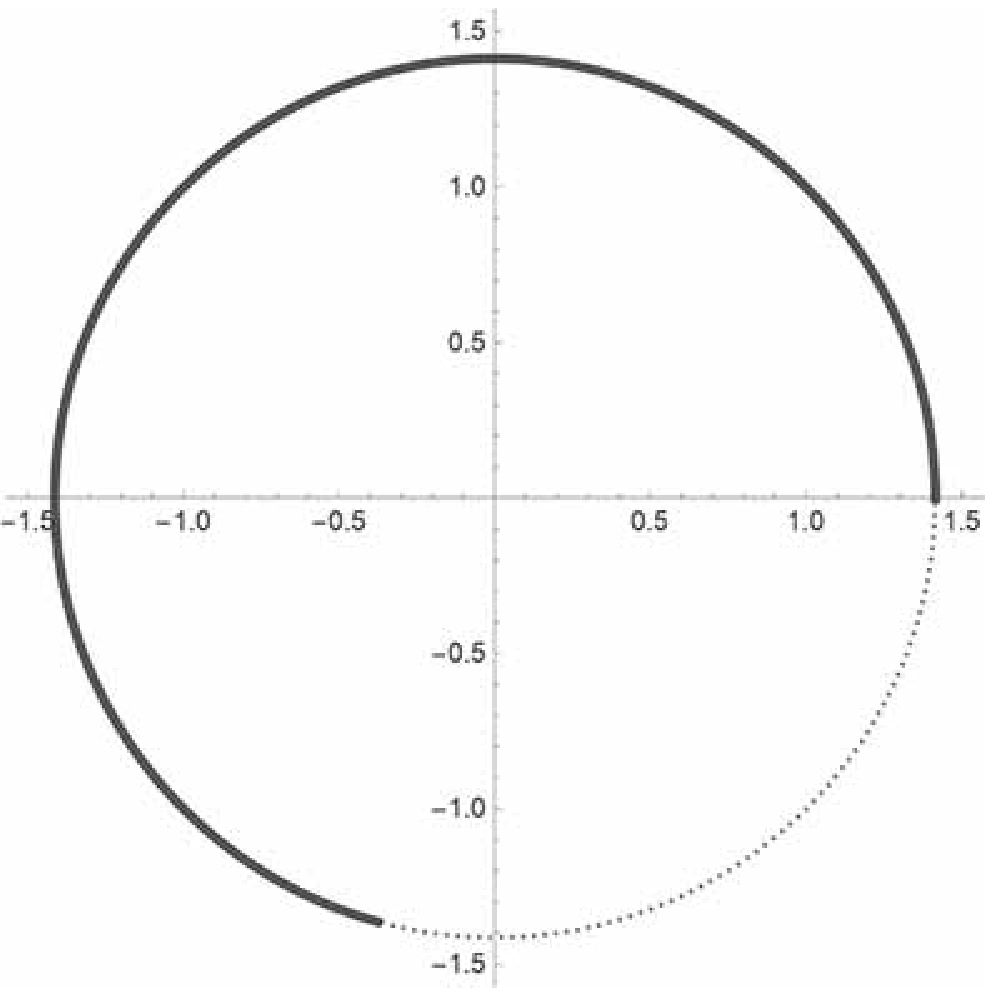} & 
       \includegraphics[width=3.3cm]{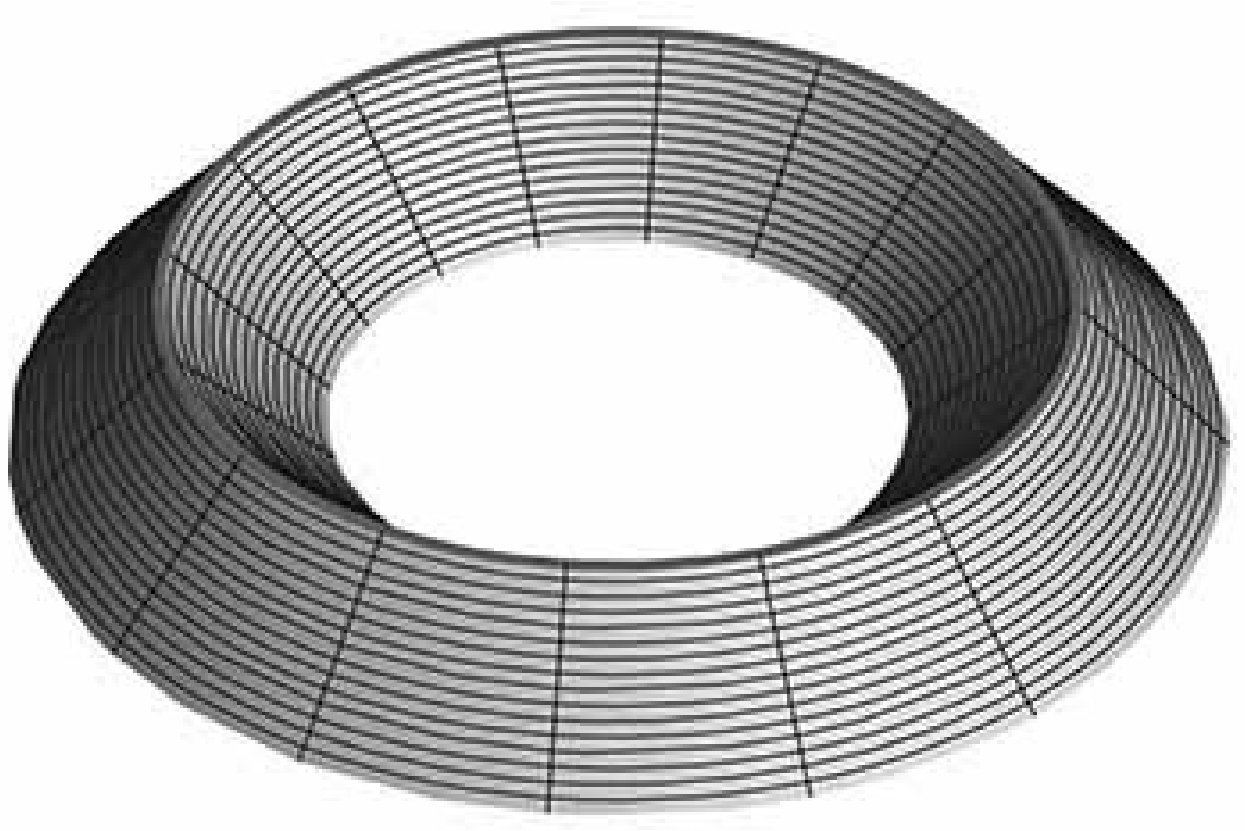} 
  \end{tabular}
 \end{center}
\caption{ 
A crease pattern (left) corresponding to a
curved folding along a circle (right).
}%
\label{CP}
\end{figure}

We next consider the case that \begin{itemize}
\item $C$ is a knot (i.e.~a simple closed curve)
 of length $l(>0)$ 
in $\R^3$ giving a crease of a curved 
folding $P\in \mc P_*(\Gamma,|C|)$, and 
\item $\Gamma$ is a curve of length $l$ embedded in $\R^2$ as a crease pattern. 
\end{itemize}
Even when $C$ is closed, $\Gamma$ may not be a closed curve in general.
In fact, if we consider the curved folding along the unit circle
whose first angular function 
(see Section 1 for the definition)
is $\pi/4$ as in Figure \ref{CP},
then its crease pattern is the sector of the
circle of radius $\sqrt{2}$ whose length is $2\pi$.

We return to the general setting.
Let $\gamma(s)$ be an arc-length 
parametrization of $\Gamma$.
If $C$ is a closed curve of length $l$,
then the curvature function $\mu(s)$ of $\gamma(s)$ 
can be extended as an $l$-periodic function $\tilde \mu(s)$ $(s\in \R)$.
A curved folding $P\in \mc P_*(C,\Gamma)$
consists of a union of two strips along $C$
whose geodesic curvature functions coincide with $\mu(s)$.

\medskip
\noindent
{\bf Theorem C.}
{\it Let $\mb c:\R\to \R^3$ and $\tilde \gamma:\R \to \R^2$ 
be regular curves parametrized by arc-length such that 
\begin{enumerate}
\item $\mb c(s)=\mb c(s+l)$,
\item the curvature function $\tilde \mu:\R\to \R$ of $\tilde \gamma$
induces
a function $\mu:\R/l\Z\to \R$ 
 defined on the one dimensional torus $\R/l\Z$
satisfying
$$
0<\max_{w\in \R/l\Z} \mu(w)<\kappa(s) \qquad (s\in [0,l)),
$$  
where $\kappa(s)$ is the curvature function of $\mb c(s)$.
\end{enumerate}
We set $C:=\mb c([0,l])$ and $\Gamma:=\gamma([0,l])$.
Then there exist four continuous families 
$
\{P^i_{\mb x}\}_{\mb x\in C}
$
$(i=1,2,3,4)$ 
of curved foldings in $\mc P_*(\Gamma,|C|)$
satisfying the following properties:
\begin{enumerate}
\item[(a)]
The set $\mc P_*(\Gamma,|C|)$ coincides with 
$
\displaystyle\bigcup_{\mb x\in C} \{P^1_{\mb x},\,\, P^2_{\mb x},
\,\,P^3_{\mb x},\,\,P^4_{\mb x}\}.
$
\item[(b)] Suppose that $C$ is not a circle and $\Gamma$
is not a subset of a circle. 
Then, for each $P^i_{\mb x}$ $(i\in \{1,2,3,4\},\,\, \mb x\in C)$, 
its congruence class
$$
\Lambda^i_{\mb x}:=\{Q\in \mc P_*(\Gamma,|C|)\,;\, 
\mbox{$Q$ is congruent to $P^i_{\mb x}$}\}
$$
is finite.  In particular, quotient of $\mc P_*(\Gamma,|C|)$ by
congruence relation
$\mc P_*(\Gamma,|C|)$ 
contains uncountably many curved foldings which 
are not congruent to each other.  
\item[(c)] Suppose that
$C$ and $\mu$ have no symmetries $($cf. Definition \ref{ft-sym2}$)$.
Then for each $i\in \{1,2,3,4\}$ and $\mb x\in C$, the set 
$\Lambda^i_{\mb x}$ consists of a single element,
that is, any two curved foldings in 
$\mc P_*(\Gamma,|C|)$ are mutually non-congruent. 
\end{enumerate}}

In Theorem C, we do not need to assume that $\Gamma$ is a
closed curve in $\R^2$. However,
the most interesting case is that $C$ and $\Gamma$ are both closed: 
At the end of Section~5, we  give a concretely
example of a $\mc P_*(\Gamma,|C|)$ containing 
uncountably many congruence classes.

Theorems A, B and C can be considered as the analogues 
for cuspidal edges along a space curve
given in \cite[Theorem III and Theorem IV]{HNSUY} 
and \cite[Theorem 1.8]{HNSUY2}, respectively.
However, even if $\Gamma$ and $C$ admit 
real analytic parametrizations,  
Theorems A, B and C do not directly follow 
from the corresponding assertions for cuspidal edges.

\medskip
\section{Preliminaries}
We let $C$ be the image of an embedded space curve whose 
length is $l$. When $C$ is a non-closed curve, 
it has a parametrization $\mb c:\mathbb I_l\to \R^3$
with arc-length parameter, where $\mathbb I_l:=[-l/2,l/2]$.
On the other hand, if $C$ is closed (i.e.~a knot), 
then it can be parametrized by a curve
$\mb c:\mathbb T^1_l\to \R^3$ with arc-length parameter,
where $\mathbb T^1_l:=\R/l\Z$.
Since we treat the bounded closed interval $\mathbb I_l$ 
and the one dimensional torus $\mathbb T^1_l$
at the same time, we set
$$
J:=\mathbb I_l \quad \text{or}\quad \mathbb T^1_l.
$$
As explained in the introduction, $C$ 
has the orientation induced by the parametrization $\mb c$. 
We let $-C$ be the curve $C$ whose orientation 
is reversed, and $|C|$ 
denotes the curve $C$ ignoring its orientation.
We consider special strips along $C$, which are
\lq\lq developable strips'' along $C$. Let $J_0$ be a set which 
is homeomorphic to $J$.

\begin{Def}
A {\it developable strip} along $C$ is a $C^\infty$-embedding 
$f:U\to \R^3$ defined on a tubular neighborhood $U$ 
of $J_0\times \{0\}$ in $J_0\times \R$ such that 
\begin{itemize}
\item $J_0 \ni u\mapsto f(u,0)\in \R^3$ parametrizes $C$,
\item there exists a unit vector field $\xi_f(u)$  
of $f$ along $C$ $($called a {\it ruling vector field}$)$
such that $f$ can be expressed as
$$
f(u,v)=f(u,0)+v \xi_f(u)\qquad ((u,v)\in U),\,\, \text{and}
$$
\item the Gaussian curvature of $f$ vanishes on $U$ identically.
\end{itemize}
\end{Def}

The developable strip $f$ represents a map germ along $C$.
We identify this induced map germ with $f$ itself if it creates no confusion.
Hereafter, we assume that the curvature function of $C$
never vanishes, and we denote by $\mb e(u)$, 
$\mb n(u)$ and $\mb b(u)$ the unit tangent, unit principal normal and unit bi-normal vector fields associated with the 
parametrization 
\begin{equation}\label{eq:Cf}
\mb c_f:J_0\ni u\mapsto f(u,0)\in \R^3
\end{equation}
of $C$, respectively. With these notations,  we can express $\xi_f$ as
\begin{equation}\label{eq:f-1}
\xi_f(u)=\cos \beta_f(u)\mb e(u)+ \sin \beta_f(u)
\Big(
\cos \alpha_f(u)\mb n(u)+\sin \alpha_f(u)\mb b(u)
\Big).
\end{equation}
This $\alpha_f:J_0\to \R$ is called the 
{\it first angular function},
and $\beta_f:J_0\to \R$ is 
called the {\it second angular function} of $f$.

In this paper, we consider the developable strips satisfying
\begin{equation}\label{eq:alpha_f0}
0<|\cos \alpha_f (u)|<1 \qquad (u\in J_0).
\end{equation}
We denote by $\mc D(C)$ the set of developable strip germs along $C$ satisfying \eqref{eq:alpha_f0}. Moreover, $f\in \mc D(C)$ is said to be
 {\it admissible} if it satisfies the following stronger 
condition
\begin{equation}\label{eq:alpha_f2}
0<|\kappa_f(u)
\cos \alpha_f(u)|<\min_{w\in J_0} \kappa_f(w)
\qquad (u\in J_0),
\end{equation}
which corresponds to the condition (iv${}'$) 
in the introduction,
where $\kappa_f$ is the curvature function of 
$\mb c_f$ (cf. \eqref{eq:Cf}).
Let ${\mc D}_*(C)$ be the set of admissible 
developable strip germs along $C$. By definition,
$$
\mc D_{*}(C)\subset \mc D(C)
$$
holds. We set
$$
{\mc D}(|C|):={\mc D}(C)\cup {\mc D}(-C),\qquad
{\mc D}_*(|C|):={\mc D}_*(C)\cup {\mc D}_*(-C).
$$
By \eqref{eq:alpha_f0},
we can choose the first angular function $\alpha_f$ so that 
\begin{equation}\label{eq:alpha_f}
0<|\alpha_f(u)|<\frac{\pi}{2} \qquad (u\in J_0).
\end{equation}
The Gaussian curvature of $f$ vanishes identically if and only if
$$
\det(f_u(u,0),\xi_f(u),\xi'_f(u))=0 \qquad
\left(\xi'_f(u):=\frac{d \xi_f(u)}{du}\right),
$$
which is equivalent to the formula
\begin{equation}\label{eq:beta_f}
\cot \beta_f(u)=\frac{\alpha'_f(u)+|\mb c'_f(u)|
\tau_f(u)}{|\mb c'_f(u)|\kappa_f(u)\sin \alpha_f(u)},
\end{equation}
where $\tau_f(u)$ is the torsion function of $\mb c_f(u)$.
In particular, we may assume that \begin{equation}\label{eq:beta_f2}
0<\beta_f(u)<\pi \qquad (u\in J_0).
\end{equation}
\underline{Throughout this paper, we assume \eqref{eq:alpha_f}
and \eqref{eq:beta_f2} for $f\in \mc D(C)$.}
In particular,  $\xi_f(u)$ satisfies
\begin{equation}\label{eq:xi-n}
\xi_f(u)\cdot \mb n(u)>0 \qquad (u\in J_0).
\end{equation}
Such a $\xi_f$ is called the {\it normalized ruling vector field} of $f$.
Then
$$
\mb N_f(u):=\cos \alpha_f(u)\mb n(u)+\sin \alpha_f(u)\mb b(u)
$$
gives the unit co-normal vector field of $f$ along $C$
satisfying $\mb N_f\cdot \xi_f>0$. We set
\begin{equation}\label{eq:mu-f}
\mu_f(u):=\frac{\mb c''_f(u)\cdot \mb N_f(u)}{|\mb c'_f(u)|^2}
=\kappa_f(u)\cos \alpha_f(u),
\end{equation}
which is a positive-valued function giving the geodesic curvature
of $C$ as a curve on the surface $f$. 
We call $\mu_f$ the {\it geodesic curvature function} 
of $f$ along $C$. Since $\mu_f(u)>0$, \eqref{eq:alpha_f} and
\eqref{eq:alpha_f2} reduce to the conditions
\begin{equation}\label{eq:alpha_f20}
(0<)\mu_f(u)< \kappa_f(u)\qquad (u\in J_0),
\end{equation}
and  
\begin{equation}\label{eq:alpha_f2B}
(0<)\mu_f(u)<\min_{w\in J_0} \kappa_f(w)\qquad (u\in J_0),
\end{equation}
respectively. Let $l$ be the total arc-length of $C$. 
Then the parameter $u$ of $\mb c_f$ can be expressed as 
$u=u(s)$ ($s\in J$) so that $\mb c(s):=\mb c_f(u(s))$
 ($s\in J$)
gives the arc-length parametrization of $C$.
In this situation, the function defined by
$$
\hat \mu_f(s):=\mu_f(t(s))\qquad (s\in J)
$$
is called the {\it normalized geodesic curvature function} of $f$.
We now fix a point  
\begin{equation}\label{eq:Bpt}
{\mb x}_0 \in C,
\end{equation}
which is the midpoint of $C$ if
$J=\mathbb I_l$ and is
an arbitrarily chosen point if $J=\mathbb T^1_l$.

\begin{Def}\label{def:NM}
Let $f\in \mc D(C)$. 
We call $f(s,v)$ a {\it normal form} of a developable strip
if it is defined on
a tubular neighborhood of $J\times \{0\}$
in $\R^2$
and 
$$
J\ni s\mapsto f(s,0)\in \R^3
$$
gives an arc-length parametrization of $C$.
Since we will use $s$ to denote the arc-length parameter of $C$,
we use the parametrization $f(s,v)$ when $f$ is a normal form.
(We shall denote such developable strips using capital letters
to emphasize that they are written in normal forms.)
\end{Def}

Suppose that $f(s,v)$ is a normal form.
As seen in \cite{Ori}, the restriction $H(s)$
of the mean curvature function of $f(s,v)$ to 
the curve $\mb c(s)$  ($s\in J$) satisfies
\begin{equation}\label{eq:H}
|H(s)|=\frac{\kappa(s)^2\sin^2 \alpha_f(s)
+(\alpha'_f(s)+\tau(s))^2}{2\kappa(s)|\sin \alpha_f(s)|},
\end{equation}
where $\kappa(s)$ and $\tau(s)$ are the curvature 
and torsion function of $\mb c(s)$.
By \eqref{eq:alpha_f}, the right-hand side of \eqref{eq:H}
never vanishes. In particular, $f$ has no umbilics and the ruling 
direction $\xi_f(s):=f_v(s,0)$ along $C$
points in the (uniquely determined) asymptotic direction of $f$.
Hence, the germ of the normal form of $f$ is determined by 
the base point $\mb x_0\in C$ and the first angular function $\alpha_f$.

From now on, we again assume that $u$ is a general parameter, that is,
it may not be an arc-length parameter of $C$, in general.
Let $J_0$ be a set which is homeomorphic to $J$.
We let 
\begin{equation}\label{def:1}
C^\infty_{\pi/2}(J_0)
\end{equation}
be the set of $C^\infty$-functions defined on $J_0$ 
whose images lie in the set 
$
(-\pi/2,\pi/2)\setminus\{0\}.
$
Then the first angular function $\alpha_f(u)$ of $f(u,v)$
belongs to this class $C^\infty_{\pi/2}(J_0)$.

\begin{Remark}\label{rmk:c-sharp}
When $J_0=[b,c]$ ($b<c$), we set 
$\mb c^\sharp(u):=\mb c_f(b+c-u)$,
which has the same image as $\mb c_f(u)$ (cf. \eqref{eq:Cf})
but has the opposite orientation.
Then
\begin{equation}\label{eq:s2}
\mb e^\sharp(u):=-\mb e(b+c-u),\quad
\mb n^\sharp(u):=\mb n(b+c-u),\quad
\mb b^\sharp(u):=-\mb b(b+c-u)
\end{equation}
are the unit velocity vector, the unit principal normal vector and
the unit bi-normal vector of the curve $\mb c^\sharp(u)$, respectively.
If we denote by $\tau_f(u)$ the torsion function of $\mb c_f(u)$,
then  
\begin{equation}\label{eq:s1}
\kappa^\sharp(u):=\kappa_f(b+c-u),
\qquad \tau^\sharp(u):=\tau_f(b+c-u)
\end{equation}
coincide with the curvature and torsion functions of $\mb c^\sharp(u)$, 
respectively. For each $f\in \mc D(C)$, we set 
\begin{equation}
f^\sharp(u,v):=f(b+c-u,v),
\end{equation}
and call this the {\it reverse} of $f$.
By definition, $f^\sharp$ has the same image as $f$, and
the involution
$
{\mc D}(|C|)\ni f \mapsto f^\sharp\in {\mc D}(|C|)
$
is canonically induced. By \eqref{eq:s2}, 
$
\xi^\sharp_f(u):=\xi_f(b+c-u)
$
gives the normalized ruling vector field of $f^\sharp$
if this is so of  $\xi_f(u)$ for $f$.
Then the first and second angular functions 
$\alpha^\sharp,\,\, \beta^\sharp$
of $f^\sharp$ satisfy
\begin{equation}\label{eq:alpha-s}
\alpha^\sharp(u)=-\alpha_f(b+c-u),\qquad 
\beta^\sharp(u)=\pi-\beta_f(b+c-u),
\end{equation}
respectively.
\end{Remark}

\begin{Def}\label{def:C}
Let $J_i$ ($i=1,2$) be two sets which are homeomorphic to $J$, 
and let $f_i:J_i\times (-\epsilon_i,\epsilon_i)
\to \R^3$ ($i=1,2$)
be two developable strips along $C$, where
$\epsilon_i>0$. 
Then $f_2$ is said to be {\it image equivalent}
(resp.  {\it right equivalent})
to $f_1$ if there exists a positive 
number $\delta(<\min(\epsilon_1,\epsilon_2))$
such that $f_1(J_1 \times (-\delta,\delta))$
coincides with $f_2(J_2 \times (-\delta,\delta))$
(resp. $f_2$ coincides with $f_1\circ \phi$ 
for a diffeomorphism
$
\phi:J_1\times (-\delta,\delta)
\to J_2\times (-\delta,\delta)
$).
\end{Def}

Recall that $s\mapsto \mb c(s)$ ($s\in J$)
gives an arc-length parametrization of $C$ such that $\mb c(0)={\mb x}_0$. The following assertion holds.

\begin{Proposition}\label{prop:U}
Let $F,\, G\in \mc D(|C|)$ be normal forms\footnote{
From now on, we use the capital letters
$F,\, G$ to express the normal forms of
developable strips, and use lower-case
letters $f,\, g$ to express
general developable strips.} $($cf. Definition \ref{def:NM}$)$
satisfying $F(0,0)=G(0,0)={\mb x}_0$.
Then the following two assertions are equivalent:
\begin{enumerate}
\item $F=G$ or $F=G^\sharp$,
\item $F$ is image equivalent to $G$.
\end{enumerate}
In particular, for each $\alpha\in C^\infty_{\pi/2}(J)$, 
there exists a unique normal form 
\begin{equation}\label{eq:FA}
F^\alpha\in \mc D(C)
\end{equation}
satisfying $F^\alpha(0,0)={\mb x}_0$ 
$($cf. \eqref{eq:Bpt}$)$
whose first 
angular function is $\alpha$.
\end{Proposition}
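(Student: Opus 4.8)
The plan is to prove the two implications separately and then deduce the existence and uniqueness of $F^\alpha$ as a byproduct of the argument. The implication $(1)\Rightarrow(2)$ is immediate: if $F=G$ there is nothing to prove, while if $F=G^\sharp$ then, since the reverse $G^\sharp$ has by construction the same image as $G$ (cf. Remark \ref{rmk:c-sharp}), $F$ and $G$ share the same image germ along $C$ and are therefore image equivalent in the sense of Definition \ref{def:C}.

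For the converse $(2)\Rightarrow(1)$, I would first use that both $F$ and $G$ are normal forms with $F(0,0)=G(0,0)=\mb x_0$. Since $s\mapsto F(s,0)$ and $s\mapsto G(s,0)$ are both arc-length parametrizations of $|C|$ passing through $\mb x_0$ at $s=0$, they can differ only by the orientation, so $G(s,0)=\mb c(\epsilon s)$ for some $\epsilon\in\{+1,-1\}$ (with $\epsilon=+1$ exactly when $G\in\mc D(C)$). The geometric heart of the argument is that the ruling field is intrinsic to the image: by \eqref{eq:H} and the remark following it, a normal form has no umbilics and its mean curvature never vanishes, so at each point the (flat) surface has a uniquely determined asymptotic direction, along which the normalized ruling vector field points. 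Hence if the image germs of $F$ and $G$ coincide, their asymptotic lines through each point of $C$ must coincide, which forces $\xi_F(s)=\pm\,\xi_G(\epsilon s)$ for all $s$.

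It then remains to pin down the signs and the value of $\epsilon$. When $\epsilon=+1$, the curves $F(\cdot,0)$ and $G(\cdot,0)$ carry the same Frenet frame, in particular the same principal normal $\mb n(s)$; the normalization $\xi_F\cdot\mb n>0$ and $\xi_G\cdot\mb n>0$ of \eqref{eq:xi-n} then selects the $+$ sign, giving $\xi_F=\xi_G$ and hence $F=G$, because a normal form is $\mb c(s)+v\,\xi(s)$. When $\epsilon=-1$, I would replace $G$ by its reverse $G^\sharp(s,v)=G(-s,v)$, which is again a normal form through $\mb x_0$, lies in $\mc D(C)$, and has the same image as $G$ (hence as $F$); applying the case $\epsilon=+1$ to the pair $(F,G^\sharp)$ yields $F=G^\sharp$. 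This proves $(2)\Rightarrow(1)$. I expect this sign-and-orientation bookkeeping, together with the clean statement that the image determines the asymptotic (ruling) field, to be the main obstacle, since one must verify that reversing the orientation of $C$ flips $\mb b$ (and hence the admissible sign of $\sin\alpha$) in exactly the way encoded by \eqref{eq:alpha-s}, so that no spurious solution beyond $F=G$ and $F=G^\sharp$ is lost or gained.

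Finally, for the "in particular" statement I would argue directly. Given $\alpha\in C^\infty_{\pi/2}(J)$, set $\mb N(s):=\cos\alpha(s)\,\mb n(s)+\sin\alpha(s)\,\mb b(s)$; since $\kappa(s)>0$ and $\sin\alpha(s)\ne 0$, the right-hand side of \eqref{eq:beta_f} is a well-defined real number, so there is a unique $\beta(s)\in(0,\pi)$ with the prescribed cotangent, and I define $F^\alpha(s,v):=\mb c(s)+v\,\bigl(\cos\beta(s)\,\mb e(s)+\sin\beta(s)\,\mb N(s)\bigr)$. By construction $F^\alpha$ is a normal form whose ruling field satisfies \eqref{eq:xi-n}, whose Gaussian curvature vanishes because $\beta$ solves \eqref{eq:beta_f}, and whose first angular function is $\alpha$; condition \eqref{eq:alpha_f0} holds because $0<|\alpha|<\pi/2$, so $F^\alpha\in\mc D(C)$. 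Uniqueness is then forced: any normal form in $\mc D(C)$ with base point $\mb x_0$ and first angular function $\alpha$ must have the same co-normal $\mb N$ and, by \eqref{eq:beta_f} together with \eqref{eq:beta_f2}, the same second angular function $\beta$, hence the same ruling field, and therefore equals $F^\alpha$.
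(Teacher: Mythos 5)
Your proof is correct and takes essentially the same route as the paper: the paper's (much terser) proof reduces to $F,\,G\in \mc D(C)$ by replacing $G$ with $G^\sharp$ and then invokes the fact, established in the text following \eqref{eq:H}, that a normal form is determined by its base point, the arc-length parametrization of $C$, and its first angular function --- which is precisely the asymptotic-direction argument you spell out in detail. Your explicit construction of $F^\alpha$ via \eqref{eq:beta_f} simply fills in the existence details that the paper leaves implicit.
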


\begin{proof} Replacing $G$ by $G^\sharp$, 
we may assume that $F,\, G\in \mc D(C)$. (In fact, 
$G^\sharp$ is a normal form if so is $G$.)
It is obvious that (1) implies (2). 
So it is sufficient to show the converse. 
Since each normal form of a developable strip
is determined by its base point, 
the arc-length parametrization of $C$
and the first angular function, (2) implies (1).
\end{proof}

We also prove the following assertion:

\begin{Proposition}\label{prop:fF}
For each $f\in \mc D(C)$, there exists a unique normal 
form $F\in \mc D(C)$
such that
\begin{enumerate}
\item $F(0,0)=\mb x_0$ $($if $J=\mathbb I_l$, this
holds automatically$)$, and
\item $F$ is right equivalent to $f$,
\end{enumerate}
where $\mb x_0\in C$  is the base point given in
\eqref{eq:Bpt}.
Moreover, 
$\hat \mu_f=\hat \mu_F=\mu_F$
hold.
\end{Proposition}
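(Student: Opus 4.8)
The plan is to obtain $F$ by reparametrizing $f$ with respect to arc length, to deduce uniqueness from Proposition~\ref{prop:U}, and then to verify the three equalities of geodesic curvature functions directly from \eqref{eq:mu-f}.

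\emph{Existence.} First I would use the standing hypothesis that the curvature of $C$ never vanishes, so that the base curve $\mb c_f(u)=f(u,0)$ is regular. Hence the arc-length function $s=s(u)$, normalized so that $s'(u)=|\mb c_f'(u)|>0$, is a well-defined increasing reparametrization; in the closed case $J=\mathbb T^1_l$ it descends to a diffeomorphism of the one-dimensional torus, and I fix its additive constant so that $\mb c_f(u(0))=\mb x_0$, whereas in the case $J=\mathbb I_l$ the symmetric normalization automatically sends $s=0$ to the arc-length midpoint $\mb x_0$ of \eqref{eq:Bpt}, so condition (1) holds for free. Setting $\phi(s,v):=(u(s),v)$ and $F:=f\circ\phi$, the map $\phi$ is a diffeomorphism, so $F$ is right equivalent to $f$ in the sense of Definition~\ref{def:C}, and $s\mapsto F(s,0)=\mb c_f(u(s))$ is the arc-length parametrization of $C$; thus $F$ is a normal form. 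Since $u'(s)>0$, the reparametrization preserves the orientation of the base curve and hence the Frenet frame $(\mb e,\mb n,\mb b)$, the normalized ruling vector field, and the first angular function, which transform by plain composition, $\alpha_F(s)=\alpha_f(u(s))$. In particular \eqref{eq:alpha_f0} continues to hold for $F$, so $F\in\mc D(C)$, as required.

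\emph{Uniqueness.} Suppose $F_1,F_2\in\mc D(C)$ are two normal forms satisfying (1) and (2). Being right equivalent to $f$, they are right equivalent to each other, and right equivalence trivially implies image equivalence (a diffeomorphic reparametrization of the source leaves the image unchanged). As $F_1(0,0)=F_2(0,0)=\mb x_0$, Proposition~\ref{prop:U} applies and yields either $F_1=F_2$ or $F_1=F_2^\sharp$. The decisive point is to rule out the second alternative: by \eqref{eq:s2} the reverse $F_2^\sharp$ traverses $C$ with the opposite orientation, so $F_2^\sharp\in\mc D(-C)$, while $F_1\in\mc D(C)$; since $C$ is embedded, $\mc D(C)$ and $\mc D(-C)$ are disjoint, and therefore $F_1=F_2$.

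\emph{The curvature identity.} Because curvature is a pointwise geometric invariant unchanged by an orientation-preserving reparametrization, $\kappa_f(u(s))$ equals the curvature $\kappa(s)$ of $C$ at $\mb c(s)$. Combining this with $\alpha_F(s)=\alpha_f(u(s))$ and \eqref{eq:mu-f} gives $\hat\mu_f(s)=\mu_f(u(s))=\kappa_f(u(s))\cos\alpha_f(u(s))=\kappa(s)\cos\alpha_F(s)=\mu_F(s)$; and since the defining parameter of the normal form $F$ is already arc length, its own reparametrization is the identity, whence $\hat\mu_F=\mu_F$. Hence $\hat\mu_f=\hat\mu_F=\mu_F$. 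The only genuinely delicate step in all of this is the orientation bookkeeping in the uniqueness argument---recognizing that right equivalence collapses onto the image equivalence governed by Proposition~\ref{prop:U}, and then excluding the reverse $F_2^\sharp$ by the orientation of the base curve rather than by computation; the reparametrization and the invariance of $\alpha$ and $\kappa$ are routine once one records that an orientation-preserving reparametrization fixes the Frenet frame.
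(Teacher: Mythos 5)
Your proposal is correct, and for the existence part it is the same idea as the paper's in substance (reparametrize so that the base curve has unit speed), but the technical route differs. The paper's proof invokes an external result ([UY-book, Lemma B.5.3]) to produce a curvature line coordinate system $(s,v)$ in which the base curve is a coordinate curve and the $v$-curves point along the rulings, and then normalizes $s$ to arc length and $v$ to unit length; you instead exploit the ruled expression $f(u,v)=f(u,0)+v\,\xi_f(u)$ with $|\xi_f|=1$ that is already built into the definition of a developable strip, so that only the $u$-variable needs reparametrizing via $\phi(s,v)=(u(s),v)$. Your route is more self-contained and makes it transparent that the ruled structure, the normalized ruling field, and condition \eqref{eq:alpha_f0} are preserved. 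Moreover, you supply two things the paper's proof leaves implicit: a uniqueness argument (right equivalence implies image equivalence, then Proposition \ref{prop:U} gives $F_1=F_2$ or $F_1=F_2^\sharp$, and the latter is excluded since $F_2^\sharp\in\mc D(-C)$ while $F_1\in\mc D(C)$, which are disjoint by orientation), and the verification of $\hat\mu_f=\hat\mu_F=\mu_F$ from the invariance of $\kappa$ and $\alpha$ under orientation-preserving reparametrization; both arguments are sound. One small inaccuracy: the regularity of $\mb c_f(u)=f(u,0)$ follows from $f$ being an embedding (hence an immersion, so $f_u(u,0)\ne 0$), not from the non-vanishing of the curvature of $C$ -- the curvature hypothesis is what guarantees the Frenet frame exists, which you do need for the angular functions; this slip does not affect the proof.
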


We call this $F$ the {\it normal form associated with $f$ of base point $\mb x_0$}.

\begin{proof}
Applying \cite[Lemma B.5.3]{UY-book},
we can take a curvature line coordinate system $(s,v)$ of $f$
such that $s \mapsto f(s,0)$ parametrizes $C$ and
$df(\partial/\partial v)$ points in the ruling direction.
In this situation, we may assume that 
$f(0,0)=\mb x_0$ and $s$ is the
arc-length parameter of $C$. We then adjust $v$ 
so that $|f_v(s,0)|=1$
for $s\in J$.
Since the image of each $v$-curve gives a straight line,
this parametrization $f(s,v)$ gives the normal form
associated to $f$.
\end{proof}

We next prepare the following two lemmas, 
which will be applied
in the later discussions.

\begin{Lemma}\label{lem:Tv}
Let $F_i\in \mc D(C)$ $(i=1,2)$
be normal forms
satisfying $F(0,0)=G(0,0)=\mb x_0$,
and let $\alpha_i\in C^\infty_{\pi/2}(J)$  
be its first angular functions.
If $\alpha_2-\alpha_1$ does not have any zeros on $J$,
then the ruling direction of $F_1$ $($cf. \eqref{eq:FA}$)$
is 
linearly independent of
that of $F_2$ at each point of $C$.
\end{Lemma}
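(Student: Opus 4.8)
The plan is to work entirely within the common Frenet frame along $C$. Since $F_1,F_2\in\mc D(C)$ are normal forms with $F_1(0,0)=F_2(0,0)=\mb x_0$, both restrict on $\{v=0\}$ to the arc-length parametrization of $C$ determined by the orientation of $C$ and the base point $\mb x_0$; hence they coincide there, $F_1(s,0)=F_2(s,0)=\mb c(s)$, and the two strips share one Frenet frame $(\mb e(s),\mb n(s),\mb b(s))$. By \eqref{eq:f-1}, the ruling direction of $F_i$ at $\mb c(s)$ is the unit vector
\[
\xi_{F_i}(s)=\cos\beta_i(s)\,\mb e(s)+\sin\beta_i(s)\,\mb N_i(s),\qquad
\mb N_i(s):=\cos\alpha_i(s)\,\mb n(s)+\sin\alpha_i(s)\,\mb b(s),
\]
where $\alpha_i,\beta_i$ are the first and second angular functions of $F_i$. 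Because $\xi_{F_1}(s)$ and $\xi_{F_2}(s)$ are both unit vectors, proving their linear independence is the same as excluding $\xi_{F_1}(s)=\varepsilon\,\xi_{F_2}(s)$ for $\varepsilon\in\{+1,-1\}$, and I would dispose of the two signs separately.

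Next I would use the normalization conventions \eqref{eq:alpha_f} and \eqref{eq:beta_f2}, which give $\cos\alpha_i(s)>0$ and $\sin\beta_i(s)>0$, so that \eqref{eq:xi-n} reads $\xi_{F_i}(s)\cdot\mb n(s)=\sin\beta_i(s)\cos\alpha_i(s)>0$. This immediately rules out $\varepsilon=-1$, since the $\mb n$-components of $\xi_{F_1}(s)$ and of $-\xi_{F_2}(s)$ then have opposite signs. For $\varepsilon=+1$, comparing the $\mb e$-components gives $\cos\beta_1(s)=\cos\beta_2(s)$, whence $\beta_1(s)=\beta_2(s)$ because $\beta_i(s)\in(0,\pi)$; therefore $\sin\beta_1(s)=\sin\beta_2(s)>0$, and the components in the $(\mb n,\mb b)$-plane force $\mb N_1(s)=\mb N_2(s)$, i.e. $\cos\alpha_1(s)=\cos\alpha_2(s)$ and $\sin\alpha_1(s)=\sin\alpha_2(s)$. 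As $\alpha_1(s),\alpha_2(s)\in(-\pi/2,\pi/2)$, this yields $\alpha_1(s)=\alpha_2(s)$, contradicting the hypothesis that $\alpha_2-\alpha_1$ has no zero on $J$. Hence $\xi_{F_1}(s)$ and $\xi_{F_2}(s)$ are linearly independent at every $\mb c(s)$, which is the assertion.

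There is no genuinely hard step here; the argument is a sign-chasing computation in the shared frame. The only points that require care — and where one could slip — are, first, checking that both normal forms really do induce the same arc-length parametrization, and hence the same frame, so that comparing the coefficients of $\xi_{F_1}$ and $\xi_{F_2}$ against $(\mb e,\mb n,\mb b)$ is legitimate; and second, disposing of the antiparallel case $\varepsilon=-1$, which in fact never occurs regardless of $\alpha_1,\alpha_2$, precisely because the conventions behind \eqref{eq:xi-n} pin down the sign of $\xi_{F_i}\cdot\mb n$. Once these conventions are invoked, the reduction of linear dependence to the scalar condition $\alpha_1(s)=\alpha_2(s)$ is automatic, and the no-zero hypothesis finishes the proof.
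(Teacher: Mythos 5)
Your proof is correct and follows essentially the same route as the paper: both work in the common Frenet frame, decompose the rulings via \eqref{eq:f-1}, and use the normalizations \eqref{eq:alpha_f} and \eqref{eq:beta_f2} to reduce linear dependence of $\xi_{F_1}(s),\xi_{F_2}(s)$ to the equality $\alpha_1(s)=\alpha_2(s)$, which the hypothesis excludes. The paper's version is terser — it notes that $\alpha_1(s)\ne\alpha_2(s)$ makes the co-normal vectors $\cos\alpha_i(s)\,\mb n(s)+\sin\alpha_i(s)\,\mb b(s)$ linearly independent and leaves implicit the lifting step (via $\sin\beta_i>0$) that your parallel/antiparallel case analysis spells out.
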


\begin{proof}
We let $\xi_i(s)$ ($i=1,2$) be the normalized unit 
ruling vector fields
 associated with $F^{\alpha_i}(s,v)$. 
Since $\alpha_2(s)\ne \alpha_1(s)$, two vectors 
$$
\cos \alpha_1(s) \mb n(s)+\sin \alpha_1(s) \mb b(s),\qquad
\cos \alpha_2(s) \mb n(s)+\sin \alpha_2(s) \mb b(s)
$$
in $\R^3$ are linearly independent for each $s\in J$. 
So we obtain the assertion. 
\end{proof}

\begin{Proposition}\label{L0}
Let $f\in \mc D(C)$.
If $T$ is a symmetry of $C$ $($cf. Definition \ref{def:S}$)$, then $T\circ f$ belongs to $\mc D(|C|)$.
Moreover, if $f$ is a normal form,
then so is $T\circ f$, and $dT(\xi_f)$ gives the
normalized ruling vector field of $T\circ f$.
Furthermore, the normalized geodesic curvature function
of $T\circ f$ coincides with that of $f$.
\end{Proposition}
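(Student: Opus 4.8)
The plan is to write the isometry as $T(\mb x)=A\mb x+\mb a$ with $A\in O(3)$ and $\mb a\in\R^3$, and to track how each datum of a developable strip transforms under $T$. First I would check that $T\circ f$ is again a developable strip: composing $f(u,v)=f(u,0)+v\,\xi_f(u)$ with $T$ gives
$$(T\circ f)(u,v)=T\big(f(u,0)\big)+v\,A\xi_f(u),$$
so $u\mapsto(T\circ f)(u,0)$ parametrizes $T(C)=C$, and $A\xi_f$ is a unit ruling vector field because $A$ is orthogonal. Since $T$ is an isometry of $\R^3$, it preserves the induced metric and hence the Gaussian curvature, so $T\circ f$ has vanishing Gaussian curvature. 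Thus $T\circ f$ is a developable strip along $C$, possibly with reversed orientation.

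The crux is the transformation of the Frenet apparatus. Setting $\tilde{\mb c}:=T\circ\mb c_f$ and $\varepsilon:=\det A\in\{\pm1\}$, a direct computation gives $\tilde{\mb e}=A\mb e$, $\tilde{\mb n}=A\mb n$, $\tilde{\mb b}=\varepsilon A\mb b$ and $\tilde\kappa=\kappa_f$. Substituting these into the expression \eqref{eq:f-1} for $A\xi_f$, I would read off that the second angular function is unchanged, $\tilde\beta=\beta_f$, while the first angular function of $T\circ f$ is $\tilde\alpha=\varepsilon\,\alpha_f$; in particular $|\cos\tilde\alpha|=|\cos\alpha_f|$, so \eqref{eq:alpha_f0} is inherited and the normalization conventions \eqref{eq:alpha_f}, \eqref{eq:beta_f2} persist. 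Hence $T\circ f\in\mc D(|C|)$. The sign bookkeeping for $\tilde{\mb b}$ and $\tilde\alpha$ when $A$ is orientation-reversing is the only delicate point, and it is exactly where I expect the main obstacle to lie; everything afterwards hinges on the two factors of $\varepsilon$ cancelling.

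The remaining assertions then follow quickly. Since $A\xi_f\cdot\tilde{\mb n}=A\xi_f\cdot A\mb n=\xi_f\cdot\mb n>0$ by orthogonality of $A$, the vector $dT(\xi_f)=A\xi_f$ satisfies \eqref{eq:xi-n} and is therefore the normalized ruling vector field of $T\circ f$. If $f$ is a normal form, then $s\mapsto T\big(f(s,0)\big)$ is again unit speed because $T$ preserves arc length, so $T\circ f$ is a normal form as well. Finally, using $\tilde\alpha=\varepsilon\alpha_f$ and $\tilde{\mb b}=\varepsilon A\mb b$, the two factors $\varepsilon$ cancel in the co-normal, giving $\tilde{\mb N}=\cos\tilde\alpha\,\tilde{\mb n}+\sin\tilde\alpha\,\tilde{\mb b}=A\mb N_f$. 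Combined with $\tilde{\mb c}''=A\mb c_f''$ and the orthogonality of $A$, formula \eqref{eq:mu-f} yields
$$\mu_{T\circ f}=\frac{\tilde{\mb c}''\cdot\tilde{\mb N}}{|\tilde{\mb c}'|^2}=\frac{A\mb c_f''\cdot A\mb N_f}{|A\mb c_f'|^2}=\frac{\mb c_f''\cdot\mb N_f}{|\mb c_f'|^2}=\mu_f,$$
and since $T$ preserves arc length the normalized geodesic curvature functions coincide as well.
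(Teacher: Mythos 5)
Your proposal is correct and takes essentially the same route as the paper's proof: both hinge on the fact that the inequality \eqref{eq:xi-n} is preserved, via $dT(\xi_f)\cdot dT(\mb n)=\xi_f\cdot \mb n>0$, which simultaneously identifies $dT(\xi_f)$ as the normalized ruling vector field of $T\circ f$ and pins down the geodesic curvature. The only difference is presentational: where the paper invokes the invariance of geodesic curvature under isometries up to a sign $\sigma$ and then uses \eqref{eq:xi-n2} to force $\sigma=1$, you carry out the $\det A$ bookkeeping explicitly (obtaining $\tilde\alpha=\varepsilon\alpha_f$, $\tilde\beta=\beta_f$, and the cancellation of the two factors of $\varepsilon$ in $\tilde{\mb N}=A\mb N_f$), which is precisely the computation underlying that sign-fixing step.
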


\begin{proof}
We denote by $F$ the normal form associated with $f$ 
of base point $\mb x_0$
(cf. Proposition~\ref{prop:fF}). 
It is sufficient to show the assertion holds for $F$.
Since the property that the geodesic curvature 
has no zeros is preserved by isometries of $\R^3$,
the first assertion is obtained.
Since $s\mapsto T\circ F(s,v)$ gives an 
arc-length parametrization of $|C|$, $T\circ F(s,v)$
is a normal form.  Since the principal normal vector field
is common in $C$ and $-C$ (cf. \eqref{prop:03a}), 
it can be easily seen that
$dT(\mb n(s))$ gives the principal 
normal vector field of $T\circ \mb c(s)$, and
\eqref{eq:xi-n} yields that
\begin{equation}\label{eq:xi-n2}
dT(\xi_F(s))\cdot dT(\mb n(s))
=\xi_F(s)\cdot \mb n(s)>0 
\end{equation}
along $C$, which implies the second assertion.
Since geodesic curvatures on surfaces 
are geometric invariants up to $\pm$-ambiguities, 
the geodesic curvature of $T\circ F$ 
coincides with $\sigma \mu_F$, where $\sigma\in \{1,-1\}$.
Moreover, \eqref{eq:xi-n2}
implies that $\sigma=1$, proving the last assertion.
\end{proof}

We set
\begin{equation}\label{eq:D}
\Omega^+_{\epsilon}:=J\times (0,\epsilon),
\quad \Omega^-_{\epsilon}=J\times (-\epsilon,0), 
\quad \Omega_{\epsilon}:=J\times (-\epsilon,\epsilon).
\end{equation}

\begin{Proposition}\label{fact:1} 
For $f,\, g\in \mc D(|C|)$,
the following five conditions are equivalent:
\begin{enumerate}
\item $f$ is right equivalent to $g$
as map germs,
\item $F=G$ or $F=G^\sharp$,
where $F$ and $G$ are the normal forms associated with $f$ and $g$
satisfying $F(0,0)=G(0,0)=\mb x_0$,
respectively,
\item $F(\Omega^+_{\epsilon})=G(\Omega^+_{\epsilon})$
for sufficiently small $\epsilon(>0)$,
\item $F(\Omega_{\epsilon})=G(\Omega_{\epsilon})$
for sufficiently small $\epsilon(>0)$,
\item  $f$ is image equivalent to $g$ as map germs. 
\end{enumerate}
\end{Proposition}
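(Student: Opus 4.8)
The plan is to use condition (2) as a hub and reduce every statement about the germs $f,g$ to the explicitly described normal forms $F,G$. By Proposition \ref{prop:fF}, $f$ is right equivalent to $F$ and $g$ is right equivalent to $G$, and right equivalence is an equivalence relation which clearly implies image equivalence, since two right equivalent germs share the same image germ along $C$. As image equivalence is also an equivalence relation, the relation (1) for the pair $(f,g)$ is equivalent to right equivalence of $(F,G)$, and the relation (5) for $(f,g)$ is equivalent to image equivalence of $(F,G)$. Moreover condition (4), namely $F(\Omega_\epsilon)=G(\Omega_\epsilon)$, is by Definition \ref{def:C} precisely the statement that $F$ is image equivalent to $G$. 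Hence Proposition \ref{prop:U} ties (2), (4) and (5) together in one stroke: each holds if and only if $F=G$ or $F=G^\sharp$.

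To finish the equivalence of (1) with (2) I would show that $F$ is right equivalent to $G$ if and only if $F=G$ or $F=G^\sharp$. The nontrivial direction follows from the chain ``right equivalent $\Rightarrow$ image equivalent $\Rightarrow$ ($F=G$ or $F=G^\sharp$)'' via Proposition \ref{prop:U}; conversely $F=G$ is trivial, and since the reverse $G^\sharp$ is obtained from $G$ by the reparametrization $(u,v)\mapsto(b+c-u,v)$, the equality $F=G^\sharp$ forces $F$ to be right equivalent to $G$. Combined with the facts that $f$ is right equivalent to $F$ and $g$ to $G$, this yields $(1)\Leftrightarrow(2)$.

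It remains to bring the one-sided condition (3) into the web, and this is where the real work lies. The implication $(2)\Rightarrow(3)$ is immediate: if $F=G$ there is nothing to prove, while if $F=G^\sharp$ then $G^\sharp(\Omega^+_\epsilon)=G(\Omega^+_\epsilon)$, because the reparametrization $u\mapsto b+c-u$ permutes $J$ while leaving the sign of $v$ unchanged. The converse $(3)\Rightarrow(2)$ is the main obstacle, since one can no longer invoke Proposition \ref{prop:U} directly: we are given only that the positive half-strips coincide as subsets of $\R^3$. The key is a rigidity of rulings. By \eqref{eq:H} the strips have no umbilics, so along $C$ the ruling direction is the uniquely determined asymptotic direction of the image surface; hence $F(\Omega^+_\epsilon)=G(\Omega^+_\epsilon)$ forces the ruling lines of $F$ and $G$ through each point of $C$ to coincide. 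Because both ruling vector fields are normalized so that $\xi\cdot\mb n>0$ by \eqref{eq:xi-n}, and the principal normal $\mb n$ is common to $C$ and $-C$ (cf. \eqref{eq:s2}), the ruling fields in fact agree as vectors at each point. Finally, since $C$ is embedded, the two arc-length parametrizations $s\mapsto F(s,0)$ and $s\mapsto G(s,0)$ emanating from the common base point $\mb x_0$ must coincide up to orientation; matching them together with the coincidence of ruling vectors gives $\alpha_F=\alpha_G$ in the orientation-preserving case and, in the reversed case, the relations \eqref{eq:alpha-s}, so that $F=G$ or $F=G^\sharp$. This closes the cycle and establishes the equivalence of all five conditions; I expect the asymptotic-direction rigidity underlying $(3)\Rightarrow(2)$ to be the step requiring the most care.
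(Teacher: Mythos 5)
Your proof is correct, but the way you handle the one genuinely hard point differs from the paper's. Both arguments share the same scaffolding: reduce $f,g$ to their normal forms via Proposition \ref{prop:fF}, and use Proposition \ref{prop:U} to convert image equivalence of normal forms into \lq\lq$F=G$ or $F=G^\sharp$\rq\rq; in particular your treatment of (1), (2), (4), (5) matches the paper's. The divergence is in how the one-sided condition (3) is brought back into the equivalence. The paper proves $(3)\Rightarrow(4)$ in one line: since $F$ and $G$ are ruled, they are real analytic in $v$, so coincidence of the images for $v>0$ propagates across $v=0$ to the full strips, after which Proposition \ref{prop:U} finishes the job. You instead prove $(3)\Rightarrow(2)$ directly, by observing that \eqref{eq:H} rules out umbilics, so the rulings are the unique asymptotic directions and hence are intrinsically determined by the common image; combined with the normalization \eqref{eq:xi-n}, the orientation-independence of $\mb n$ (cf. \eqref{eq:s2}), and the embeddedness of $C$, this forces $F=G$ or $F=G^\sharp$. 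Your route is more geometric and essentially re-derives the rigidity that underlies Proposition \ref{prop:U} from the bare image, whereas the paper's analyticity trick is shorter and delegates all uniqueness to that proposition. The price of your approach is a boundary subtlety you should make explicit if you write it up: $F(\Omega^+_\epsilon)$ does \emph{not} contain $C$, so the asymptotic-direction argument lives on the open half-strip; one must match the asymptotic leaves of the two parametrizations there (each leaf of $F$ is a leaf of $G$), pass to their endpoint limits on $C$ (using injectivity of the embeddings to exclude the case where a leaf's far endpoint lands on $C$), and only then conclude that the ruling vectors along $C$ agree. With that limiting argument spelled out, your proof is complete and at a level of rigor comparable to the paper's own.
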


\begin{proof}
By Proposition \ref{prop:fF},
 (1) implies (2), because $F(0,0)=G(0,0)=\mb x_0$. 
Obviously (2) implies (3).
Moreover, (3) implies (4),
because $F(s,v)$ and $G(s,v)$  are ruled strips 
and are real analytic with respect to the
parameter $v$.
On the other hand, it is obvious that (5) is
equivalent to (4). 
If (4) holds, then  
Proposition \ref{prop:U} yields
that $F=G$ or $F=G^\sharp$.
In particular, (1) is obtained.
\end{proof}

We next prove the following assertion,
which is a refinement of \cite[Lemma 1.2]{Ori}.

\begin{Proposition}\label{lem:key}
Let $F_i\in \mc D(C)$ $(i=1,2)$
be developable strips written in normal forms
satisfying $F(0,0)=G(0,0)=\mb x_0$.
If their normalized ruling vector fields
are linearly independent at each point of
$C$, then $F_1(\Omega_\epsilon)\cap F_2(\Omega_\epsilon)$ 
coincides with $C$
for sufficiently small $\epsilon(>0)$.
\end{Proposition}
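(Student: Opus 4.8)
The plan is to establish the nontrivial inclusion $F_1(\Omega_\epsilon)\cap F_2(\Omega_\epsilon)\subseteq C$; the reverse inclusion is immediate because $F_i(s,0)=\mb c(s)$ parametrizes $C$. Since each normal form is ruled over its arc-length base curve, I write $F_i(s,v)=\mb c(s)+v\,\xi_i(s)$ with $\xi_i:=\xi_{F_i}$, and I encode common points as zeros of
\[
\Phi(s_1,v_1,s_2,v_2):=\bigl(\mb c(s_1)+v_1\xi_1(s_1)\bigr)-\bigl(\mb c(s_2)+v_2\xi_2(s_2)\bigr).
\]
The diagonal $\Delta:=\{(s,0,s,0)\,;\,s\in J\}$ consists of zeros of $\Phi$ and maps onto $C$, so the goal is precisely to show that, for small $\epsilon$, every zero of $\Phi$ in $\Omega_\epsilon\times\Omega_\epsilon$ lies on $\Delta$. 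I would organize this into a global \emph{localization} step and a \emph{local transversality} step.

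First, the localization step rules out \lq\lq long-range\rq\rq\ intersections in which $s_1$ and $s_2$ are far apart but $\mb c(s_1),\mb c(s_2)$ happen to be close in $\R^3$ (relevant when $C$ is a knot that nearly returns to itself). Since $C$ is compact and embedded, the quantity $d_0:=\min\{|\mb c(s_1)-\mb c(s_2)|\,;\,d_J(s_1,s_2)\ge\delta_0\}$ is positive for each fixed $\delta_0>0$. If $\mb p=F_1(s_1,v_1)=F_2(s_2,v_2)$ with $|v_1|,|v_2|<\epsilon$, then $|\mb c(s_1)-\mb c(s_2)|<2\epsilon$, so choosing $\epsilon<d_0/2$ forces $d_J(s_1,s_2)<\delta_0$. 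Thus for $\epsilon$ small it suffices to analyze zeros of $\Phi$ with $s_1,s_2$ in a common small arc, i.e.\ near $\Delta$.

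Second, the local step is an application of the implicit/constant rank theorem. At a diagonal point $(s,0,s,0)$ the columns of $d\Phi$ are $\mb e(s),\ \xi_1(s),\ -\mb e(s),\ -\xi_2(s)$, so the image of $d\Phi$ is the subspace spanned by $\mb e(s),\xi_1(s),\xi_2(s)$. When this subspace is all of $\R^3$, the rank is $3$ and the kernel is the one-dimensional line $\R\cdot(1,0,1,0)=T\Delta$; hence $\Phi^{-1}(0)$ near $(s,0,s,0)$ is exactly $\Delta$, so locally the intersection is $C$. Continuity of $d\Phi$ together with compactness of $C$ then upgrades this to a uniform radius, which combined with the localization step yields a single $\epsilon$ that works. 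The rank is $3$ exactly when $\mb e(s),\xi_1(s),\xi_2(s)$ are independent, equivalently when the two tangent planes $\operatorname{span}(\mb e,\xi_1)=\operatorname{span}(\mb e,\mb N_1)$ and $\operatorname{span}(\mb e,\xi_2)=\operatorname{span}(\mb e,\mb N_2)$ are distinct, where $\mb N_i:=\cos\alpha_i\,\mb n+\sin\alpha_i\,\mb b$; since $\mb N_i\perp\mb e$, this holds iff $\mb N_1\ne\mb N_2$, i.e.\ iff $\alpha_1(s)\ne\alpha_2(s)$.

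The hard part is precisely the locus where $\alpha_1(s)=\alpha_2(s)$: there the tangent planes coincide, the rank of $d\Phi$ drops to $2$, and the first-order argument above collapses even though $\xi_1,\xi_2$ remain linearly independent. Note that linear independence of the rulings at such a point only requires $\beta_1(s)\ne\beta_2(s)$, which by \eqref{eq:beta_f} means $\alpha_1'(s)\ne\alpha_2'(s)$; hence any coincidence point is a \emph{simple} zero of $\alpha_1-\alpha_2$ and so is isolated. I expect this tangency locus to be the main obstacle: controlling the intersection there needs a finer (second-order) analysis of $\Phi$ transverse to $\Delta$, and I anticipate that the cleanest route is to invoke the setting in which $\alpha_1-\alpha_2$ has no zeros at all, so that by Lemma~\ref{lem:Tv} the tangent planes are everywhere distinct and the rank-$3$ condition holds uniformly; one should check whether the operative hypotheses of the later sections guarantee this, thereby removing the obstacle entirely.
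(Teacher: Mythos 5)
Your localization-plus-transversality argument is correct on the locus where $\alpha_1-\alpha_2$ has no zeros, and it is a genuinely different route from the paper's. The paper argues by contradiction: it takes intersection points $F_1(s_n,u_n)=F_2(t_n,v_n)$ off $C$ with $u_n,v_n\to 0$, shows $s_n\ne t_n$, passes to limits in the difference quotients $\bigl(\mb c(s_n)-\mb c(t_n)\bigr)/(s_n-t_n)$ to obtain a nontrivial relation $\mb e(s_\infty)=p_\infty\xi_1(s_\infty)-q_\infty\xi_2(s_\infty)$, and then pairs with $\mb n$ and $\mb b$ to get $\sin(A_1-A_2)=0$, where $A_i=\alpha_i(s_\infty)$. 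So the paper's compactness argument and your implicit-function-theorem argument reach the same algebraic endpoint (a nontrivial relation among $\mb e,\xi_1,\xi_2$ forces equal first angular functions); yours makes the local structure of the intersection explicit, while the paper's avoids any rank discussion, and your explicit localization step corresponds to the paper's implicit use of embeddedness of $C$ to force $s_\infty=t_\infty$.

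The obstacle you flag --- points where $\alpha_1(s)=\alpha_2(s)$ but $\beta_1(s)\ne\beta_2(s)$ --- is not a gap in your argument alone: it is exactly where the paper's own proof is incomplete. After deriving \eqref{eq:A1A2}, the paper asserts $0<|A_1-A_2|$ ``since $\xi_1(s_\infty)$ is linearly independent of $\xi_2(s_\infty)$''; but, as you note via \eqref{eq:beta_f}, ruling independence at such a point only requires $\alpha_1'(s_\infty)\ne\alpha_2'(s_\infty)$ and is fully compatible with $A_1=A_2$, so this inference does not follow from the stated hypothesis. Worse, the conclusion itself fails at a simple zero of $\alpha_2-\alpha_1$: let $\rho$ be the signed distance to the surface patch $F_1(W)$, $W$ a small neighborhood of $(s_0,0)$, and set $G(s,v):=\rho\bigl(F_2(s,v)\bigr)$. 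Since $G(s,0)=0$ we may write $G=vH$, and $H(s,0)=\nu_1(s)\cdot\xi_2(s)=\sin\beta_2(s)\,\sin\bigl(\alpha_2(s)-\alpha_1(s)\bigr)$, where $\nu_1$ is the unit normal of $F_1$ along $C$. A simple zero of $\alpha_2-\alpha_1$ at $s_0$ gives $H(s_0,0)=0$ and $\partial H/\partial s(s_0,0)\ne0$, so $H^{-1}(0)$ is a curve $s=\sigma(v)$ through $(s_0,0)$, and the points $F_2(\sigma(v),v)$ with small $v\ne0$ lie in $F_1(\Omega_\epsilon)\cap F_2(\Omega_\epsilon)\setminus C$ for every fixed $\epsilon$. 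Hence the proposition is valid only under the stronger hypothesis that $\alpha_1-\alpha_2$ is nowhere zero (equivalently, the tangent planes of $F_1$ and $F_2$ are distinct at every point of $C$), and that is precisely how it is invoked in the paper: Proposition~\ref{prop:220} uses $\alpha_{\check F}=-\alpha_F$ with $\alpha_F$ never zero, and Proposition~\ref{prop:star00} goes through Lemma~\ref{lem:Tv}. So your proposed resolution is not a convenient shortcut but the correct reading of the statement; what remains for you is only to say this affirmatively rather than as something ``to check.''
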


\begin{proof}
Without loss of generality,
we may assume that
$F_1$ and $F_2$ are defined on a 
tubular neighborhood of $J\times \{0\}$ in $J\times \R$
and $F_1(s,0)=F_2(s,0)$ holds for $s\in J$.
Suppose that the assertion fails.
Then there exist 
\begin{itemize}
\item two sequences $\{s_n\}_{n=1}^\infty$ and $\{t_n\}_{n=1}^\infty$ on 
$J$, and
\item two sequences $\{u_n\}_{n=1}^\infty$ and $\{v_n\}_{n=1}^\infty$ 
on $(-1/n,1/n)$
\end{itemize}
such that
\begin{equation}\label{eq:star0}
F_1(s_n,u_n)=F_2(t_n,v_n), \qquad (s_n,u_n)\ne (t_n,v_n).
\end{equation}
Here, $s_n\ne t_n$ holds. (In fact,
if not, then $F_1(s_n,u_n)=F_2(t_n,v_n)$ implies
$u_n \xi_1(s_n)=v_n \xi_2(s_n)$,
where $\xi_i$ ($i=1,2$) is
the normalized ruling vector field of $F_i$.
However, since $\{\xi_1(s_n),\xi_2(s_n)\}$
is linearly independent, the fact
$u_n \xi_1(s_n)=v_n \xi_2(s_n)$ implies $u_n=v_n=0$, which contradicts
the fact $(s_n,u_n)\ne (t_n,v_n)$.)

Since $J$ is compact, we may assume that
the limits
$\dy\lim_{n\to \infty}s_n=s_\infty$ and
$\dy\lim_{n\to \infty}t_n=t_\infty$
exist and $s_\infty,t_\infty\in J$.
Then by \eqref{eq:star0}, we have
$
F_1(s_\infty,0)=F_2(t_\infty,0).
$
Since $C$ has no self-intersections, we have
$
s_\infty=t_\infty
$.
By \eqref{eq:star0}, we can write
\begin{equation}\label{eq:DD}
\frac{\mb c(s_n)-\mb c(t_n)}{s_n-t_n}
=p_n\xi_1(s_n)-q_n \xi_2(t_n)
\qquad \Big(p_n:=\frac{-u_n}{s_n-t_n},\quad q_n:=\frac{-v_n}{s_n-t_n}\Big).
\end{equation}
If $n\to \infty$, then the left-hand side of
\eqref{eq:DD} converges to the vector
$\mb c'(s_\infty)(=\mb e(s_\infty))$.
Thus, we can conclude that the limits
$\dy\lim_{n\to \infty}p_n=p_\infty$ and 
$\dy\lim_{n\to \infty}q_n=q_\infty$
exist such that
\begin{equation}\label{eq:UV}
\mb e(s_\infty)=p_\infty\xi_1(s_\infty)
-q_\infty \xi_2(s_\infty).
\end{equation}
In particular, we have
\begin{equation}\label{eq:UV2}
(p_\infty,q_\infty)\ne (0,0).
\end{equation}
We set
$$
\mb e_\infty:=\mb e(s_\infty),\quad \mb n_\infty:=\mb n(s_\infty), \quad 
\mb b_\infty:=\mb b(s_\infty)
$$
and
$$
A_1:=\alpha_1(s_\infty),\quad
A_2:=\alpha_2(s_\infty),\quad
B_1:=\beta_1(s_\infty),\quad
B_2:=\beta_2(s_\infty),
$$
where $\alpha_i$ and $\beta_i$ ($i=1,2$)
are the first and second angular
functions of $F_i$, respectively.
Multiplying $\mb n_\infty$ and $\mb b_\infty$ to \eqref{eq:UV}
via inner products, we have
$$
p_\infty\cos A_1 -q_\infty\cos A_2 =0, \quad
p_\infty\sin A_1 -q_\infty\sin A_2 =0.
$$
By \eqref{eq:UV2}, we have
\begin{equation}\label{eq:A1A2}
0=\cos A_1 \sin A_2-\cos A_2\sin A_1=\sin(A_1-A_2).
\end{equation}
On the other hand, since $F_1,F_2\in \mc D(C)$
(cf. \eqref{eq:alpha_f0})
 and $\xi_1(s_\infty)$ is linearly
independent of $\xi_2(s_\infty)$,
we have
$$
0<|A_1-A_2|<|A_1|+|A_2|<\pi,
$$
which contradicts \eqref{eq:A1A2}.
We remark that a similar argument 
for developable surfaces is used 
in \cite[Section 5]{MU}.
\end{proof}

Using the same technique, we can  prove the
following assertion:

\begin{Proposition}\label{lem:pm}
Let $F,G\in \mc D(|C|)$ 
be developable strips written in normal forms.
Then $F(\Omega^+_\epsilon)$
does not meet
$G(\Omega^-_\epsilon)$ 
for sufficiently small $\epsilon(>0)$.
\end{Proposition}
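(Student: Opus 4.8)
The plan is to run the same compactness/blow-up argument as in Proposition \ref{lem:key}, but to extract the contradiction from the sign data recorded in the superscripts of $\Omega^{\pm}_\epsilon$ together with the normalization \eqref{eq:xi-n}, rather than from linear independence of the rulings. First I would normalize the set-up. Since the reverse $G^\sharp$ of Remark \ref{rmk:c-sharp} has the same image as $G$ and leaves the sign of the second coordinate unchanged, one has $G^\sharp(\Omega^-_\epsilon)=G(\Omega^-_\epsilon)$, and likewise $F^\sharp(\Omega^+_\epsilon)=F(\Omega^+_\epsilon)$. Replacing $F,G$ by their reverses if necessary, I may therefore assume $F,G\in\mc D(C)$, so that both are written over the common arc-length parametrization $\mb c$ with common Frenet frame $\mb e,\mb n,\mb b$, and both normalized ruling fields satisfy $\xi_F\cdot\mb n>0$ and $\xi_G\cdot\mb n>0$ by \eqref{eq:xi-n}. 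Arguing by contradiction, I assume that $F(\Omega^+_\epsilon)\cap G(\Omega^-_\epsilon)\ne\emptyset$ for all small $\epsilon$, producing (as in \eqref{eq:star0}) sequences $s_n,t_n\in J$ and $u_n\in(0,1/n)$, $v_n\in(-1/n,0)$ with $\mb c(s_n)+u_n\xi_F(s_n)=\mb c(t_n)+v_n\xi_G(t_n)$; the essential new feature is that $u_n>0$ while $v_n<0$.

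I would first dispose of the possibility $s_n=t_n$: there the relation collapses to $u_n\xi_F(s_n)=v_n\xi_G(s_n)$, and taking the inner product with $\mb n(s_n)$ gives $u_n\,\xi_F(s_n)\cdot\mb n(s_n)=v_n\,\xi_G(s_n)\cdot\mb n(s_n)$, whose left side is positive and right side negative by \eqref{eq:xi-n}, a contradiction. Hence $s_n\ne t_n$. By compactness of $J$ I pass to a subsequence with $s_n\to s_\infty$ and $t_n\to t_\infty$; letting $n\to\infty$ in the intersection relation gives $\mb c(s_\infty)=\mb c(t_\infty)$, whence $s_\infty=t_\infty$ because $C$ has no self-intersections.

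Now, instead of dividing by $s_n-t_n$ as in \eqref{eq:DD}, I would divide by $\delta_n:=|s_n-t_n|$, which keeps both coefficients positive: writing $a_n:=u_n/\delta_n>0$ and $b_n:=-v_n/\delta_n>0$ yields $\delta_n^{-1}\bigl(\mb c(t_n)-\mb c(s_n)\bigr)=a_n\xi_F(s_n)+b_n\xi_G(t_n)$. The left-hand side converges to $\pm\mb e(s_\infty)$, a unit vector orthogonal to $\mb n(s_\infty)$. Pairing with $\mb n(s_\infty)$ therefore sends the left-hand side to $0$, while the right-hand side equals $a_n\,\xi_F(s_n)\cdot\mb n(s_\infty)+b_n\,\xi_G(t_n)\cdot\mb n(s_\infty)\ge c\,(a_n+b_n)$ for some $c>0$ and all large $n$, again by \eqref{eq:xi-n} and continuity. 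Hence $a_n+b_n\to0$, so the right-hand side, and thus the left-hand side, tends to $0$ in norm, contradicting that the latter converges to a unit vector.

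The routine steps (subsequence extraction and the limit $s_\infty=t_\infty$) are identical to Proposition \ref{lem:key}, and I would cite that proof for them. The one genuinely new point — and the main obstacle — is that here the rulings of $F$ and $G$ are \emph{not} assumed linearly independent, so the coefficients can blow up and the clean limit relation \eqref{eq:DD}–\eqref{eq:UV} is unavailable precisely when $\xi_F(s_\infty)$ and $\xi_G(s_\infty)$ become parallel. Dividing by the absolute value $|s_n-t_n|$ rather than by $s_n-t_n$ circumvents this: it preserves the positivity coming from $u_n>0$ and $v_n<0$, and combined with $\xi_F\cdot\mb n>0$ and $\xi_G\cdot\mb n>0$ it forces a unit vector to vanish. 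Geometrically, this is the rigorous form of the statement that the $v>0$ sheet of $F$ and the $v<0$ sheet of $G$ lie on opposite sides of $C$ along the principal normal, a separation that is only visible at leading order once the two sheets are anchored at distinct points $s_n\ne t_n$ of $C$.
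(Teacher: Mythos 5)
Your proof is correct, and its skeleton---argument by contradiction via sequences $s_n,t_n,u_n,v_n$, elimination of the case $s_n=t_n$ by pairing with $\mb n$ and using \eqref{eq:xi-n}, compactness plus embeddedness of $C$ to force $s_\infty=t_\infty$, then a difference-quotient blow-up---is the same as the paper's. Where you genuinely depart is the endgame. The paper divides by $s_n-t_n$ as in \eqref{eq:DD2} and then asserts that the limits $p_\infty,q_\infty$ exist and satisfy \eqref{eq:UV2b}, deduces $(p_\infty,q_\infty)\ne(0,0)$, and reaches the contradiction through the sign analysis of \eqref{eq:pq22}. But that existence assertion is precisely the step which, in Proposition \ref{lem:key}, was underwritten by linear independence of the two ruling fields---a hypothesis absent here, exactly as you point out. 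Your arrangement closes this gap: dividing by $|s_n-t_n|$ keeps both coefficients $a_n,b_n$ positive, so pairing with $\mb n(s_\infty)$ bounds $a_n+b_n$ by a quantity tending to $0$ (no cancellation between the two terms is possible), hence the right-hand side tends to $0$ in norm, contradicting that the left-hand side has a unit-vector limit. In effect your argument shows that the paper's limits $p_\infty,q_\infty$ do exist but are both zero, which is incompatible with \eqref{eq:UV2b}; the paper's conclusion is untouched, but your version is the watertight one, and it makes transparent that the proposition rests only on the sign data $u_n>0>v_n$ together with the normalization \eqref{eq:xi-n}. The one loose end in your write-up is the claim that after passing to $\mc D(C)$ both strips are "written over the common arc-length parametrization": when $J=\mathbb T^1_l$ the two normal forms may still differ by a rotation of the parameter, which the paper removes by replacing $G(s,v)$ with $G(s+b,v)$; this is harmless for your argument since such a shift does not change the set $G(\Omega^-_\epsilon)$, but it should be said.
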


\begin{proof}
Replacing $G$ by $G^\sharp$, 
we may assume that $F,\, G\in \mc D(C)$.
Moreover, replacing $G(s,v)$  by
$G(s+b,v)$ for a suitable $b\in [0,l)$
if necessary,
we may assume that
$F,G$  are defined on a 
tubular neighborhood of $J\times \{0\}$ in $J\times \R$
and $F(s,0)=G(s,0)$ holds for $s\in J$.
We denote by $\xi_F$ and $\xi_G$
the normalized vector fields of $F$ and $G$, respectively.
Then we may assume that
$F$ and $G$ are defined on a tubular
neighborhood of $J\times \{0\}$ in $J\times \R$.
Suppose that the assertion fails.
Then there exist 
\begin{itemize}
\item two sequences $\{s_n\}_{n=1}^\infty$ and $\{t_n\}_{n=1}^\infty$ on 
$J$, and
\item two sequences $\{u_n\}_{n=1}^\infty$ and $\{v_n\}_{n=1}^\infty$ 
on $(0,1/n)$ and $(-1/n,0)$, respectively,
\end{itemize}
such that $F(s_n,u_n)=G(t_n,v_n)$
and $(s_n,u_n)\ne (t_n,v_n)$.
In this situation, we can show that $s_n\ne t_n$.
(In fact, if $s_n=t_n$, then
$F(s_n,u_n)=G(t_n,v_n)$ implies
$
u_n \xi_F(s_n)=v_n \xi_G(s_n)
$
and so we have
$$
u_n \xi_F(s_n)\cdot \mb n(s_n)=v_n \xi_G(s_n)\cdot \mb n(s_n).
$$
Here, 
$\xi_F(s_n) \cdot \mb n(s_n)$ and
$\xi_G(s_n)\cdot \mb n(s_n)$
are positive (cf. \eqref{eq:xi-n}).
So this contradicts the facts $u_n\in (0,1/n)$
and $v_n\in (-1/n,0)$.)

Since $J$ is compact, we may assume that
the limits $\dy\lim_{n\to \infty}s_n=s_\infty$
and $\dy\lim_{n\to \infty}t_n=t_\infty$
exist and $s_\infty,t_\infty\in J$.
Since $C$ has no self-intersections, we have
$
s_\infty=t_\infty.
$
By \eqref{eq:star0}, we can write
\begin{equation}\label{eq:DD2}
\frac{\mb c(s_n)-\mb c(t_n)}{s_n-t_n}
=p_n\xi_F(s_n)-q_n \xi_G(t_n)
\qquad \Big(p_n:=\frac{-u_n}{s_n-t_n},\quad q_n:=\frac{-v_n}{s_n-t_n}\Big).
\end{equation}
If $n\to \infty$, then the left-hand side of
\eqref{eq:DD} converges to the vector
$\mb c'(s_\infty)(=\mb e(s_\infty))$.
Thus, we can conclude that the limits
$\dy\lim_{n\to \infty}p_n=p_\infty$ and
$\dy\lim_{n\to \infty}q_n=q_\infty$
exist such that
\begin{equation}
\label{eq:UV2b}
\mb e(s_\infty)=p_\infty\xi_F(s_\infty)
-q_\infty \xi_G({s_\infty}).
\end{equation}
Since the left hand side does not vanish,
we have
\begin{equation}
\label{eq:UV2a}
(p_\infty,q_\infty)\ne (0,0).
\end{equation}
Taking the inner product of $\mb n(s_\infty)$ to
\eqref{eq:UV2b}, we have
\begin{equation}\label{eq:pq22}
0=p_\infty \, \xi_F(s_\infty)\cdot \mb n(s_\infty)
-q_\infty \,\xi_G(s_\infty)\cdot \mb n(s_\infty).
\end{equation}
Since $u_n>0$ and $v_n<0$, we have
$p_\infty q_\infty\le 0$.
Since 
$\xi_F(s_\infty) \cdot \mb n(s_\infty)$ and
$\xi_G(s_\infty)\cdot \mb n(s_\infty)$
are positive,
\eqref{eq:UV2a} implies that
the right hand side of
\eqref{eq:pq22} does not vanish, a contradiction.
\end{proof}

\section{The dual developable strips and curved foldings}

In this section, we explain
curved foldings using pairs of developable strips:

\begin{Definition}\label{ft-sym}
Let $J_i$ ($i=1,2$) be bounded closed intervals of $\R$.
Two functions $\mu_i:J_i\to \R$ ($i=1,2$) are said to be
{\it equi-affine equivalent} 
if there exists a diffeomorphism $\phi:J_1\to J_2$ 
of the form 
$$
\phi(u)=\sigma u+d \qquad (\sigma\in \{1,-1\},\,\, d\in \R)
$$
such that
$
\mu_2\circ \phi=\mu_1.
$
(By definition, if $\mu_2$ is equi-affine equivalent 
to $\mu_1$,
then the length of the interval $J_2$ must be 
equal to that of $J_1$.)

In the case of $J_1=J_2=[b,c]$ ($b<c$)
and $(\mu:=)\mu_1=\mu_2$, the map 
$\phi:J_1\to J_1$ is called a {\it symmetry} 
of $\mu$ if $\phi$ is not the identity. 
(There is at most one possibility for
 such a $\phi$, which must
have the expression $\phi(u)=b+c-u$.
So, if  such a $\phi$ exists, $\mu$ satisfies
$\mu(u)=\mu(b+c-u)$ on $[b,c]$.)
\end{Definition}

\begin{Lemma}\label{Aeq1}
For each $i\in \{1,2\}$, 
let $\gamma_i:\mathbb I_l\to \R^2$
$(l>0)$
be a regular curve
without self-intersections
parametrized by
arc-length. 
Suppose that the curvature functions of $\gamma_1$
and $\gamma_2$ are positive-valued.
Then the following two assertions
are equivalent:
\begin{enumerate}
\item the curvature function of $\gamma_2$
is equi-affine equivalent to that of $\gamma_1$,
\item there exists an isometry $T$
of $\R^2$ such that
$T(\gamma_1(\mathbb I_l))$ coincides with $\gamma_2(\mathbb I_l)$.
\end{enumerate}
\end{Lemma}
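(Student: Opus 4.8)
The plan is to deduce everything from the fundamental theorem of plane curves: a prescribed $C^\infty$ function on an interval is realized as the signed curvature of a unit-speed plane curve, and this curve is unique up to an orientation-preserving isometry of $\R^2$. Since $\mu_1,\mu_2$ are assumed positive, I read them as the signed curvatures with respect to the convention $\gamma_i''(u)=\mu_i(u)\,R_{\pi/2}\gamma_i'(u)$, where $R_{\pi/2}$ is rotation by $+\pi/2$. The effect of this normalization is that the orientation of the matching isometry and the direction of the arc-length parameter are forced to cooperate, which is what makes the two admissible equi-affine maps appear.

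For the implication $(2)\Rightarrow(1)$, I would start from an isometry $T$ with $T(\gamma_1(\mathbb{I}_l))=\gamma_2(\mathbb{I}_l)$ and observe that $T\circ\gamma_1$ is again a unit-speed parametrization of the arc $\gamma_2(\mathbb{I}_l)$. Because $\gamma_2$ is embedded, this arc is a simple curve of length exactly $l$, hence admits precisely two arc-length parametrizations defined on $\mathbb{I}_l$, namely $\gamma_2(u)$ and its reversal $\gamma_2(-u)$; therefore $T\circ\gamma_1(u)=\gamma_2(\sigma u)$ for a fixed $\sigma\in\{1,-1\}$. Now I track signed curvature through the two operations: $T\circ\gamma_1$ has signed curvature $\epsilon\,\mu_1(u)$, where $\epsilon=\pm1$ is the determinant of the linear part of $T$ (a direct isometry preserves signed curvature, a reflection negates it), while $u\mapsto\gamma_2(\sigma u)$ has signed curvature $\sigma\,\mu_2(\sigma u)$ (the reversal both negates the curvature and reflects its argument). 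Equating gives $\epsilon\,\mu_1(u)=\sigma\,\mu_2(\sigma u)$, and since $\mu_1,\mu_2>0$ this forces $\epsilon=\sigma$, whence $\mu_1(u)=\mu_2(\sigma u)$; that is $\mu_2\circ\phi=\mu_1$ for the equi-affine map $\phi(u)=\sigma u$.

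For $(1)\Rightarrow(2)$, I would first note that an equi-affine $\phi(u)=\sigma u+d$ which is a diffeomorphism of $J_1=\mathbb{I}_l$ onto $J_2=\mathbb{I}_l$ must send $0$ to $0$, so $d=0$ and $\mu_1(u)=\mu_2(\sigma u)$. If $\sigma=1$ then $\mu_1=\mu_2$, and the uniqueness half of the fundamental theorem produces an orientation-preserving $T$ with $T\circ\gamma_1=\gamma_2$, so the images coincide under $T$. If $\sigma=-1$, the reversal $\delta_2(u):=\gamma_2(-u)$ has signed curvature $-\mu_2(-u)=-\mu_1(u)$, which agrees with the signed curvature $-\mu_1$ of $S\circ\gamma_1$ for any reflection $S$; the fundamental theorem then supplies a direct isometry $R$ with $R\circ S\circ\gamma_1=\delta_2$, and $T:=R\circ S$ satisfies $T(\gamma_1(\mathbb{I}_l))=\delta_2(\mathbb{I}_l)=\gamma_2(\mathbb{I}_l)$.

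I expect the only genuine obstacle to be the sign bookkeeping in the second paragraph: keeping straight how the three sign-carrying data (the orientation of $T$, the direction of the arc-length parameter, and the positivity of both $\mu_1$ and $\mu_2$) interact, and checking that the four a priori combinations of $\epsilon$ and $\sigma$ collapse to exactly the two equi-affine maps permitted by Definition~\ref{ft-sym}. The remaining geometric ingredients—that a simple arc of length $l$ has exactly two unit-speed parametrizations on $\mathbb{I}_l$, and that unsigned curvature is a congruence invariant—are standard once the conventions are fixed.
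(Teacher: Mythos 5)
Your proof is correct and follows essentially the same route as the paper's: both directions rest on the fundamental theorem of plane curves, the fact that a simple arc of length $l$ has exactly two unit-speed parametrizations on $\mathbb I_l$, and the observation that positivity of $\mu_1,\mu_2$ forces the orientation of $T$ and the direction of the parameter to agree (the paper packages the $\sigma=-1$ case as $\gamma_1^\sharp(s)=S\circ\gamma_1(-s)$, which is the same reflection-plus-reversal bookkeeping you carry out). Your explicit remark that the equi-affine map must have $d=0$, and your avoidance of the paper's inessential (and in fact not quite correct) claim that $T$ is uniquely determined, are minor improvements in rigor but not a different argument.
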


\begin{proof}
We suppose (1).
We set
$\gamma_1^\sharp(s):=S\circ \gamma_1(-s)$
($s\in \mathbb I_l$),
where $S$ is the reflection with respect to a straight
line in $\R^2$. 
We denote by $\mu_i$  ($i=1,2$)
the  curvature function of $\gamma_i$.
If $\mu_2$ is equi-affine equivalent to $\mu_1$,
then $\mu_2$ coincides with 
the curvature function of $\gamma_1$ or $\gamma_1^\sharp$.
By the fundamental theorem of curves 
in the Euclidean plane (cf. \cite[Chapter 2]{UY-book}),
there exists an isometry $T$ in $\R^2$
such that 
$T\circ \gamma_1=\gamma_2$
or $T\circ \gamma_1^\sharp=\gamma_2$,
which implies (2).

Conversely, we suppose (2).
Since 
$\gamma_1$ and $\gamma_2$
have no self-intersections,
such an isometry $T$ is uniquely determined.
Since 
$\gamma_1$ and $\gamma_2$
are parametrized by arc-length,
either $T\circ \gamma_1(s)=\gamma_2(s)$
or $T\circ \gamma_1(s)=\gamma_2(-s)$ holds on $\mathbb I_l$.
Since the curvature functions of $\gamma_1$
and $\gamma_2$ are positive-valued,
$$
T\circ \gamma_1(s)=\gamma_2(s) \qquad 
(\text{resp. } T\circ \gamma_1(s)=\gamma_2(-s))
$$
holds on $\mathbb I_l$
if $T$ is an orientation preserving
(resp. reversing) isometry of $\R^3$, which implies
$\mu_1(s)=\mu_2(s)$ 
(resp. $\mu_1(s)=\mu_2(-s)$) for $s\in \mathbb I_l$.
So (1) holds.
\end{proof}

Let $\mu:\mathbb T^1_a\to \R$ be a $C^\infty$-function on 
a one dimensional torus $\mathbb T^1_a:=\R/a\Z$ ($a>0$).
Then an $a$-periodic function 
$\tilde \mu:\R\to \R$ defined by
\begin{equation}\label{eq:mu111}
\tilde \mu:=\mu\circ \pi
\end{equation}
is called the {\it lift} of the function $\mu$,
where $\pi:\R\to \mathbb T^1_a$ is the canonical
projection.

\begin{Definition}\label{ft-sym2}
We set $J_i:=\R/a_i\Z$ ($a_i>0,\,\, i=1,2$).
Two functions $\mu_i:J_i\to \R$ ($i=1,2$) are said to be
{\it equi-affine equivalent} 
if there exists a diffeomorphism $\phi:\R\to \R$ 
of the form 
$$
\phi(u)=\sigma u+d \qquad (\sigma\in \{1,-1\},\,\, d\in \R)
$$
such that
$
\tilde \mu_2\circ \phi=\tilde \mu_1,
$
where $\tilde \mu_i$ $(i=1,2)$
are the lifts of the functions $\mu_i$.
In the case of $(\mu:=)\mu_1=\mu_2$ and $J_1=J_2$, such a map
$\phi:\R\to \R$ is called a {\it symmetry} 
of $\mu$ if $\phi$ is non-trivial, that is,
either $\sigma=-1$ or $d\not \in a\Z$ holds, 
where $a:=a_1(=a_2)$.
If $\mu$ is a non-constant function, then
$\phi$ can be a candidate for symmetries
of $\mu$ only when $d$ belongs
to $(a\Q)\setminus (a\Z)$, 
where $\Q$ is the set of rational numbers.
\end{Definition}

\begin{Lemma}\label{Aeq2}
Let $J_i$ $(i=1,2)$ be two bounded closed intervals, and
let $\gamma_i:J_i\to \R^2$  be plane curves
of length $l$ parametrized by
arc-length. 
Suppose that each curvature function of
$\gamma_i$ $(i=1,2)$ has a $C^\infty$-extension 
$\tilde \mu_i:\R\to \R$ which is 
the lift of an $l$-periodic function 
$\mu_i:\mathbb T^1_l \to \R$.
Then the following two assertions
are equivalent:
\begin{enumerate}
\item the function $\mu_2$
is equi-affine equivalent to $\mu_1$,
\item there exist a plane curve $\tilde \gamma:\R\to \R^2$
and an orientation preserving isometry $T$ of $\R^3$
such that $\gamma_1(J_1)$ and
$T\circ \gamma_2(J_2)$ 
are subarcs of $\tilde \gamma(\R)$.
\end{enumerate}
\end{Lemma}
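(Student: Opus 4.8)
The plan is to reduce everything to the fundamental theorem of plane curves by passing from the given arcs $\gamma_i$ to their canonical extensions $\tilde\gamma_i:\R\to\R^2$ determined by the $l$-periodic curvatures $\tilde\mu_i$, and then to track orientations carefully. First I would invoke the fundamental theorem of curves (as in the proof of Lemma \ref{Aeq1}, cf. \cite[Chapter 2]{UY-book}) to produce, for each $i$, an arc-length curve $\tilde\gamma_i:\R\to\R^2$ whose signed curvature is $\tilde\mu_i$ and which restricts to $\gamma_i$ on $J_i$. Because $\tilde\mu_i$ is $l$-periodic, the rigid motion carrying the Frenet frame at $s$ to the frame at $s+l$ is a fixed orientation-preserving isometry $\Psi_i$ of $\R^2$ with $\tilde\gamma_i(s+l)=\Psi_i(\tilde\gamma_i(s))$, so the image $\tilde\gamma_i(\R)$ carries a genuine $l$-periodic structure; this is what will let the converse direction extract a global relation from a local one.

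Next, for (1) $\Rightarrow$ (2), suppose $\tilde\mu_2\circ\phi=\tilde\mu_1$ with $\phi(u)=\sigma u+d$, and set $\tilde\gamma:=\tilde\gamma_1$, so that $\gamma_1(J_1)\subset\tilde\gamma(\R)$ automatically. If $\sigma=1$, the reparametrization $\tilde\gamma_2(\cdot+d)$ has curvature $\tilde\mu_2(\cdot+d)=\tilde\mu_1$, so the fundamental theorem furnishes an orientation-preserving plane isometry $T_0$ with $T_0\circ\tilde\gamma_2(\cdot+d)=\tilde\gamma_1$, whence $T_0(\gamma_2(J_2))\subset\tilde\gamma_1(\R)$. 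If $\sigma=-1$, then composing a reflection $S$ with the reversal gives $S\circ\tilde\gamma_2(-\cdot+d)$ curvature $\tilde\mu_1$, producing an orientation-reversing plane isometry $T_0$ with $T_0(\gamma_2(J_2))\subset\tilde\gamma_1(\R)$. In either case the point—and this is the reason the statement is phrased with an isometry of $\R^3$ rather than of $\R^2$—is that any plane isometry, of either sign, is the restriction to the plane of an orientation-preserving isometry $T$ of $\R^3$ (take $T$ to be the identity in the normal direction when $T_0$ preserves orientation, and to flip the plane over, e.g.\ $z\mapsto -z$, when $T_0$ reverses it, so that $\det T=1$ in both cases). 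This $T$ is the required orientation-preserving isometry of $\R^3$.

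For (2) $\Rightarrow$ (1), I would run this in reverse. Given $\tilde\gamma$ and an orientation-preserving $T$ of $\R^3$ with $\gamma_1(J_1)$ and $T(\gamma_2(J_2))$ both subarcs of $\tilde\gamma(\R)$, the arcs lie in a common plane, on which $T$ restricts to a plane isometry $T_0$ that is either orientation-preserving or orientation-reversing. Parametrizing $\tilde\gamma$ by arc length and reading off its signed curvature along the two subarcs, the $\gamma_1$-piece forces this curvature to equal $\mu_1$, while the $T(\gamma_2)$-piece forces it to equal $\mu_2$ or $-\mu_2$ according to the sign of $T_0$ and the direction of traversal. Since both subarcs have length $l$ and the curvatures $\mu_i$ are restrictions of the $l$-periodic functions $\tilde\mu_i$, comparing these two determinations through the $l$-periodic structure of $\tilde\gamma(\R)$ yields $\tilde\mu_2\circ\phi=\tilde\mu_1$ for an affine $\phi(u)=\sigma u+d$, which is precisely the equi-affine equivalence of Definition \ref{ft-sym2}.

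I expect the orientation bookkeeping to be the main obstacle rather than any analytic estimate. The subtle point, present in both directions, is matching the sign $\sigma$ in the affine reparametrization against the orientation behaviour of $T_0$ on the plane together with the direction in which each subarc is traversed inside $\tilde\gamma$; the clean way to organize it is exactly the observation that working with $\mathrm{Isom}^+(\R^3)$ folds both plane-orientation cases into one. A secondary care point is the converse step, where one must genuinely use the $l$-periodicity (and the common length $l$ of the two subarcs) to promote the purely local curvature match along $\tilde\gamma$ to the global equi-affine equivalence of $\tilde\mu_1$ and $\tilde\mu_2$.
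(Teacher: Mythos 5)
Your forward direction (1)\,$\Rightarrow$\,(2) is correct and is essentially the paper's own argument: extend each $\gamma_i$ to $\tilde\gamma_i:\R\to\R^2$ with curvature $\tilde\mu_i$ by the fundamental theorem, and match the two extensions by an isometry. You are in fact more careful than the paper at the one delicate point: when $\sigma=-1$, parameter reversal flips the sign of the signed curvature, so the plane isometry carrying $\tilde\gamma_2(-\cdot+d)$ onto $\tilde\gamma_1$ must be orientation-\emph{reversing} in $\R^2$ (the paper's proof asserts an orientation-preserving isometry of $\R^2$, which is not accurate in this case), and your observation that a plane isometry of either sign extends to an orientation-preserving isometry of $\R^3$ is exactly the point of the statement's phrasing.

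The converse, however, contains a genuine gap, located precisely at the phrase ``comparing these two determinations through the $l$-periodic structure of $\tilde\gamma(\R)$.'' The monodromies $\Psi_i$ you construct give a periodic structure on the canonical extensions $\tilde\gamma_1(\R)$ and $\tilde\gamma_2(\R)$, but the curve $\tilde\gamma$ postulated in assertion (2) is an \emph{arbitrary} plane curve: its curvature is constrained (up to sign and affine reparametrization) only on the two parameter windows carrying $\gamma_1(J_1)$ and $T\circ\gamma_2(J_2)$, and these windows need not overlap, so no global relation between $\tilde\mu_1$ and $\tilde\mu_2$ can be extracted. Indeed, as literally stated, (2) does not imply (1): take the $l$-periodic positive functions $\mu_1\equiv 1$ and $\mu_2(s)=2+\sin(2\pi s/l)$, which are not equi-affine equivalent, and let $\tilde\gamma$ be a plane curve whose curvature equals $\tilde\mu_1$ on $[0,l]$, equals $\tilde\mu_2(\cdot-2l)$ on $[2l,3l]$, and is smoothly interpolated in between; then (2) holds with $T=\mathrm{id}$ while (1) fails. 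To be fair, the paper offers no argument for this direction either (it states that ``the converse assertion can be proved easily''), and the honest repair is to read (2) with $\tilde\gamma$ required to have $l$-periodic curvature (e.g.\ $\tilde\gamma=\tilde\gamma_1$), which is the situation in the paper's application. Under that reading your comparison does close up, since two $l$-periodic functions agreeing on a window of length $l$ agree everywhere; but even then one needs positivity of the curvatures (true in the paper's setting, not among the lemma's hypotheses) to exclude the sign flip $\mu_2\mapsto-\mu_2$ caused by reversing the direction of traversal, because the equi-affine equivalence of Definition \ref{ft-sym2} permits no change of sign of the function values.
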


\begin{proof}
We suppose (1).
Since each curvature function of
$\gamma_i$ ($i=1,2$)
can be extended as an
$l$-periodic $C^\infty$-function $\tilde \mu_i$ on $\R$,
the curve $\gamma_i$ is extended as a regular curve
$\tilde \gamma_i:\R\to \R^2$
whose curvature  function is $\tilde \mu_i$.
If (1) holds, then
there exist $\sigma\in \{1,-1\}$ and $d\in [0,l)$
such that
$$
\tilde \mu_1(s)=\tilde \mu_2(\sigma s+d)\qquad (s\in \R).
$$
By the fundamental theorem of plane curves,
$\tilde \gamma_2(s)$ coincides with
$T\circ \tilde \gamma_1(\sigma s+d)$,
where $T$ is an orientation preserving isometry of $\R^2$.
By setting $\tilde \gamma:=\tilde \gamma_1$,
(2) is obtained.
On the other hand, the converse assertion can be  proved easily.
\end{proof}

We now define
the \lq\lq geodesic equivalence relation'' on $\mc D(|C|)$
as follows:

\begin{Definition}\label{Def:Eqe1}
Let $C$ be a non-closed space curve (i.e. $C:=\mb c (\mathbb I_l)$) 
or a closed curve (i.e. $C:=\mb c (\mathbb T^1_l)$) of total length $l$
embedded in $\R^3$.
Two developable strip germs $f,\, g\in {\mc D(|C|)}$ are
said to be {\it geodesically equivalent} if the normalized
geodesic curvature function $\hat \mu_{f}:J\to \R$
is equi-affine equivalent to $\hat \mu_{g}:J\to \R$,
where $J=\mathbb I_l$ or $J=\mathbb T^1_l$.
\end{Definition}

The following assertion holds:

\begin{Prop}\label{prop:2eq}
Let $f,g\in \mc D(|C|)$.
If $f$ and $g$ are right equivalent, then they are 
geodesically equivalent.
\end{Prop}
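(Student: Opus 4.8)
The plan is to reduce everything to normal forms and then read off the conclusion from the classification of right equivalence already established. First I would invoke Proposition~\ref{prop:fF} to pass from $f$ and $g$ to their associated normal forms $F$ and $G$ of base point $\mb x_0$; the proposition guarantees $\hat\mu_f=\hat\mu_F=\mu_F$ and $\hat\mu_g=\hat\mu_G=\mu_G$, so it suffices to compare the two geodesic curvature functions $\mu_F$ and $\mu_G$. By Proposition~\ref{fact:1}, the hypothesis that $f$ and $g$ are right equivalent is equivalent to the dichotomy $F=G$ or $F=G^\sharp$, where $F,G$ are exactly these normal forms based at $\mb x_0$. Thus the proof splits into two cases according to which alternative holds.

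In the first case $F=G$, the two normalized geodesic curvature functions are literally identical, $\hat\mu_f=\mu_F=\mu_G=\hat\mu_g$, so they are equi-affine equivalent via the identity map ($\sigma=1$, $d=0$), and there is nothing more to check.

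The second case $F=G^\sharp$ is where the actual computation lives, and this is the only delicate point. I would use the transformation rules of Remark~\ref{rmk:c-sharp}. Since normal forms are parametrized by arc-length, $J=\mathbb I_l=[-l/2,l/2]$ gives $b+c=0$, hence $G^\sharp(s,v)=G(-s,v)$. Combining $\kappa^\sharp(s)=\kappa_G(-s)$ from \eqref{eq:s1} and $\alpha^\sharp(s)=-\alpha_G(-s)$ from \eqref{eq:alpha-s} with the geodesic curvature formula \eqref{eq:mu-f}, namely $\mu=\kappa\cos\alpha$, and using that cosine is even, I obtain $\mu_{G^\sharp}(s)=\kappa_G(-s)\cos\alpha_G(-s)=\mu_G(-s)$. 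Consequently $\hat\mu_f(s)=\mu_F(s)=\mu_{G^\sharp}(s)=\mu_G(-s)=\hat\mu_g(-s)$, so the affine map $\phi(s)=-s$ (that is, $\sigma=-1$, $d=0$) satisfies $\hat\mu_g\circ\phi=\hat\mu_f$, which is precisely the equi-affine equivalence required by Definition~\ref{Def:Eqe1}.

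Finally I would observe that the same argument covers both $J=\mathbb I_l$ and $J=\mathbb T^1_l$ simultaneously: in the closed case $\hat\mu_f$ and $\hat\mu_g$ are already given as $l$-periodic lifts on $\R$, and the reflection $\phi(s)=-s$ is an admissible equi-affine map in the sense of Definition~\ref{ft-sym2}, so the conclusion holds verbatim. The main obstacle, if any, is purely the sign-bookkeeping in the $\sharp$-case—tracking the sign change $\alpha^\sharp=-\alpha\circ(b+c-\,\cdot\,)$ against the unchanged $\kappa$ and the evenness of cosine, which together force $\mu$ to be invariant under reversal up to the reflection $s\mapsto -s$; no estimate or limiting argument is needed.
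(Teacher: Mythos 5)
Your proof is correct and follows essentially the same route as the paper: pass to the associated normal forms via Proposition \ref{prop:fF} and apply the classification of right equivalence from Proposition \ref{fact:1}. The only organizational difference is that the paper disposes of the orientation-reversing case at the outset by replacing $g$ with $g^\sharp$, so the dichotomy collapses to $F=G$ and one gets $\mu_f=\mu_g$ on the nose, whereas you keep both cases and carry out the sign computation $\mu_{G^\sharp}(s)=\mu_G(-s)$ explicitly at the end---the same bookkeeping, placed differently.
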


\begin{proof}
By replacing $g$ by $g^\sharp$,
we may assume that $f,g\in \mc D(C)$. 
We denote by $F$ and $G$ the normal forms associated with
$f$ and $g$ satisfying $F(0,0)=G(0,0)=\mb x_0$, respectively. 
Then by Corollary 1.10, $F$ coincides with $G$, which implies
$
\mu_f=\mu_F=\mu_G=\mu_g.
$
\end{proof}

Later, we will see that the geodesic equivalence relation
is useful for constructing curved foldings with a given crease and crease pattern.
Based on this, we give the following:

\begin{Def}\label{def:IS}
For $f\in {\mc D(|C|)}$,
a developable strip germ $g\in {\mc D(|C|)}$ is called an {\it isomer}
of $f$ if 
\begin{enumerate}
\item $g$ is geodesically equivalent to $f$, but
\item $g$ is not right equivalent to $f$.
\end{enumerate}
\end{Def}

\begin{Remark}
The above definition of isomers is an analogue for that for cuspidal 
edges (cf. \cite{HNSUY}).
In the case of cuspidal edges, (1) was replaced by the condition 
that the first fundamental forms of two surfaces are isometric.
(It should be remarked that
all developable surfaces all mutually locally isometric.)
\end{Remark}

We now give a tool to construct
isomers of a given developable strip.
Let $\tilde C$ be an embedded curve in $\R^3$ which 
is homeomorphic to $C$ and has the same total length as $C$.

\begin{Definition}
Let $\tilde{\mb c}(s)$ ($s\in J$) be the arc-length parametrization
of $\tilde C$, and let $\tilde \kappa(s)$ be its curvature function.
Then $\tilde {\mb c}$ is said to be {\it compatible} to $f\in \mc D(C)$
if it satisfies
\begin{equation}\label{eq:CM}
|\hat \mu_f(s)|<\tilde \kappa(s)\qquad (s\in J),
\end{equation}
where  $\hat \mu_f$ is the normalized geodesic curvature function
of $f$.
\end{Definition}

Let $J_0$ be a set which is homeomorphic to $J$.
The following proposition plays an essential role in
considering the relationship of developable strips and 
curved foldings.

\begin{Proposition}\label{prop:Equi}
Let $f:U\to \R^3$ be a developable strip 
belonging to $\mc D(C)$
satisfying $J_0\times \{0\}\subset U$ and $f(J_0\times \{0\})=C$.
Let $u_0\in J_0$ be the point such that $f(u_0,0)=\mb x_0$,
where $\mb x_0$ is the base point $($cf.~\eqref{eq:Bpt}$)$
 of $C$.
Suppose that
$\tilde{\mb c}(s)$ $(s\in J)$ is the arc-length parametrization
of $\tilde C$ 
which is compatible to $f$.
Then  there exist
a tubular neighborhood $V(\subset U)$ of $J_0\times \{0\}$
in $J_0\times \R$ and developable strips
$g_{+},g_-$ belonging to $\mc D(\tilde C)$ 
such that 
\begin{enumerate}
\item $g_+(u_0,0)=g_-(u_0,0)=\tilde {\mb c}(0)$
$($this condition is automatically satisfied if $J_0$
is a closed bounded interval$)$,
\item 
$
J_0\ni u \mapsto g_+(u,0)=g_-(u,0)\in \R^3
$ 
gives a parametrization of $\tilde C:=\tilde {\mb c}(J)$,
\item the first angular functions $\alpha_\pm(u)$ of $g_\pm(u,v)$
satisfy $\alpha_+=-\alpha_-$,
\item $\mu_f=\mu_g$ on $J_0$ $($cf. \eqref{eq:mu-f}$)$, 
\item $\alpha_f(u)$ has the same sign as
$\alpha_{g_+}(u)$ for each $u\in J_0$, and
\item $g_+$ and $g_-$ are normal forms if 
the same is true of
$f$.
\end{enumerate}
\end{Proposition}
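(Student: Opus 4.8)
The plan is to exploit the factorization of the geodesic curvature recorded in \eqref{eq:mu-f}, namely $\mu=\kappa\cos\alpha$, which shows that prescribing the geodesic curvature along a fixed curve is the same as solving for the first angular function $\alpha$. Since \eqref{eq:f-1} together with the developability relation \eqref{eq:beta_f} shows that a developable strip germ in $\mc D(\tilde C)$ is completely determined by its base point and its first angular function (the second angular function $\beta$ being forced), the entire construction reduces to a suitable choice of $\alpha_{g_+}$ and $\alpha_{g_-}$.

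First I would fix the parametrization of $\tilde C$ over $J_0$ by transporting the arc-length correspondence carried by $f$: writing $s=s(u):=\int_{u_0}^{u}|\mb c_f'(w)|\,dw$ for the arc-length of $C$ along $\mb c_f$ (so $s(u_0)=0$), I set $g_\pm(u,0):=\tilde{\mb c}(s(u))$. Because $s(\cdot)$ is a diffeomorphism onto $J$ (the two curves sharing total length $l$), the map $u\mapsto g_\pm(u,0)$ parametrizes $\tilde C$ and satisfies $g_\pm(u_0,0)=\tilde{\mb c}(0)$, giving (1) and (2). Moreover the geometric curvature of this parametrization at $u$ equals $\tilde\kappa(s(u))$, and $\hat\mu_f(s(u))=\mu_f(u)$ by the definition of the normalized geodesic curvature. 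If $f$ is itself a normal form then $u=s$ and $g_\pm(u,0)=\tilde{\mb c}(u)$ is the arc-length parametrization of $\tilde C$, so $g_\pm$ is again a normal form, which is (6).

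Next I would define the angular functions. Since $\mu_f>0$ and $\tilde{\mb c}$ is compatible to $f$, the inequality \eqref{eq:CM} yields $0<\mu_f(u)/\tilde\kappa(s(u))<1$ for every $u\in J_0$, so there is a unique $\alpha_{g_+}\in C^\infty_{\pi/2}(J_0)$ with $\cos\alpha_{g_+}(u)=\mu_f(u)/\tilde\kappa(s(u))$ whose sign agrees with that of $\alpha_f$; this is (5). Putting $\alpha_{g_-}:=-\alpha_{g_+}$ gives (3), and since $\cos$ is even both strips obey $\mu_{g_\pm}(u)=\tilde\kappa(s(u))\cos\alpha_{g_\pm}(u)=\mu_f(u)$, which is (4) via \eqref{eq:mu-f}. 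I would then define $\beta_{g_\pm}\in(0,\pi)$ through \eqref{eq:beta_f} (legitimate since $\sin\alpha_{g_\pm}\neq0$), build the ruling vector fields $\xi_{g_\pm}$ by \eqref{eq:f-1} using the Frenet frame $\tilde{\mb e},\tilde{\mb n},\tilde{\mb b}$ of the $J_0$-parametrization of $\tilde C$, and set $g_\pm(u,v):=g_\pm(u,0)+v\,\xi_{g_\pm}(u)$ on a tubular neighborhood $V\subset U$ of $J_0\times\{0\}$. Developability is built in by \eqref{eq:beta_f}, and condition \eqref{eq:alpha_f0} holds since $\cos\alpha_{g_\pm}\in(0,1)$; the computation $\xi_{g_\pm}\cdot\tilde{\mb n}=\sin\beta_{g_\pm}\cos\alpha_{g_\pm}>0$ (cf.\ \eqref{eq:xi-n}) shows the ruling is transverse to $\tilde{\mb e}$, so $g_\pm$ is an immersion along $v=0$ and hence an embedding on a sufficiently small $V$, placing $g_\pm$ in $\mc D(\tilde C)$.

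The one point that I expect to require genuine care—the main technical obstacle—is the global smoothness and single-valuedness of $\alpha_{g_+}$, especially in the closed case $J=\mathbb T^1_l$, where the sign choice in inverting the cosine must be consistent around the loop. Here I would invoke that $\alpha_f$, being continuous and valued in $(-\pi/2,\pi/2)\setminus\{0\}$ on the connected set $J_0$, has constant sign; therefore the sign attached to $\arccos\bigl(\mu_f/\tilde\kappa\bigr)$ is globally determined, and the resulting $\alpha_{g_+}$ descends to a well-defined smooth function on $J_0$, so that $g_\pm$ are honest developable strips and all of (1)–(6) hold.
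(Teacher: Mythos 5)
Your proposal is correct and takes essentially the same route as the paper's own proof: reparametrize $\tilde C$ by the arc-length correspondence carried by $f$, define $\alpha_{g_+}$ through $\cos\alpha_{g_+}=\mu_f/\tilde\kappa$ with sign matched pointwise to $\alpha_f$ (legitimate by compatibility), set $\alpha_{g_-}=-\alpha_{g_+}$, and recover the second angular functions and ruling fields from \eqref{eq:beta_f} and \eqref{eq:f-1}. The only difference is cosmetic: you spell out the immersion/embedding verification and the global sign-consistency of $\alpha_{g_+}$ (including the closed case), which the paper compresses into ``it can be easily checked that $g_\pm$ satisfy (2)--(5).''
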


\begin{proof}
Since $C$ has total arc-length $l$, 
we can take the arc-length parametrization
$u=u(s)$ ($s\in J$) so that $u_0=u(0)$ and
${\mb c}(s):=\mb c_f(u(s))$ ($s\in J$)
parametrizes $C$.
Since $\tilde {\mb c}$ is compatible to $f$,
$
\tilde {\mb c}(u):=\tilde {\mb c}(s(u))
$ $(u\in J_0)$
gives a parametrization of $\tilde C$ 
defined on $J_0$
such that
\begin{equation}\label{eq;star-d}
\mu_f(u)<\tilde\kappa(u) \qquad (u\in J_0),
\end{equation}
where $\tilde \kappa(u)$ is the curvature function 
of $\tilde{\mb c}(u)$.
Then there exists a unique function $\tilde \alpha:J_0\to (-\pi/2,\pi/2)$ 
such that
$$
\tilde \kappa(u)\cos \tilde \alpha(u)=\mu_f(u), \qquad
\tilde \alpha(u)\alpha_{f}(u)\ge 0
$$
for each $u\in J_0$.
By the compatibility of $\tilde{\mb c}$, 
the function $\sin \tilde \alpha$ 
never vanishes.
Thus, we can define the second angular 
function $\tilde\beta:J_0\to (0,\pi)$
so that
$$
\cot \tilde\beta_\pm(u):=
\frac{\tilde\alpha'(u)\pm |\tilde{\mb c}'(u)|\tilde \tau(u)}
{|\tilde{\mb c}'(u)|\tilde\kappa(u)\sin \tilde \alpha(u)},
$$
where $\tilde\kappa(u)$ and $\tilde\tau(u)$ are the curvature 
and torsion functions of $\tilde{\mb c}(u)$ ($u\in J_0$),
respectively.
We set
\begin{align*}
g_\pm(u,v)& :=\tilde {\mb c}(u)+\tilde \xi_\pm(u), \\
\tilde \xi_\pm (u)&:=\cos \tilde \beta_\pm (u)\tilde{\mb e}(u)+
\sin \tilde \beta_\pm (u)
\Big(
\cos \tilde \alpha(t)\tilde{\mb n}(u)\pm \sin 
\tilde\alpha(u)\tilde{\mb b}(u)
\Big),
\end{align*}
where $\tilde{\mb e}$, $\tilde{\mb n}$ and $\tilde{\mb b}$
are the unit tangent vector field, the
unit principal normal vector field and
the unit bi-normal vector field of $\tilde{\mb c}$,
respectively.
Since $u_0=u(0)$, we obtain (1).
It can be easily checked that 
$g_\pm$ satisfy (2)--(5).
Finally, if $f$ is written in a normal form,
then $u=s$ holds for $s\in J$, and
$\tilde c(u)=\tilde c(s)$ is parametrized
by arc-length. In particular, $g_\pm(u,v)=g_\pm(s,v)$
give normal forms.
So (6) is obtained.
\end{proof}

\begin{Corollary}\label{Prop:1}
Let
$
f:U\to \R^3
$
be a developable strip belonging to
$\mc D(C)$, where 
$U$ is a tubular neighborhood of 
$J_0\times \{0\}$ in $J_0\times \R$.
Then there exist
a tubular neighborhood $V(\subset U)$ of $J_0\times \{0\}$
in $J_0\times \R$ and a developable strip
$
g:V\to \R^3
$
such that
\begin{enumerate}
\item $f(u,0)=g(u,0)$ for each $u\in J_0$,
\item $\alpha_g(u)=-\alpha_f(u)$ for each $u\in J_0$,
\item $g$ is uniquely determined from $f$ as a strip germ along $C$, and
\item $g$ is a normal form if so is $f$.
\end{enumerate}
\end{Corollary}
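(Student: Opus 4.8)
The plan is to obtain this as the special case $\tilde C=C$ of Proposition~\ref{prop:Equi}. First I would verify that $C$ itself, equipped with the arc-length parametrization induced by $f$, is \emph{compatible} to $f$ in the sense of \eqref{eq:CM}. Indeed, taking $\tilde{\mb c}$ to be the arc-length parametrization of $\tilde C:=C$ with the given orientation, its curvature $\tilde\kappa$ coincides with the curvature $\kappa_f$ of $C$, while the normalized geodesic curvature satisfies $\hat\mu_f=\mu_f=\kappa_f\cos\alpha_f$; since $|\cos\alpha_f|<1$ by \eqref{eq:alpha_f0}, we get $|\hat\mu_f|=\kappa_f|\cos\alpha_f|<\kappa_f=\tilde\kappa$, which is exactly \eqref{eq:CM}. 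Hence Proposition~\ref{prop:Equi} applies and produces a tubular neighborhood $V\subset U$ and a pair $g_+,g_-\in\mc D(C)$.

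Next I would set $g:=g_-$ and pin down its first angular function. In the construction inside the proof of Proposition~\ref{prop:Equi}, the auxiliary angle $\tilde\alpha$ is characterized by $\tilde\kappa\cos\tilde\alpha=\mu_f$ together with the sign-matching condition $\tilde\alpha\,\alpha_f\ge 0$. Because here $\tilde\kappa=\kappa_f$ and $\mu_f=\kappa_f\cos\alpha_f$, the first relation forces $\cos\tilde\alpha=\cos\alpha_f$, and the sign condition then upgrades this to $\tilde\alpha=\alpha_f$. Consequently $\alpha_{g_+}=\alpha_f$, and by property (3) of Proposition~\ref{prop:Equi} (namely $\alpha_+=-\alpha_-$) we obtain $\alpha_{g_-}=-\alpha_f$, which is conclusion (2). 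Since $|\cos\alpha_{g}|=|\cos\alpha_f|\in(0,1)$ and $f\in\mc D(C)$, we indeed have $g=g_-\in\mc D(C)$. Conclusion (1) is then immediate from property (2) of Proposition~\ref{prop:Equi}, as $g(u,0)=\tilde{\mb c}(u)=\mb c_f(u)=f(u,0)$, and conclusion (4) is precisely property (6) of Proposition~\ref{prop:Equi}.

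For the uniqueness statement (3) I would argue directly rather than through Proposition~\ref{prop:Equi}. A developable strip in $\mc D(C)$ whose base curve is the fixed parametrization $\mb c_f$ is completely determined by its first angular function once the normalizations \eqref{eq:alpha_f} and \eqref{eq:beta_f2} are imposed: given $\alpha_g=-\alpha_f$, the second angular function $\beta_g$ is fixed by the developability relation \eqref{eq:beta_f} together with the constraint $0<\beta_g<\pi$, whence the normalized ruling field $\xi_g$ of \eqref{eq:f-1}, and therefore the germ $g(u,v)=\mb c_f(u)+v\,\xi_g(u)$, is uniquely determined along $C$. Thus any two strips satisfying (1) and (2) must agree as germs.

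The only genuinely delicate point is the sign bookkeeping that yields $\tilde\alpha=\alpha_f$ rather than merely $\cos\tilde\alpha=\cos\alpha_f$; this is where the clause $\tilde\alpha\,\alpha_f\ge 0$ built into Proposition~\ref{prop:Equi} is essential, and it is exactly what singles out $g_-$ (not $g_+$) as the strip with $\alpha_g=-\alpha_f$. Everything else is a transcription of the conclusions of Proposition~\ref{prop:Equi} into the present statement.
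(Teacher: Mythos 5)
Your proposal is correct and follows essentially the same route as the paper: the paper's proof also sets $\tilde{\mb c}(u):=f(u,0)$, invokes Proposition \ref{prop:Equi} (compatibility holding because $f\in\mc D(C)$), and takes $g:=g_-$ as the dual. Your additional details --- the explicit verification of \eqref{eq:CM}, the sign bookkeeping showing $\tilde\alpha=\alpha_f$ hence $\alpha_{g_-}=-\alpha_f$, and the direct uniqueness argument via \eqref{eq:beta_f} and the normalizations \eqref{eq:alpha_f}, \eqref{eq:beta_f2} --- are exactly the steps the paper leaves implicit.
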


We call this $g$ the {\it dual} of $f$ and
denote it by $\check f$.
Then an involution
$
\mc D(C)\ni f \mapsto \check f\in \mc D(C)
$
is induced.
Moreover, we have
\begin{equation}\label{eq:D-dual}
\alpha_{\check f}(u)=-\alpha_{f}(u)\qquad (u\in J_0).
\end{equation}

\begin{proof}[Proof of Corollary \ref{Prop:1}]
Setting $\tilde {\mb c}(u):=f(u,0)$, we 
can apply Proposition \ref{prop:Equi} because $f\in \mc D(C)$.
Then the absolute value of the first angular function of $g_-$
coincides with that of $f\in \mc D(C)$, but
the sign is opposite. Thus, $g_-$ gives the desired developable strip.
\end{proof}

By (2) of Corollary \ref{Prop:1},
$\check f$ has the same geodesic curvature function
as $f$, and we obtain the following:

\begin{Proposition} \label{fact:fm1} 
For each $f\in \mc D(C)$, the dual $\check f$ is an isomer of $f$.
\end{Proposition}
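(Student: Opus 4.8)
The plan is to verify the two defining conditions of an isomer from Definition \ref{def:IS}: that $\check f$ is geodesically equivalent to $f$, but is not right equivalent to it. The first condition is almost immediate. Since by \eqref{eq:D-dual} the dual satisfies $\alpha_{\check f}=-\alpha_f$, and since $\cos$ is an even function while the base curve $C$ (hence its curvature function $\kappa_f$) is unchanged, the formula \eqref{eq:mu-f} gives
$$
\mu_{\check f}=\kappa_f\cos\alpha_{\check f}=\kappa_f\cos\alpha_f=\mu_f
$$
on $J_0$. Passing to the arc-length parameter, the normalized geodesic curvature functions coincide, $\hat\mu_{\check f}=\hat\mu_f$, so they are equi-affine equivalent via the identity map ($\sigma=1$, $d=0$), which is exactly geodesic equivalence.

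The substance is therefore the second condition, and I would prove it by showing that $f$ and $\check f$ fail criterion (4) of Proposition \ref{fact:1}. First I would reduce to normal forms: let $F\in\mc D(C)$ be the normal form associated with $f$ with base point $\mb x_0$ (Proposition \ref{prop:fF}), and write $\alpha:=\alpha_F$ for its first angular function. Since a normal form is determined by its base point and its first angular function (Proposition \ref{prop:U}), and since the dual negates the first angular function while preserving the normal-form property (Corollary \ref{Prop:1}\,(2),(4)), the normal form associated with $\check f$ is precisely $\check F=F^{-\alpha}$, namely the normal form with the same base point $\mb x_0$ and first angular function $-\alpha$.

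It then remains to compare the two strips $F$ and $G:=\check F$. Their first angular functions differ by $\alpha_G-\alpha_F=-2\alpha$, which never vanishes because $\alpha\in C^\infty_{\pi/2}(J)$ takes values in $(-\pi/2,\pi/2)\setminus\{0\}$. Hence Lemma \ref{lem:Tv} shows that the normalized ruling vector fields of $F$ and $G$ are linearly independent at every point of $C$, and Proposition \ref{lem:key} then yields $F(\Omega_\epsilon)\cap G(\Omega_\epsilon)=C$ for all sufficiently small $\epsilon>0$. Since $F(\Omega_\epsilon)$ is a two-dimensional strip strictly larger than the curve $C$, we obtain $F(\Omega_\epsilon)\ne G(\Omega_\epsilon)$, so condition (4) of Proposition \ref{fact:1} fails and $f$ is not right equivalent to $\check f$; together with the first paragraph this makes $\check f$ an isomer of $f$. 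The one point that needs care---more a bookkeeping subtlety than a genuine obstacle---is the identification of the normal form of $\check f$ with $\check F$. Arguing through criterion (4) is what keeps this clean, since a single violation of (4) simultaneously rules out both $F=G$ and $F=G^\sharp$, so the reversal $\sharp$ never has to be treated separately.
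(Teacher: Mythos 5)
Your proof is correct. Be aware, though, of what the paper actually does here: it derives Proposition \ref{fact:fm1} from the single remark that, by (2) of Corollary \ref{Prop:1}, $\check f$ has the same geodesic curvature function as $f$ --- i.e.\ it spells out only the geodesic-equivalence half and leaves the non-right-equivalence half implicit. Your first paragraph therefore coincides with the paper's justification, and your real contribution is the second half. The mechanism you chose --- Lemma \ref{lem:Tv} applied to $\alpha$ and $-\alpha$ (whose difference $-2\alpha$ is nowhere zero since $\alpha\in C^\infty_{\pi/2}(J)$), then Proposition \ref{lem:key} to get $F(\Omega_\epsilon)\cap \check F(\Omega_\epsilon)=C$, then failure of condition (4) of Proposition \ref{fact:1} --- is exactly the argument the paper itself deploys later to prove Proposition \ref{prop:220}, i.e.\ \eqref{eq:check}; so you are in effect front-loading that later result, which is legitimate since Lemma \ref{lem:Tv} and Proposition \ref{lem:key} precede Proposition \ref{fact:fm1}. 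There is also a shorter, purely algebraic route through condition (2) of Proposition \ref{fact:1}: right equivalence would force $F=\check F$ or $F=(\check F)^\sharp$; the first gives $\alpha_F=-\alpha_F$, contradicting \eqref{eq:alpha_f}, and the second is impossible already at $v=0$, since the reverse parametrizes $C$ in the opposite direction and $\mb c$ is injective. Your route is heavier but buys a stronger conclusion (disjointness of the two strips away from $C$, not merely non-coincidence). Finally, the bookkeeping point you flag --- that the normal form associated with $\check f$ is $\check F$ --- does rest on the fact that dualization commutes with passing to normal forms (the ruling line, hence the first angular function, is a geometric invariant, so right equivalence of $f$ and $F$ transfers to their duals); this naturality is used implicitly throughout the paper (e.g.\ in Corollary \ref{cor:Tf}), so asserting it at the level of detail you give is consistent with the paper's own standard.
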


Moreover, we have the following:

\begin{Proposition}\label{fact:f}
Let $F,G\in \mc D(C)$ be normal forms satisfying
$F(0,0)=G(0,0)$.
If $\mu_F=\mu_G$
and $F(0,0)=G(0,0)$, then either $G=F$ or $G=\check F$ holds.
\end{Proposition}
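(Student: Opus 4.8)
The plan is to reduce the hypothesis $\mu_F=\mu_G$ to a relation between the first angular functions $\alpha_F$ and $\alpha_G$, and then to invoke the uniqueness of normal forms furnished by Proposition~\ref{prop:U}. First I would note that, since $F,G\in\mc D(C)$ are both normal forms with $F(0,0)=G(0,0)=\mb x_0$, their base curves $s\mapsto F(s,0)$ and $s\mapsto G(s,0)$ are each the arc-length parametrization $\mb c(s)$ of $C$ issuing from $\mb x_0$ compatibly with the orientation of $C$; hence they coincide, and $F$ and $G$ share the same curvature function $\kappa(s)>0$. By the formula \eqref{eq:mu-f}, the equality $\mu_F=\mu_G$ then reads $\kappa(s)\cos\alpha_F(s)=\kappa(s)\cos\alpha_G(s)$, and dividing by $\kappa(s)>0$ gives $\cos\alpha_F(s)=\cos\alpha_G(s)$ for every $s\in J$.

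Next I would upgrade this pointwise identity to a global sign relation $\alpha_G=\pm\alpha_F$. By \eqref{eq:alpha_f} both $\alpha_F$ and $\alpha_G$ take values in $(-\pi/2,\pi/2)\setminus\{0\}$, an interval on which $\cos$ is injective up to sign; so at each point either $\alpha_G(s)=\alpha_F(s)$ or $\alpha_G(s)=-\alpha_F(s)$. A priori the choice of sign could vary with $s$, and this is the one step requiring genuine care. The hard part will be ruling out such switching: since $\alpha_F$ never vanishes, the quotient $\alpha_G/\alpha_F$ is a continuous $\{+1,-1\}$-valued function on the domain $J$, which is connected whether $J=\mathbb I_l$ or $J=\mathbb T^1_l$; hence it is constant. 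Therefore either $\alpha_G\equiv\alpha_F$ or $\alpha_G\equiv-\alpha_F$ on all of $J$.

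Finally I would read off the conclusion from uniqueness. If $\alpha_G=\alpha_F$, then $F$ and $G$ are normal forms in $\mc D(C)$ sharing the base point $\mb x_0$ and the same first angular function, so the uniqueness part of Proposition~\ref{prop:U} yields $G=F$. If instead $\alpha_G=-\alpha_F$, then by \eqref{eq:D-dual} we have $\alpha_G=\alpha_{\check F}$; since Corollary~\ref{Prop:1} guarantees that $\check F$ is again a normal form with the same base point and base curve as $F$, the same uniqueness statement applied to $\check F$ gives $G=\check F$. Beyond the connectedness argument, the only point to verify is precisely that $\check F$ genuinely qualifies as a normal form based at $\mb x_0$ so that Proposition~\ref{prop:U} applies to it; with that in hand the remaining manipulations are routine.
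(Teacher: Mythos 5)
Your proof is correct and follows essentially the same route as the paper: reduce $\mu_F=\mu_G$ to $\cos\alpha_F=\cos\alpha_G$ via \eqref{eq:mu-f}, conclude $\alpha_G=\pm\alpha_F$, and invoke the fact that a normal form is determined by its base point and first angular function (with the $-$ sign identified as $\check F$ via \eqref{eq:D-dual}). The only difference is that you spell out two details the paper leaves implicit --- the connectedness argument ruling out sign switching of $\alpha_G/\alpha_F$, and the verification via Corollary~\ref{Prop:1} that $\check F$ is itself a normal form based at $\mb x_0$ --- both of which are correct and welcome additions.
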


\begin{proof}
Since $\mu_F=\mu_G$,
we have $\cos \alpha_F=\cos \alpha_G$, which implies
$\alpha_F=\pm \alpha_G$.
Since the normal form of a developable strip is determined by 
its  first angular function and its base point, 
we have $G=F$ or $G=\check F$.
\end{proof}

\begin{Cor}\label{cor:Tf}
Let $F\in \mc D(C)$ be a normal form.
If $T$ is a symmetry of $C$,
then $T\circ \check F$ is also a normal form giving
the dual of $T\circ F$.
\end{Cor}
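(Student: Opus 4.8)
The plan is to set $g:=T\circ F$ and $h:=T\circ \check F$, and reduce the statement to the single identity $h=\check g$. First I would record that both $g$ and $h$ are normal forms: by Proposition \ref{L0}, since $T$ is a symmetry of $C$, the compositions $T\circ F$ and $T\circ \check F$ belong to $\mc D(|C|)$ and are again normal forms (here one uses that $\check F$ is a normal form by (4) of Corollary \ref{Prop:1}). This immediately gives the first half of the claim, that $T\circ \check F$ is a normal form, so only $h=\check g$ remains.

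Next I would identify the two invariants that pin down $h$ among the normal forms sharing the base curve of $g$. For the base point, $g(0,0)=T(F(0,0))=T(\mb x_0)$ and $h(0,0)=T(\check F(0,0))=T(\mb x_0)$ (since $\check F(0,0)=F(0,0)=\mb x_0$), so $g(0,0)=h(0,0)$. For the geodesic curvature, Corollary \ref{Prop:1}(2) gives $\alpha_{\check F}=-\alpha_F$, whence $\check F$ has the same geodesic curvature function as $F$; combining this with the last assertion of Proposition \ref{L0} (that $T$ preserves the normalized geodesic curvature function) yields
\begin{equation*}
\hat\mu_{h}=\hat\mu_{\check F}=\hat\mu_{F}=\hat\mu_{g},
\end{equation*}
so $g$ and $h$ carry identical geodesic curvature functions. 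Applying Proposition \ref{fact:f} to the pair $g,h$ (to $C$ or to $-C$ according to whether $T$ preserves or reverses the orientation of $C$, using the corresponding version of the dual involution), I conclude that $h=g$ or $h=\check g$.

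Finally I would exclude the alternative $h=g$. Since $T$ is an isometry, hence injective, $h=g$ would force $\check F=F$, i.e.\ $\alpha_{\check F}=\alpha_F$; but $\alpha_{\check F}=-\alpha_F$ by \eqref{eq:D-dual}, so this would require $\alpha_F\equiv 0$, contradicting the fact that $\alpha_F\in C^\infty_{\pi/2}(J)$ takes values in $(-\pi/2,\pi/2)\setminus\{0\}$. Therefore $h=\check g$, that is, $T\circ \check F$ is the dual of $T\circ F$.

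The step I expect to be the main obstacle is the orientation bookkeeping rather than any computation: one must track whether $T$ sends $C$ to itself orientation-preservingly (so that $g,h\in\mc D(C)$ and Proposition \ref{fact:f} applies verbatim) or orientation-reversingly (so that $g,h\in\mc D(-C)$ and one invokes the $-C$ analogue, interpreting ``the dual of $T\circ F$'' inside $\mc D(-C)$). A useful sanity check here is the direct Frenet-frame computation: writing $\xi_F$ via \eqref{eq:f-1} and applying $dT$, an orientation-reversing isometry flips the binormal $\mb b$ of the image curve (torsion changes sign), so it negates the first angular function, whereas an orientation-preserving one leaves it fixed; in either case this is exactly compensated by the sign reversal $\alpha_{\check F}=-\alpha_F$, confirming that $T\circ\check F$ and $\check{(T\circ F)}$ share the same first angular function and hence coincide.
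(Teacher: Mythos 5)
Your proof is correct and takes essentially the same route as the paper: both establish that $T\circ F$ and $T\circ\check F$ are normal forms via Proposition \ref{L0}, match their base points and geodesic curvature functions, and then invoke the dichotomy of Proposition \ref{fact:f} to get $T\circ\check F=T\circ F$ or $T\circ\check F=$ the dual of $T\circ F$. The only difference is the final exclusion of the first alternative: the paper appeals to \eqref{eq:check} (the images of $T\circ F$ and $T\circ\check F$ meet only along $C$), whereas you use injectivity of $T$ together with $\alpha_{\check F}=-\alpha_F\neq 0$ — both are valid, and your explicit orientation bookkeeping in applying Proposition \ref{fact:f} along $C$ or $-C$ is if anything slightly more careful than the paper's.
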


\begin{proof}
Since $F$ is a normal form, so is $\check F$.
By Proposition \ref{L0}, $T\circ \check F$ also gives
a normal form. So we have
$$
\mu_{T\circ F}^{}=\mu_{F}^{}
=\mu_{\check F}^{}=\mu_{T\circ \check F}^{}
$$
and $T\circ F(0,0)=T\circ \check F(0,0)$.
Then Proposition \ref{fact:f} implies
$T\circ \check F$ coincides with
$T\circ F$ or its dual.
Since $T\circ \check F(\Omega_\epsilon)$ meets
$T\circ F(\Omega_\epsilon)$ only along $C$
(cf. \eqref{eq:check}), 
we obtain the conclusion.
\end{proof}

\begin{figure}[ht]
\begin{center}
\includegraphics[width=4.8cm]{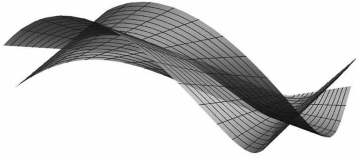}
\caption{The images of $F$ and its dual
given in Example \ref{ex:42}.}\label{fig:NUY}
\end{center}
\end{figure}

\begin{Example}\label{ex:42}
We fix a positive number $l>0$. Then
$$
\mb c_1(s)=\left(
\cos \left(\frac{s}{\sqrt{2}}\right), 
\sin \left(\frac{s}{\sqrt{2}}\right),
\frac{s}{\sqrt{2}}\right) \qquad \left(|s|\le \frac{l}{2}\right)
$$
gives a helix with arc-length parameter satisfying $\kappa=\tau=1/2$.
We set $C_1:=\mb c_1([-l/2,l/2])$.
We fix a constant $\alpha\in (-\pi/2,0)\cup (0,\pi/2)$
and let $F\in \mc D(C_1)$
be a normal form whose first angular function is identically equal to $\alpha$.
Then the dual $\check F$ of $F$ is
obtained by the $180^\circ$-rotation with respect to
the normal line of $C_1$ at the origin.
Figure~\ref{fig:NUY}
shows the images of $F$ and $\check F$.
\end{Example}

We consider the case that
$J=\mathbb I_l$.
Regarding
Lemma \ref{Aeq1}, we give the following:

\begin{Def}
The image $\Gamma$ of a regular curve $\gamma:\mathbb I_l\to \R^2$ 
parametrized by arc-length is called a {\it generator} of a
strip $f\in \mc D(C)$ ($C:=\mb c(\mathbb I_l)$)
if the curvature function of $\gamma(s)$
is equi-affine equivalent to the normalized geodesic curvature function 
$\hat \mu_{f}(s)$ ($s\in \mathbb I_l$) of $f$.
\end{Def}

We next consider the case that
$J=\mathbb T^1_l$.
Regarding
Lemma \ref{Aeq2}, we give the following:

\begin{Def}
The image $\Gamma$ of a regular curve 
$\gamma:\mathbb I_l\to \R^2$ parametrized by arc-length is 
called a {\it generator} of the 
strip $f\in \mc D(C)$ ($C:=\mb c(\mathbb T^1_l)$)
if the curvature function of $\gamma(s)$
can be smoothly extended as 
an $l$-periodic function 
which is the lift of a function which
is equi-affine equivalent to
the normalized curvature function
$\hat \mu_{f}(s)$ ($s\in  \mathbb T^1_l$) of $f$.
\end{Def}

A generator $\Gamma$ of $f$ has an ambiguity of isometric motions
in the plane $\R^2$. Since $f$ is a developable surface,
it can be developed to a plane, and the curve $C$ is deformed
to a plane curve which is congruent to $\Gamma$.
If $\Gamma$ has no self-intersections,
then $f$ can be obtained as a deformation of
a developable strip from a tubular neighborhood
of $\Gamma$ to the image of $f$.
According to \cite{FT0}, we define \lq\lq origami maps''
as follows:

\begin{Def}\label{def:p-ps}\rm
The {\it origami map} 
$\Phi_f$ induced by  $f\in \mc D(C)$
is defined by
$$
\Phi_f(u,v):=
\begin{cases}
f(u,v) & (v\ge 0), \\
\check f(u,v) & (v<0), 
\end{cases}
$$
where $\check f$ is the dual of $f$.
In this setting, $C$ is called the {\it crease} of $\Phi_f$,
and a generator $\Gamma$ of $f$ 
is called a {\it crease pattern} of the origami map $\Phi_f$.  
(Since the normalized curvature function $\mu$
is common in $f$ and $\check f$, $\Gamma$ gives a 
generator of $\check f$. When $J_0$ is a bounded closed 
interval, the congruence class of
the crease pattern of $\Phi_f$
is uniquely determined.)
\end{Def}

We set
$$
\mc O(C):=\{\Phi_f\,;\, f\in {\mc D}(C)\},\quad
\mc O_*(C):=\{\Phi_f\,;\, f\in {\mc D}_*(C)\}
$$
and
$$
\mc O(|C|):=\mc O(C)\cup \mc O(-C),\quad
\mc O_*(|C|):=\mc O_*(C)\cup \mc O_*(-C),
$$
which are the sets of origami maps and
the sets of {\it admissible origami maps}
along $C$ and $|C|$, respectively.
Figure \ref{fig:A} indicates 
the second angular functions $\beta$ and $\check \beta$ 
of $f$ and $\check f$, respectively. 
Here, 
$$
\check \Phi_{f}:=\Phi_{\check f}
$$
is called the {\it adjacent origami map}
with respect to $\Phi_f$. 
Obviously, the union of the images of $\Phi_f$ and $\check \Phi_f$
coincides with the union of the images of $f$ and $\check f$.

\begin{figure}[htb]
\begin{center}
 \begin{center}
   \includegraphics[width=0.3\linewidth]{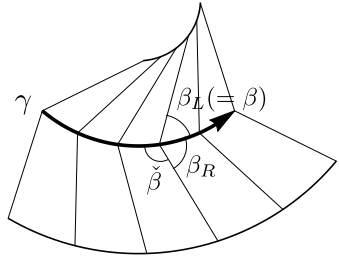} 
 \end{center}
\caption{Angular functions $\beta_L$ 
and $\beta_R$
on the crease pattern of $\Phi_f$.}\label{fig:S}
\end{center}
\end{figure}

The following fact is known:

\begin{Fact}[Fuchs-Tabachnikov \cite{FT0}, see also \cite{Ori}]
\label{1}
A curved folding $P$ along a curve $|C|$ 
satisfying {\rm (i),(ii),(iii)} and {\rm (iv)}
$($resp. {\rm (iv${}'$)}$)$ in the introduction
is realized as 
the image of $\Phi_F$ for a certain normal form $f\in {\mc D}(C)$
$($resp. $f\in {\mc D}_*(C))$.
Moreover, $\Gamma$ corresponds to the crease pattern
of $\Phi_f$.
\end{Fact}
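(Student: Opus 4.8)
The plan is to reconstruct the two developable sheets meeting along the crease and to recognise them as a strip $f\in\mc D(C)$ together with its dual $\check f$. First I would fix the local picture furnished by (i)--(iii): near $C$ a curved folding $P$ is the union of two flat sheets $S_+$ and $S_-$ glued along the common fold curve $C$, each of them developable and hence ruled. Since the folding is an intrinsic isometry of the flat paper onto $P$, the geodesic curvature of $C$ measured inside either sheet equals the plane curvature of $\Gamma$ under the arc-length correspondence of (i); by (ii) this curvature is nowhere zero, so $C$ is nowhere asymptotic on $S_\pm$ and the rulings meet $C$ transversally. Each $S_\pm$ is therefore a developable strip along $C$ in the sense of Section~1, and after selecting normalized ruling vector fields subject to \eqref{eq:xi-n}, \eqref{eq:alpha_f} and \eqref{eq:beta_f2} I obtain two strip germs $f_\pm$, which by (iii) are embeddings near $C$. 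Replacing $C$ by $-C$ if necessary, I may take $f_\pm\in\mc D(C)$.

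Next I would make the curvature comparison precise. Writing $\mu$ for the curvature function of $\Gamma$, the intrinsic identification above gives $\hat\mu_{f_+}=\hat\mu_{f_-}=\mu$. Combining this with \eqref{eq:mu-f}, i.e.\ $\mu_f=\kappa_f\cos\alpha_f$, and with hypothesis (iv) written in the equivalent form \eqref{eq:alpha_f20}, I obtain $0<|\cos\alpha_{f_\pm}|<1$; thus $\alpha_{f_\pm}\in C^\infty_{\pi/2}(J)$, confirming $f_\pm\in\mc D(C)$. If instead the stronger condition (iv${}'$), that is \eqref{eq:alpha_f2B}, is assumed, the identical estimate places $f_\pm$ in $\mc D_*(C)$.

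Finally I would pin down the relation between the two sheets. Put $f:=f_+$ in its normal form $F$ associated to the base point $\mb x_0$ (Proposition~\ref{prop:fF}), and let $G$ be the normal form of $f_-$ normalized so that $F(0,0)=G(0,0)=\mb x_0$. Since $\mu_F=\mu_G=\mu$, Proposition~\ref{fact:f} forces $G=F$ or $G=\check F$. The alternative $G=F$ would make the two sheets share their ruling directions along $C$, so that the tangent planes of $S_+$ and $S_-$ coincide there and $P$ is smooth across $C$, contradicting that $C$ is a genuine fold; equivalently, by Lemma~\ref{lem:Tv} the two rulings are linearly independent exactly when the first angular functions differ, and $\alpha_G=-\alpha_F$ is then the only admissible possibility. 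Hence $G=\check F$, and the image of $P$ is the union of the images of $F$ and $\check F$, which is precisely the image of $\Phi_F$ by Definition~\ref{def:p-ps}; since the curvature of $\Gamma$ is equi-affine equivalent to $\hat\mu_F$, the curve $\Gamma$ is a generator of $F$ and hence the crease pattern of $\Phi_F$. The main obstacle is the first step---turning the a priori merely geometric notion of a curved folding into the strip formalism, and in particular verifying that each sheet is ruled with rulings transverse to $C$ so that $f_\pm\in\mc D(C)$---after which the identification of $f_-$ with $\check f$ follows at once from Proposition~\ref{fact:f}.
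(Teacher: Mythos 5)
You should first be aware that the paper does not prove this statement at all: it is imported as a Fact from Fuchs--Tabachnikov \cite{FT0} (see also \cite{Ori}), and the only commentary the paper attaches is the subsequent Remark recording the dictionary -- (ii) corresponds to $\cos\alpha_F>0$, (iv) (resp.\ (iv$'$)) to $\cos\alpha_F\ne 1$ (resp.\ $\max|\cos\alpha_F|<1$), and $\mu_F$ equals the curvature of the arc-length parametrization of $\Gamma$. So there is no in-paper proof to compare against; judged on its own terms, your reconstruction follows the standard (and essentially the only) route, and its skeleton is sound: extend each sheet across $C$ to a strip in $\mc D(C)$ with the normalizations \eqref{eq:xi-n}, \eqref{eq:alpha_f}, \eqref{eq:beta_f2}; use invariance of geodesic curvature under the intrinsic isometry to get $\mu_F=\mu_G$; invoke Proposition~\ref{fact:f} to force $G=F$ or $G=\check F$; and exclude $G=F$ because then the two sheets would join into a single smooth strip, so $\Gamma$ would not be the singular set of the folding map.

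Two steps need repair. First, your transversality claim is misattributed. Condition (ii) (nonvanishing curvature of $\Gamma$) does \emph{not} prevent $C$ from being asymptotic on a sheet: a curve lying in a planar piece of paper has nonzero geodesic curvature and identically zero normal curvature. What rules out asymptotic contact is (iv): along a curve on a flat sheet one has $\kappa^2=\mu^2+\kappa_n^2$, so the strict inequality $|\mu|<\kappa$ forces $\kappa_n\ne 0$, hence the ruling (asymptotic) direction is transverse to $C$ and $\sin\beta_f\neq 0$. Since (iv) is among the hypotheses the step is repairable, but as written it is false, and it is exactly the step you yourself single out as the main obstacle. Second, your final identification is imprecise in a way that hides a genuine point: the image of $\Phi_F$ is $F(\Omega^+_{\epsilon})\cup\check F(\Omega^-_{\epsilon})$, a union of \emph{half}-strips, not ``the union of the images of $F$ and $\check F$.'' Having shown $G=\check F$, you must still know that the second sheet occupies the $v\le 0$ side of $G$ rather than the $v\ge 0$ side; otherwise the union would be $F(\Omega^+_{\epsilon})\cup\check F(\Omega^+_{\epsilon})$, which is not the image of any origami map. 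This is where the paper's sign convention does the work: the sheet $S_-$ develops onto the right-hand side of $\Gamma$, so its geodesic curvature with respect to the co-normal pointing \emph{into} $S_-$ is $-\mu$; since the normalized co-normal $\mb N_G$ (the one with $\mb N_G\cdot\xi_G>0$, giving $\mu_G>0$) must therefore point away from $S_-$, the sheet automatically sits on the $v\le 0$ side of its normalized extension. Adding these two clarifications makes your argument complete and consistent with the conventions of Section~1.
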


\begin{Remark}
More precisely,
the condition (ii) in the introduction
implies $\cos \alpha_F >0$ and
(iv) (resp. (iv$'$)) in the introduction corresponds to
the condition $\cos \alpha_F \ne 1$
(resp. $\dy\max_{u\in J_0}|\cos \alpha_F(u)| <1$).
Moreover, $\mu_F$ coincides with the curvature function 
with respect to the
arc-length parametrization 
$\gamma:J\to \R^2$ of the generator $\Gamma$.
We set $\beta_L:=\beta$ and 
$\beta_R:=\pi-\check \beta$,
where $\beta$ and $\check \beta$ are the second angular 
functions of $F$ and $\check F$, respectively.
Then $\beta_L$ (resp. $\beta_R$) gives the left-ward (resp. right-ward)
angular function of the ruling direction from the tangential direction
$\gamma'(s)$ in the plane $\R^2$,
see Figure \ref{fig:S}. 
\end{Remark}

Here, we set
\begin{align*}
\mc O(\Gamma,|C|)&:=\{\Phi_f\,; \, \text{the generator of 
$f\in {\mc D}(|C|)$ is $\Gamma$}\}, \\
\mc O_*(\Gamma,|C|)&:=\{\Phi_f\,;\, 
\text{the generator of $f\in {\mc D}_*(|C|)$ is $\Gamma$}\}
\end{align*}
and
\begin{equation} \label{eq:o2}
\mc P(\Gamma,C):=\{\op{Im}(\Phi_f)\,; \, f\in \mc O(\Gamma,C)\},\quad
\mc P_*(\Gamma,C):=\{\op{Im}(\Phi_f)\,; \, f\in \mc O_*(\Gamma,C)\},
\end{equation} 
where $\op{Im}(\Phi_f)$ is the image of the strip germ $\Phi_f$ along $C$.
Then $\mc P(\Gamma,C)$ can be considered as
the set of curved foldings whose crease and crease pattern are $C$
and $\Gamma$, respectively. 
The set $\mc P_*(\Gamma,C)$ consists of admissible curved foldings
along $C$ defined as in the introduction.
The following assertion is obvious by the definition of
the map $\Phi:{\mc D}(|C|)\to \mc O(|C|)$ and 
Lemma~\ref{lem:L}.

The following assertion implies that
we can fold a pair 
$
\Phi_F(\Omega_\epsilon),\,
\check \Phi_F(\Omega_\epsilon)
$
of curved foldings at the same time whenever 
$\epsilon(>0)$ is sufficiently small:

\begin{Proposition}\label{prop:220}
For each normal form $F\in {\mc D}(|C|)$, 
\begin{equation}\label{eq:check}
F(\Omega_\epsilon)\cap \check F(\Omega_\epsilon)=C
\end{equation}
holds. Moreover,  the induced origami map $\Phi_F$ has
no self-intersections.
\end{Proposition}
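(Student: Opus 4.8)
The plan is to establish the two assertions of Proposition \ref{prop:220} separately, deriving both from structural results already proved in the excerpt. The equation \eqref{eq:check} is the geometric core, and the no-self-intersection claim for $\Phi_F$ will follow once we understand how the images of $F$ and $\check F$ fit together along $C$.

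First I would prove \eqref{eq:check}. By Corollary \ref{Prop:1}(2) we have $\alpha_{\check F}(u)=-\alpha_F(u)$, and since $F\in\mc D(|C|)$ satisfies \eqref{eq:alpha_f} we know $0<|\alpha_F(u)|<\pi/2$, so $\alpha_F$ never vanishes. Hence the difference $\alpha_{\check F}-\alpha_F=-2\alpha_F$ has no zeros on $J$. By Lemma \ref{lem:Tv}, the normalized ruling vector fields of $F$ and $\check F$ are therefore linearly independent at each point of $C$. Now I would invoke Proposition \ref{lem:key}, which states precisely that when the normalized ruling vector fields of two normal forms (sharing the base point) are everywhere linearly independent, their images $F(\Omega_\epsilon)$ and $\check F(\Omega_\epsilon)$ meet only along $C$ for sufficiently small $\epsilon>0$. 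This gives \eqref{eq:check} immediately. (When $F\in\mc D(-C)$ rather than $\mc D(C)$, one first passes to $F^\sharp$ via Remark \ref{rmk:c-sharp}, as is done in the proofs of Propositions \ref{prop:U} and \ref{lem:pm}.)

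The harder part, and where I expect the main obstacle, is showing that $\Phi_F$ itself has no self-intersections. Recall from Definition \ref{def:p-ps} that $\Phi_F$ equals $F$ on $v\ge 0$ and $\check F$ on $v<0$. A self-intersection of $\Phi_F$ would be a coincidence $\Phi_F(s_1,v_1)=\Phi_F(s_2,v_2)$ with distinct arguments. I would split into cases according to the signs of $v_1,v_2$. If both are $\ge 0$ we are asking whether $F$ (a $C^\infty$-embedding by definition of developable strip) is injective, which holds on a sufficiently small tubular neighborhood; similarly if both are $<0$ we use that $\check F$ is an embedding. The genuinely new case is $v_1\ge 0>v_2$, i.e. a point in the image of $F|_{\Omega_\epsilon^+}$ coinciding with a point in the image of $\check F|_{\Omega_\epsilon^-}$. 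This is exactly the configuration controlled by Proposition \ref{lem:pm}, which asserts that $F(\Omega^+_\epsilon)$ does not meet $G(\Omega^-_\epsilon)$ for any two normal forms $F,G\in\mc D(|C|)$ and sufficiently small $\epsilon$; applying it with $G=\check F$ rules out this mixed self-intersection.

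Thus the whole proof assembles as follows: reduce to the case $F\in\mc D(C)$; apply \eqref{eq:D-dual} and \eqref{eq:alpha_f} to see $\alpha_{\check F}-\alpha_F$ is zero-free; invoke Lemma \ref{lem:Tv} for linear independence of rulings, then Proposition \ref{lem:key} for \eqref{eq:check}; finally combine the embeddedness of $F$ and $\check F$ on each half with Proposition \ref{lem:pm} (taking $G=\check F$) to exclude self-intersections of $\Phi_F$ across the crease. The only subtlety requiring care is choosing $\epsilon$ small enough simultaneously for all the invoked statements, but since each supplies its own threshold and there are finitely many, taking the minimum suffices.
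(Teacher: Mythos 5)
Your proof is correct, and for the first assertion it coincides with the paper's: the paper asserts the linear independence of the rulings of $F$ and $\check F$ without comment and then cites Proposition \ref{lem:key}, while you supply the justification the paper leaves implicit, namely \eqref{eq:D-dual} together with \eqref{eq:alpha_f} and Lemma \ref{lem:Tv}. For the second assertion your route differs in which tool carries the weight. The paper does not invoke Proposition \ref{lem:pm} here at all: it argues in one line that a self-intersection of $\Phi_F$ (which, by embeddedness of $F$ and $\check F$ separately, must pair a point of the upper half-strip with a point of the lower one) would by \eqref{eq:check} force the common point to lie on $C$, which is impossible because an embedding cannot send a point with $v\neq 0$ into $C=F(J\times\{0\})$. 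You instead dispose of the mixed case by Proposition \ref{lem:pm} with $G=\check F$; this is equally valid --- indeed it is precisely how the paper itself deploys Proposition \ref{lem:pm} in the proof of Proposition \ref{Prop:Phi1} --- and it has the mild advantage that the second assertion then no longer depends on \eqref{eq:check}, whereas the paper's argument is shorter and needs no limiting lemma. One pedantic point: you phrase the mixed case as $v_1\ge 0>v_2$, but Proposition \ref{lem:pm} only covers $v_1>0$; the boundary sub-case $v_1=0$ (a crease point equal to $\check F(s_2,v_2)$ with $v_2<0$) should be excluded by injectivity of $\check F$, using $F(s,0)=\check F(s,0)$. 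That is a one-line fix, not a gap.
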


\begin{proof}
Since the ruling vector of $F$
is linearly independent of $\check F$
at each point of $C$,
the first assertion follows from 
Proposition \ref{lem:key}.
So we prove the second assertion.
If not, \eqref{eq:check} implies
that $F(\Omega_\epsilon^+)$ must 
meet $F(\Omega_\epsilon^-)$,
which is impossible.
\end{proof}

\begin{Proposition}\label{Prop:Phi1}
Let $F,G\in {\mc D}(|C|)$ be normal forms.
If $\Phi_F(\Omega_{\epsilon})$
coincides with $\Phi_G(\Omega_{\epsilon})$
for sufficiently small $\epsilon$,
then $F$ is right equivalent to $G$.
\end{Proposition}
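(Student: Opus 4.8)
The plan is to deduce the equality of the two full origami images from the equality of their \emph{upper wings}, namely $F(\Omega^+_\epsilon)=G(\Omega^+_\epsilon)$, and then to invoke Proposition \ref{fact:1}. First I would record, directly from Definition \ref{def:p-ps} together with $F(J\times\{0\})=C$, the decomposition
$$
\Phi_F(\Omega_\epsilon)=F(\Omega^+_\epsilon)\cup C\cup \check F(\Omega^-_\epsilon),
$$
and the analogous one for $G$. Since $F$ is an embedding, injectivity gives $F(\Omega^+_\epsilon)\cap C=\emptyset$ once $\epsilon$ is small enough, and likewise for $G$.

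The key step is to prove $F(\Omega^+_\epsilon)\subseteq G(\Omega^+_\epsilon)$. I would take $p\in F(\Omega^+_\epsilon)$; by hypothesis $p\in \Phi_G(\Omega_\epsilon)=G(\Omega^+_\epsilon)\cup C\cup \check G(\Omega^-_\epsilon)$, and $p\notin C$ by the previous paragraph. The crucial observation is that $\check G$ is again a normal form lying in $\mc D(|C|)$ (Corollary \ref{Prop:1}), so Proposition \ref{lem:pm} applies to the pair $(F,\check G)$ and yields $F(\Omega^+_\epsilon)\cap \check G(\Omega^-_\epsilon)=\emptyset$; hence $p\notin \check G(\Omega^-_\epsilon)$, forcing $p\in G(\Omega^+_\epsilon)$. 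Exchanging the roles of $F$ and $G$ (and applying Proposition \ref{lem:pm} to $(G,\check F)$) gives the reverse inclusion, so $F(\Omega^+_\epsilon)=G(\Omega^+_\epsilon)$. The equivalence of conditions (1) and (3) in Proposition \ref{fact:1} then shows that $F$ is right equivalent to $G$.

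The main subtlety, and the point where Proposition \ref{lem:pm} does the real work, is ruling out the a priori possibility that the upper sheet of one folding coincides (even partly) with the \emph{lower} sheet of the other, i.e. that $F(\Omega^+_\epsilon)$ meets $\check G(\Omega^-_\epsilon)$; this is exactly the disjointness supplied by Proposition \ref{lem:pm} for $(F,\check G)$, and it is why no case distinction is needed. The only bookkeeping issue will be uniformity of $\epsilon$: the embedding condition and the two instances of Proposition \ref{lem:pm} each hold on some interval $(0,\epsilon_0)$, so choosing $\epsilon$ below the minimum of these finitely many thresholds (and small enough that $\Phi_F(\Omega_\epsilon)=\Phi_G(\Omega_\epsilon)$) makes all of them hold at once.
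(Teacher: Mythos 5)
Your proof is correct and follows essentially the same route as the paper's: both decompose $\Phi_F(\Omega_\epsilon)$ and $\Phi_G(\Omega_\epsilon)$ into upper and lower wings, invoke Proposition \ref{lem:pm} on the pair $(F,\check G)$ to kill the cross-intersection $F(\Omega^+_\epsilon)\cap\check G(\Omega^-_\epsilon)$, deduce $F(\Omega^+_\epsilon)=G(\Omega^+_\epsilon)$, and finish with Proposition \ref{fact:1}. The only differences are cosmetic: the paper first reduces to $F,G\in\mc D(C)$ by replacing with $F^\sharp,G^\sharp$ (a step your argument avoids, legitimately, since Propositions \ref{lem:pm} and \ref{fact:1} are stated for $\mc D(|C|)$), and you spell out the symmetric inclusion that the paper leaves implicit.
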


\begin{proof}
We denote by $F,G$ the normal forms associated with $f,g$ 
satisfying $F(0,0)=G(0,0)$,
respectively. 
It is sufficient to show the assertions hold for $F$ and $G$.
We suppose $\Phi_F(\Omega_{\epsilon})=\Phi_G(\Omega_{\epsilon})$.
By replacing $F$ (resp. $G$) by $F^\sharp$ (resp. $G^\sharp$), 
Proposition \ref{L0} yields that
$F,G\in {\mc D}(C)$ without loss of generality.
Then we have
$
F(\Omega^+_\epsilon)\cup \check F(\Omega^-_\epsilon)
=G(\Omega^+_\epsilon)\cup \check G(\Omega^-_\epsilon).
$
By Proposition \ref{lem:pm},
$F(\Omega^+_\epsilon)\cap \check G(\Omega^-_\epsilon)$
is the empty set, and so we have
$F(\Omega^+_{\epsilon})=G(\Omega^+_{\epsilon})$.
By Proposition \ref{fact:1}, we can conclude that
$F=G$. 
\end{proof}

We now the following:

\begin{Theorem}\label{prop:introfunctor}
The map 
$
\Phi:{\mc D}(|C|) \ni f \mapsto \Phi_f\in \mc O(|C|)
$
has the following properties:
\begin{enumerate}
\item $f,\,g\in {\mc D}(|C|)$ are right equivalent if and only if
$\Phi_F(\Omega_\epsilon)$ coincides with
$\Phi_G(\Omega_\epsilon)$,
where $F,G$ are normal forms associated with $f$ and $g$ 
satisfying $F(0,0)=G(0,0)=\mb x_0$, respectively.
\item If $\Phi_f$ and $\Phi_g$ $(f,\,g\in {\mc D}(|C|))$
have the same crease pattern,
then $f$ and $g$ are geodesically equivalent.
\item For each $f\in {\mc D}(|C|)$,
      the crease pattern of $\check \Phi(f)(=\Phi(\check f))$
      coincides with that of $\Phi(f)$.
\item Let $T$ be a symmetry of $C$,
then $T\circ \Phi_F=\Phi_{T\circ F}$ holds for each 
normal form $F\in {\mc D}(|C|)$.
\item Let $f,\,g\in {\mc D}(|C|)$.
If $\Phi_F(\Omega_\epsilon)$ 
is congruent to $\Phi_G(\Omega_\epsilon)$,
then there exists a symmetry $T$ of $C$ such that
$g$ is right equivalent to $T\circ f$.
\end{enumerate}
\end{Theorem}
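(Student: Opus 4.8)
The plan is to prove the five assertions of Theorem~\ref{prop:introfunctor} largely by assembling the propositions already established in Sections 1 and 2. Each part corresponds to a structural feature of the map $\Phi$, and the heavy lifting (the non-self-intersection estimates and the normal-form uniqueness) has been packaged into Propositions~\ref{fact:1}, \ref{lem:key}, \ref{lem:pm}, \ref{prop:220}, and \ref{Prop:Phi1}.

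\medskip
\noindent\textbf{Assertions (1), (2) and (3).}
For (1), the forward direction is exactly Proposition~\ref{Prop:Phi1} read in reverse together with Proposition~\ref{fact:1}: if $f,g$ are right equivalent, then their associated normal forms satisfy $F=G$ or $F=G^\sharp$ by Proposition~\ref{fact:1}(2), and in either case the images $\Phi_F(\Omega_\epsilon)$ and $\Phi_G(\Omega_\epsilon)$ coincide because $\Phi$ depends only on the image of the strip and its dual. For the converse, I would invoke Proposition~\ref{Prop:Phi1} directly, which states precisely that equality of the origami-map images forces right equivalence of $F$ and $G$, hence of $f$ and $g$. For (2), if $\Phi_f$ and $\Phi_g$ share a crease pattern $\Gamma$, then by definition of a generator the normalized geodesic curvature functions $\hat\mu_f$ and $\hat\mu_g$ are each equi-affine equivalent to the curvature function of $\gamma$, hence to each other; this is exactly the definition of geodesic equivalence (Definition~\ref{Def:Eqe1}). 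Assertion (3) is immediate from Definition~\ref{def:p-ps}: the crease pattern of $\Phi_f$ is a generator of $f$, and since $\check f$ has the same normalized geodesic curvature function as $f$ (by Corollary~\ref{Prop:1}(2) and \eqref{eq:D-dual}, which give $\alpha_{\check f}=-\alpha_f$ and hence $\mu_{\check f}=\mu_f$), the curve $\Gamma$ is simultaneously a generator of $\check f$, so $\Phi_{\check f}$ has the same crease pattern.

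\medskip
\noindent\textbf{Assertion (4).}
This is essentially a naturality statement for $\Phi$ under an ambient symmetry. Given a symmetry $T$ of $C$, I would argue that applying $T$ commutes with forming the dual and splicing. By Proposition~\ref{L0}, $T\circ F$ is again a normal form with the same normalized geodesic curvature function, and by Corollary~\ref{cor:Tf}, $T\circ\check F$ is a normal form giving the dual of $T\circ F$, i.e.\ $\widecheck{(T\circ F)}=T\circ\check F$. Then the defining case split of $\Phi$ in Definition~\ref{def:p-ps} gives, for $v\ge 0$, $\Phi_{T\circ F}(s,v)=(T\circ F)(s,v)=T\circ\Phi_F(s,v)$, and for $v<0$, $\Phi_{T\circ F}(s,v)=\widecheck{(T\circ F)}(s,v)=(T\circ\check F)(s,v)=T\circ\Phi_F(s,v)$; combining the two halves yields $T\circ\Phi_F=\Phi_{T\circ F}$.

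\medskip
\noindent\textbf{Assertion (5) --- the main obstacle.}
This is the step I expect to require genuine work rather than bookkeeping. The hypothesis is that $\Phi_F(\Omega_\epsilon)$ is \emph{congruent} to $\Phi_G(\Omega_\epsilon)$, so there is an isometry $T$ of $\R^3$ with $T\circ\Phi_F(\Omega_\epsilon)=\Phi_G(\Omega_\epsilon)$. The plan is first to show that $T$ must carry $C$ to $|C|$: since $C$ is the crease, i.e.\ the singular-set image of the origami map, and isometries preserve the singular set, $T(C)=C$ or $T(C)=-C$, so $T$ restricts to a symmetry of $C$ in the sense of Definition~\ref{def:S}. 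Then $T\circ F$ is a normal form by Proposition~\ref{L0}, and by assertion (4) already proved, $\Phi_{T\circ F}=T\circ\Phi_F$, whence $\Phi_{T\circ F}(\Omega_\epsilon)=\Phi_G(\Omega_\epsilon)$. Applying assertion (1) to the pair $T\circ F$ and $G$, I conclude that $T\circ f$ is right equivalent to $g$, which is the desired statement. The delicate point here is the very first claim: justifying that the congruence $T$ necessarily preserves $|C|$, which rests on the fact that the crease is an intrinsic feature of the image of $\Phi_F$ (the non-smooth locus of the folded surface) and so is respected by any ambient isometry identifying the two images. Making this rigorous requires knowing that $C$ is exactly the singular set of $\Phi_F$ as a subset of $\R^3$, which follows because $\Phi_F$ fails to be an immersion precisely along $C$ (the two strips $F$ and $\check F$ meet there with distinct ruling directions by Proposition~\ref{prop:220}), and this characterization is isometry-invariant.
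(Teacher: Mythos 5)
Your proposal is correct and follows essentially the same route as the paper's proof: parts (1)--(3) reduce to Proposition~\ref{Prop:Phi1} together with the definitions of generator, geodesic equivalence and dual, part (4) rests on Corollary~\ref{cor:Tf}, and part (5) conjugates by the ambient isometry and reapplies Proposition~\ref{Prop:Phi1}. Your only real addition is that in (5) you make explicit why the isometry $T$ must preserve $|C|$ (the crease being the non-smooth locus of the image, by the transversality behind Proposition~\ref{prop:220}), a point the paper's proof leaves implicit when it writes $T\circ \Phi_G(\Omega_\epsilon)=\Phi_{T\circ G}(\Omega_\epsilon)$.
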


Before proving this assertion,
we prepare the following lemma:

We prepare the following:

\begin{Lemma}\label{lem:L}
Let $C$ be a non-closed space curve 
$($i.e. $J_0$ is
a bounded closed interval$)$, and
let $\Gamma$ be a simple closed arc in $\R^2$
which is a generator of a developable strip $F\in {\mc D}(C)$
written in a normal form.
Then the following two assertions are equivalent:
\begin{itemize}
\item $\Gamma$ has a symmetry $($cf. Definition \ref{def:S}$)$,
\item the geodesic curvature function $\mu_F$ 
of $F$ has a symmetry.
\end{itemize}
\end{Lemma}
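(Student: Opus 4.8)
The plan is to reduce the statement to a comparison between the (positive) curvature function of an arc-length parametrization $\gamma\colon\mathbb I_l\to\R^2$ of $\Gamma$ and $\mu_F$, and then to exploit the fundamental theorem of plane curves together with the behaviour of the signed curvature under isometries. First I would record that, since $F$ is written in a normal form, Proposition~\ref{prop:fF} gives $\hat\mu_F=\mu_F$, and that by the definition of a generator the curvature function $\mu$ of $\gamma$ is equi-affine equivalent to $\mu_F$. Because an equi-affine change of variable $\phi(u)=\sigma u+d$ conjugates the involution $u\mapsto b+c-u$ to another orientation-reversing affine involution of the same interval, the property of admitting a symmetry in the sense of Definition~\ref{ft-sym} is invariant under equi-affine equivalence; hence I may choose the representative so that $\mu=\mu_F$ on $\mathbb I_l=[-l/2,l/2]$, where a symmetry of $\mu_F$ is precisely the relation $\mu(s)=\mu(-s)$.

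For the implication that a symmetry of $\Gamma$ forces a symmetry of $\mu_F$, I would take a nonidentity isometry $T$ with $T(\Gamma)=\Gamma$ and observe that $T\circ\gamma$ is again an arc-length parametrization of the embedded arc $\Gamma$. Since $\Gamma$ has no self-intersections, $T\circ\gamma(s)=\gamma(\epsilon s+d)$ for some $\epsilon\in\{1,-1\}$ and $d\in\R$, and the requirement that this reparametrization preserve the symmetric interval $[-l/2,l/2]$ pins down $d=0$. The case $\epsilon=1$ would force $T$ to fix $\Gamma$ pointwise, which is impossible for a nonidentity isometry because $\Gamma$ spans a two-dimensional affine subspace (its curvature is positive); so $\epsilon=-1$ and $T\circ\gamma(s)=\gamma(-s)$. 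Comparing signed curvatures of the two sides---the left side carries the signed curvature of $\gamma$ by the factor $\det(dT)\in\{1,-1\}$, while reversing the parameter sends it to $-\mu(-s)$---and using $\mu>0$ shows that $T$ must be orientation reversing and yields $\mu(s)=\mu(-s)$, i.e.\ a symmetry of $\mu_F$.

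For the converse I would mimic the construction used in the proof of Lemma~\ref{Aeq1}. Assuming $\mu(s)=\mu(-s)$, set $\delta(s):=S\circ\gamma(-s)$ for a reflection $S$ of $\R^2$; then $\delta$ is arc-length parametrized and a direct sign computation shows that its signed curvature equals that of $\gamma$. The fundamental theorem of plane curves (\cite[Chapter~2]{UY-book}) then provides an orientation-preserving isometry $R$ with $R\circ\gamma=\delta$, and $T:=S^{-1}R$ is an orientation-reversing (hence nonidentity) isometry satisfying $T\circ\gamma(s)=\gamma(-s)$. Consequently $T(\Gamma)=\gamma([-l/2,l/2])=\Gamma$, so $T$ is a symmetry of $\Gamma$ in the sense of Definition~\ref{def:S}.

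I expect the main obstacle to be the careful sign bookkeeping in the middle step: tracking how the signed curvature changes under an orientation-reversing isometry versus under the parameter reversal $s\mapsto-s$, and using the positivity of the geodesic curvature $\mu_F$ to conclude that any symmetry of $\Gamma$ is necessarily a negative (orientation-reversing) symmetry. The other delicate point, which I would want to state explicitly, is that the reparametrization induced by $T$ carries no translation part; this is exactly where the symmetric choice of interval (equivalently, the base point $\mb x_0$ being the midpoint of $C$) enters.
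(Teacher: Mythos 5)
Your proof is correct and follows essentially the same route as the paper: the paper disposes of this lemma in one line by citing Lemma~\ref{Aeq1}, whose proof is precisely the fundamental-theorem-of-plane-curves argument (the reflected curve $S\circ\gamma(-s)$, sign bookkeeping via positivity of the curvature, and the dichotomy between orientation-preserving and orientation-reversing isometries) that you carry out directly in the case $\gamma_1=\gamma_2=\gamma$. If anything, your version is more complete: applied literally with $\gamma_1=\gamma_2$, both conditions of Lemma~\ref{Aeq1} become vacuously true (take $T$ and $\phi$ to be identities), so the non-identity bookkeeping you make explicit --- pinning down $d=0$ from the symmetric interval, and excluding $\epsilon=1$ because an isometry fixing a curve of positive curvature pointwise must be the identity --- is exactly the content that the paper's one-line citation leaves implicit.
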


\begin{proof}
Since $\hat \mu_F$ coincides with the curvature function of $\Gamma$,
the conclusion follows from  Lemma~\ref{Aeq1}.
\end{proof}

\begin{proof}[Proof of Theorem \ref{prop:introfunctor}]
We denote by $F,G$ the normal forms associated with $f,g$
satisfying $F(0,0)=G(0,0)$,  respectively. 
It is sufficient to show the assertions hold for $F$ and $G$.
Then $F$ is defined on a tubular
neighborhood of $J\times \{0\}$ in $J\times \R$.
If $F,G$ are right equivalent, then 
it is obvious that the images of $\Phi_F,\Phi_G$ coincide.
The converse of this assertion follows 
from Proposition~\ref{Prop:Phi1}.

We now prove (2).
In the case that $J$ is a bounded closed interval,
 (2) follows from Lemma~\ref{lem:L}.
We then consider the case that 
$J$ is a one dimensional torus $\mathbb T_l$ ($l>0$).
If $\Phi_F$ and $\Phi_G$ have the 
same crease pattern $\Gamma$, then the
curvature function of $\Gamma$ can be 
extended as a smooth $l$-periodic function on $\R$,
and coincides with the lift of common normalized
geodesic curvature function of $F$ and $G$.
So $F$ and $G$ are geodesically equivalent.

On the other hand, (3) is obvious from the definition of
$\check \Phi(F)$.

We next prove (4). Let $T$ be a symmetry of $C$,
then, by Corollary \ref{cor:Tf}, we have
$$
T\circ \Phi_F(\Omega_\epsilon)=
T\circ F(\Omega^+_\epsilon)\cup 
T\circ \check F(\Omega^-_\epsilon)=\Phi_{T\circ F}(\Omega_\epsilon).
$$
So $T\circ \Phi_F$ is right equivalent to $\Phi_{T\circ F}$
by (1). Since $F$ is a normal form, we have
$T\circ \Phi_F=\Phi_{T\circ F}$.

Finally, we prove (5).
Suppose that $\Phi_F(\Omega_\epsilon)$ 
is congruent to $\Phi_G(\Omega_\epsilon)$.
Then there exists an isometry $T$ of $\R^3$ such that
$T\circ \Phi_G(\Omega_\epsilon)$ coincides with
$\Phi_F(\Omega_\epsilon)$.
Since $T\circ \Phi_G(\Omega_\epsilon)=\Phi_{T\circ G}(\Omega_\epsilon)$,
Proposition \ref{Prop:Phi1} implies that
$T\circ G$ is right equivalent to $F$.
So we obtain (5).
\end{proof}

\section{The inverses and inverse duals}

In this section, we set $J=\mathbb I_l(=[-l/2,l/2])$ ($l>0$), 
that is, $C$ is a non-closed space curve.
Let $I:=[b,c]$ ($b<c$)
be a closed bounded interval on $\R$.

\begin{Proposition}\label{cor:C2}
Let $f:U\to \R^3$ be a developable strip belonging to $\mc D_*(C)$,
where $U$ is a tubular neighborhood of 
$I\times \{0\}\, (\subset I\times \R)$.
Then there exist a tubular 
neighborhood $V(\subset U)$ of $I\times \{0\}$
in $I\times \R$ and two maps
$
f_*,\check f_*:V\to \R^3
$
such that
\begin{enumerate}
\item $f_*$ and $\check f_*$ belong to $\mc D_*(-C)$,
\item $f_*(u,0)=\check f_*(u,0)=f(-u,0)$ for each $u\in I$,
\item the first angular function $\alpha_{f_*}$
takes the same sign as $\alpha_f$ and
satisfies
\begin{equation}\label{eq:as}
\kappa_f(b+c-u)\cos \alpha_{f_*}(u)
=\kappa_f(u)\cos\alpha_{f}(u), 
\end{equation}
where
$\mb c_f(u):=f(u,0)$ $(u\in I)$ and
 $\kappa_f(u)$ is its curvature function,
\item $\alpha_{\check f_*}(u)=-\alpha_{f_*}(u)$,
\item $f_*$ and $\check f_*$ are normal forms
if $f$ is.
\end{enumerate}
Moreover, such two maps $f_*$ and $\check f_*$
are uniquely determined from $f$ as map germs. 
\end{Proposition}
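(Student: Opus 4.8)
The plan is to obtain $f_*$ and $\check f_*$ as the two strips produced by Proposition~\ref{prop:Equi} when the compatible curve is taken to be $-C$ itself. Concretely, I would use the reverse parametrization $\tilde{\mb c}(u):=f(b+c-u,0)$ of $-C$ on $I=[b,c]$, whose curvature and torsion functions are $\tilde\kappa(u)=\kappa_f(b+c-u)$ and $\tilde\tau(u)=\tau_f(b+c-u)$ by \eqref{eq:s1}, with Frenet frame given by \eqref{eq:s2}. (When $f$ is a normal form, the arc-length parameter lives on $[-l/2,l/2]$, so $b+c=0$ and $\tilde{\mb c}(u)=f(-u,0)$, reconciling with (2).) Note that $\tilde{\mb c}$ carries the \emph{forward} geodesic curvature $\mu_f(u)$, so $f_*$ will be a genuinely new strip along $-C$, not the reverse $f^\sharp$; this is the \lq\lq inverse\rq\rq\ of the section title.

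The key point, and the only place where admissibility is genuinely used, is checking that $\tilde{\mb c}$ is compatible to $f$ in the sense of \eqref{eq:CM}. Since $|\alpha_f|<\pi/2$ gives $\cos\alpha_f>0$, the geodesic curvature $\mu_f(u)=\kappa_f(u)\cos\alpha_f(u)$ is positive, and the admissibility bound \eqref{eq:alpha_f2B} yields
$$\mu_f(u)=\kappa_f(u)\cos\alpha_f(u)<\min_{w\in I}\kappa_f(w)\le \kappa_f(b+c-u)=\tilde\kappa(u)\qquad (u\in I),$$
which is precisely $\mu_f(u)<\tilde\kappa(u)$, the compatibility condition. This is the crux of the argument: for a general $f\in\mc D(C)$ one only has $\mu_f(u)<\kappa_f(u)$ pointwise, not $\mu_f(u)<\kappa_f(b+c-u)$, so the hypothesis $f\in\mc D_*(C)$ cannot be relaxed.

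With compatibility in hand I would apply Proposition~\ref{prop:Equi} to $f$ and $\tilde{\mb c}$, obtaining $g_+,g_-\in\mc D(-C)$, and set $f_*:=g_+$ and $\check f_*:=g_-$. Properties (2), (4), (5) are read off directly: (2) is the defining base curve $g_\pm(u,0)=\tilde{\mb c}(u)$; (4) is $\alpha_{g_-}=-\alpha_{g_+}$ from Proposition~\ref{prop:Equi}\,(3); and (5) is Proposition~\ref{prop:Equi}\,(6). For (3), parts (4) and (5) of Proposition~\ref{prop:Equi} give $\mu_{f_*}=\mu_f$ and that $\alpha_{f_*}$ has the same sign as $\alpha_f$, whence
$$\kappa_f(b+c-u)\cos\alpha_{f_*}(u)=\tilde\kappa(u)\cos\alpha_{f_*}(u)=\mu_{f_*}(u)=\mu_f(u)=\kappa_f(u)\cos\alpha_f(u),$$
which is \eqref{eq:as}. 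Finally, for (1) I would upgrade $\mc D(-C)$ to $\mc D_*(-C)$: since $\mu_{f_*}=\mu_{\check f_*}=\mu_f$ and $\min_{w\in I}\kappa_{f_*}(w)=\min_{w\in I}\kappa_f(b+c-w)=\min_{w\in I}\kappa_f(w)$, the displayed estimate shows $\mu_{f_*}(u)<\min_{w}\kappa_{f_*}(w)$, i.e.\ \eqref{eq:alpha_f2B} holds for $f_*$, and likewise for $\check f_*$.

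For uniqueness as map germs, I would observe that the stated requirements pin down the first angular function: \eqref{eq:as} fixes $\cos\alpha_{f_*}$, hence $|\alpha_{f_*}|\in(0,\pi/2)$, while the sign condition in (3) fixes the sign of $\alpha_{f_*}$; thus $\alpha_{f_*}$ is determined. Since a developable strip germ along a fixed curve with a fixed base point is determined by its first angular function (the second angular function being forced by the developability relation \eqref{eq:beta_f} together with the normalization \eqref{eq:beta_f2}), $f_*$ is unique, and $\check f_*$ is then its dual by (4) (cf.\ Corollary~\ref{Prop:1}), hence also unique. The main obstacle is really just the compatibility verification above; everything else is bookkeeping layered on top of Proposition~\ref{prop:Equi} and Remark~\ref{rmk:c-sharp}.
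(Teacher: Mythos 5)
Your proposal is correct and follows essentially the same route as the paper: both verify that the reversed parametrization $\mb c^\sharp(u)=\mb c_f(b+c-u)$ of $-C$ is compatible to $f$ via the admissibility bound $\mu_f(u)<\min_{w\in I}\kappa_f(w)=\min_{w\in I}\kappa_f(b+c-w)$, then apply Proposition~\ref{prop:Equi} and set $f_*:=g_+$, $\check f_*:=g_-$, upgrading membership to $\mc D_*(-C)$ by the same inequality. Your added uniqueness argument (the angular function is pinned down by \eqref{eq:as} plus the sign condition, and a strip germ is determined by its first angular function via \eqref{eq:beta_f} and \eqref{eq:beta_f2}) fills in a step the paper leaves implicit, and your remark that admissibility cannot be weakened to $f\in\mc D(C)$ correctly identifies the crux.
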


We call $f_*$ the {\it inverse} of $f$, and
$\check f_*$ the {\it inverse dual} of $f$ (cf. \cite{Ori}).
By definition,
\begin{equation}\label{eq:star}
\mu_{f_*}(u)=\mu_{\check f_*}(u)=\mu_{f}(u),
\end{equation}
and so each generator of $f$ gives
a generator of $f_*$ (and of $\check f_*$). 

\begin{proof}
Since $f$ is admissible (i.e. $f\in \mc D_*(C)$), we have
$$
0<\kappa_f(u)\cos \alpha_f(u)<\min_{u\in I}\kappa_f(u) 
\qquad (u\in I).
$$
We note that ${\mb c}^\sharp(u):=\mb c_f(b+c-u)$ ($u\in I$)
gives the parametrization of $\tilde C:=-C$.
Then $\kappa_f(b+c-u)$ is the curvature function of ${\mb c}^\sharp(u)$.
Since $\mu_f:=\kappa_f\cos \alpha_f$
is the curvature function of the generator of $f$,
we have
\begin{equation}\label{eq:Cm}
0<\mu_f(u)<\min_{u\in I}\kappa_f(u)=
\min_{u\in I}\kappa_f(b+c-u) \qquad (u\in I).
\end{equation}
So ${\mb c}^\sharp$ is compatible to $f$.
Thus,
there exists a developable strip
$g_{+}\in {\mc D_*(-C)}$ $($resp. $g_{-}
\in {\mc D_*(-C)})$ satisfying (1)--(6) of Proposition \ref{prop:Equi}.
In particular, the first angular function of $g_+$ (resp. $g_-$)
is positive (resp. negative).
Moreover, by \eqref{eq:Cm}, $g_+$ and $g_-$ are belonging to $\mc D_*(-C)$.
Then it can be easily checked that $f_*:=g_+$ and $\check f_*:=g_-$ satisfy
(1)--(5) of Proposition \ref{cor:C2}.
In fact, 
$$
\mu_{f_*}(u)=\kappa_f(b+c-u)\cos \alpha_{f_*}(u)
$$
coincides with $\mu_f(u)(=\kappa_f(u)\cos \alpha_f(u))$
by (2) of Proposition \ref{prop:Equi}.
\end{proof}

\begin{Remark} Fix $f\in \mc D_*(C)$.
By \eqref{eq:alpha-s}, the first angular function of 
the inverse dual $f_*$ takes the opposite sign as that of $f^\sharp$.
\end{Remark}

We have the following:

\begin{Proposition}
If $g\in \mc D_*(|C|)$ 
is geodesically equivalent to $f\in \mc D_*(C)$, then
$g$ is right equivalent to one of
$\{f,\,\,\check f,\,\, f_*,\,\,\check f_*\}$.
\end{Proposition}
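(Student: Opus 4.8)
The plan is to reduce everything to Proposition~\ref{fact:f}, treating the two orientations of $|C|$ by means of the reversal operation $\sharp$ (Remark~\ref{rmk:c-sharp}) and the inverse operation $*$ (Proposition~\ref{cor:C2}). First I would pass to normal forms: let $F$ and $G$ be the normal forms associated with $f$ and $g$ having base point $\mb x_0$ (Proposition~\ref{prop:fF}), so that $\hat\mu_f=\mu_F$ and $\hat\mu_g=\mu_G$, and so that, by Proposition~\ref{fact:1}, right equivalence of germs is equivalent to equality of the associated normal forms up to $\sharp$.

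Next I would use that here $J=\mathbb I_l=[-l/2,l/2]$, so that the only equi-affine self-equivalences of functions on $J$ are the identity and $s\mapsto -s$ (Definition~\ref{ft-sym}). Hence the hypothesis that $g$ and $f$ are geodesically equivalent forces $\mu_G=\mu_F$ or $\mu_G(s)=\mu_F(-s)$. Replacing $g$ by its reverse $g^\sharp$ if necessary (which is right equivalent to $g$, and whose normalized geodesic curvature function is $\mu_G$ composed with $s\mapsto -s$ by \eqref{eq:s1} and \eqref{eq:alpha-s}), I may assume $\mu_G=\mu_F$ outright; the cost is only that $G$ may now be a normal form along $C$ or along $-C$.

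The two resulting cases are then closed by a single tool. If $G\in\mc D(C)$, then $F$ and $G$ are normal forms along $C$ with common base point and $\mu_F=\mu_G$, so Proposition~\ref{fact:f} gives $G=F$ or $G=\check F$; that is, $g$ is right equivalent to $f$ or to $\check f$. If instead $G\in\mc D(-C)$, I would bring in the inverse $f_*\in\mc D_*(-C)$ of $f$, whose normal form $F_*$ is based at $\mb x_0$ and satisfies $\mu_{F_*}=\mu_f=\mu_F$ by \eqref{eq:star}; applying Proposition~\ref{fact:f} with $-C$ in the role of $C$ to the pair $F_*,G$ yields $G=F_*$ or $G=\check F_*$ (using that the inverse dual $\check f_*$ is exactly the dual of $f_*$), so $g$ is right equivalent to $f_*$ or to $\check f_*$. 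This exhausts the list $\{f,\check f,f_*,\check f_*\}$.

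The step I expect to require the most care is the bookkeeping of how the three involutions $\sharp$, dual, and $*$ interact with the orientation of $|C|$: one must verify that $\sharp$ carries $\mc D(C)$ onto $\mc D(-C)$ and composes the geodesic curvature function with $s\mapsto -s$, that $F_*$ is again a normal form based at $\mb x_0$ with $\mu_{F_*}=\mu_F$, and that Proposition~\ref{fact:f} may legitimately be invoked with $-C$ in place of $C$ (its proof uses only that a normal form is determined by its base point and first angular function, so this is harmless). Once these compatibilities are recorded, the case analysis above closes at once.
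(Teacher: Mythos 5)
Your proposal is correct and takes essentially the same approach as the paper: reduce via the reversal $\sharp$ to the case $\mu_G=\mu_F$, pass to normal forms with a common base point, and apply Proposition \ref{fact:f} once along $C$ (yielding $F$ or $\check F$) and once along $-C$ (yielding $F_*$ or $\check F_*$). You merely make explicit two points the paper leaves terse—that on $\mathbb I_l$ the equi-affine shift must be $d=0$, and that the comparison along $-C$ is with the inverse $F_*$, whose dual is exactly $\check F_*$—both of which are harmless elaborations.
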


\begin{proof}
Replacing $g$ by $g^\sharp$, we may assume that
$f,g\in D_*(|C|)$.
We denote by $F,G$ the normal forms associated with $f,g$
satisfying $F(0,0)=G(0,0)$,  respectively. 
Then the assumption that
$g$ is geodesically equivalent to $f$
implies $\mu_F=\mu_G$.
By Proposition \ref{fact:f} we have
$G=F$ or $G=\check F$.
On the other hand, if
$G\in \mc D_*(-C)$, then, replacing $C$ by $-C$ and
applying Proposition \ref{fact:f},
we can conclude $G=F_*$ or $G=\check F_*$.
\end{proof}

Moreover, we can prove the following:

\begin{Proposition}\label{prop:star00}
For each normal form $F\in \mc D_*(C)$, 
the ruling direction of 
the inverse $F_*$ is linearly independent of
that of $F$ at each point of $C$. 
In particular, 
\begin{equation}\label{eq:star}
F(\Omega_\epsilon)\cap F_*(\Omega_\epsilon)=C
\end{equation}
holds for each sufficiently small $\epsilon(>0)$.
\end{Proposition}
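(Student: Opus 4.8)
The plan is to reduce the intersection identity \eqref{eq:star} to Proposition \ref{lem:key}, for which the essential point is the linear independence of the two ruling directions. Since $F_*\in\mc D_*(-C)$ is oriented oppositely to $F$, I would first replace it by its reverse $F_*^\sharp$ (cf. Remark \ref{rmk:c-sharp}). Because $F$ is a normal form, its domain is $\mathbb I_l=[-l/2,l/2]$, so that $b+c=0$ and $F_*(u,0)=F(-u,0)$ by (2) of Proposition \ref{cor:C2}; hence $\mb c_{F_*^\sharp}(u)=\mb c_{F_*}(-u)=\mb c_F(u)$ recovers the original orientation, and $F_*^\sharp\in\mc D(C)$ is a normal form with the same image as $F_*$ and with $F_*^\sharp(0,0)=\mb x_0$. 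By Remark \ref{rmk:c-sharp} its normalized ruling field coincides with that of $F_*$ along $C$, and \eqref{eq:alpha-s} gives $\alpha_{F_*^\sharp}(u)=-\alpha_{F_*}(-u)$ for its first angular function.

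The heart of the argument is a sign computation. As $\mathbb I_l$ is connected and every first angular function lies in $C^\infty_{\pi/2}(\mathbb I_l)$ and thus never vanishes, both $\alpha_F$ and $\alpha_{F_*}$ have constant sign on $\mathbb I_l$. By (3) of Proposition \ref{cor:C2}, $\alpha_{F_*}$ has the same (constant) sign as $\alpha_F$; consequently $\alpha_{F_*^\sharp}(u)=-\alpha_{F_*}(-u)$ carries the opposite constant sign to $\alpha_F(u)$ at every $u$, so that $\alpha_{F_*^\sharp}-\alpha_F$ is nowhere zero on $\mathbb I_l$. I would note that admissibility of $F$ is used only to guarantee, through Proposition \ref{cor:C2}, that $F_*$ exists at all; the sign argument itself needs nothing beyond $F,F_*\in\mc D$.

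With $\alpha_{F_*^\sharp}-\alpha_F$ nowhere vanishing, Lemma \ref{lem:Tv} applied to the normal forms $F$ and $F_*^\sharp$ (both in $\mc D(C)$, both based at $\mb x_0$) yields that their ruling directions, hence those of $F$ and $F_*$, are linearly independent at each point of $C$; this is the first assertion. Proposition \ref{lem:key} then gives $F(\Omega_\epsilon)\cap F_*^\sharp(\Omega_\epsilon)=C$ for all sufficiently small $\epsilon>0$, and since $F_*^\sharp(u,v)=F_*(-u,v)$ merely reparametrizes $\Omega_\epsilon=\mathbb I_l\times(-\epsilon,\epsilon)$ onto itself, we have $F_*^\sharp(\Omega_\epsilon)=F_*(\Omega_\epsilon)$, which establishes \eqref{eq:star}.

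I expect the only real obstacle to be the bookkeeping across the reverse operation: correctly propagating the transformations of the Frenet frame, the first angular function and the ruling vector field under $u\mapsto -u$ (equations \eqref{eq:s2} and \eqref{eq:alpha-s}), and checking that the base-point and orientation hypotheses of Lemma \ref{lem:Tv} and Proposition \ref{lem:key} are genuinely satisfied by $F$ and $F_*^\sharp$. Conceptually the proof is short: it rests entirely on the fact that a nowhere-zero angular function on the connected interval $\mathbb I_l$ has constant sign.
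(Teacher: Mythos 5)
Your proof is correct and takes essentially the same route as the paper's: pass to the reverse $(F_*)^\sharp$, observe via \eqref{eq:alpha-s} and (3) of Proposition \ref{cor:C2} that its first angular function has the opposite sign to $\alpha_F$, and then apply Lemma \ref{lem:Tv} followed by Proposition \ref{lem:key}. The paper's proof is just terser, leaving implicit the constant-sign argument on the connected interval, the base-point check, and the identification $F_*^\sharp(\Omega_\epsilon)=F_*(\Omega_\epsilon)$ that you spell out.
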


\begin{proof}
Without loss of generality, we may replace $f$
by its normal form $F$.
Since the first angular function of $(F_*)^\sharp$
has the opposite sign of that of $F$ (cf. \eqref{eq:alpha-s}),
Lemma~\ref{lem:Tv} yields that the ruling direction of
$F_*$ is linearly independent of that of $F$ along $C$. 
The last assertion is a consequence of Proposition \ref{lem:key}.
\end{proof}

Using this proposition, we can prove the following:

\begin{Proposition}\label{prop:ffs}
For $f\in \mc D_*(C)$, the following three assertions
are equivalent:
\begin{enumerate}
\item $f$ is right equivalent to $\check f_*$,
\item $\check f$ is right equivalent to $f_*$,
\item the normalized geodesic
curvature $\hat \mu_f$ of $f$
has a symmetry $($cf. Definition~\ref{ft-sym}$)$.
\end{enumerate}
\end{Proposition}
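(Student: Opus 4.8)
The plan is to pass to normal forms and reduce everything to a single identity between first angular functions. Let $F\in\mc D_*(C)$ be the normal form associated with $f$ (Proposition~\ref{prop:fF}), parametrized by arc-length $s\in[-l/2,l/2]$ with $F(s,0)=\mb c(s)$ and $\mb c(0)=\mb x_0$; since normal forms of inverses are again normal forms (Proposition~\ref{cor:C2}~(5)), I may replace $\check f,f_*,\check f_*$ by the normal forms $\check F,F_*,\check F_*$. By \eqref{eq:D-dual} and Proposition~\ref{cor:C2}~(4) one has $\alpha_{\check F}=-\alpha_F$ and $\alpha_{\check F_*}=-\alpha_{F_*}$, while $F_*\in\mc D_*(-C)$ has base curve $F_*(s,0)=\mb c(-s)$, curvature $\kappa_{F_*}(s)=\kappa_F(-s)$ (cf. \eqref{eq:s1}), and first angular function $\alpha_{F_*}$ of the same sign as $\alpha_F$, with \eqref{eq:as} reading
\begin{equation}\label{eq:inv-rel}
\kappa_F(-s)\cos\alpha_{F_*}(s)=\kappa_F(s)\cos\alpha_F(s)=\mu_F(s).
\end{equation}
Since $b+c=0$ here, the reverse operation $\sharp$ acts by $s\mapsto -s$, so $(\check F_*)^\sharp$ and $(F_*)^\sharp$ are normal forms along $C$ with the same base curve $\mb c(s)$ as $F$, and \eqref{eq:alpha-s} gives
\begin{equation}\label{eq:rev-ang}
\alpha_{(\check F_*)^\sharp}(s)=\alpha_{F_*}(-s),\qquad
\alpha_{(F_*)^\sharp}(s)=-\alpha_{F_*}(-s).
\end{equation}

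Next I would translate right equivalence into angular functions. By Proposition~\ref{fact:1}, $f$ is right equivalent to $\check f_*$ precisely when $F=\check F_*$ or $F=(\check F_*)^\sharp$; the first is impossible, since the base curves $\mb c(s)$ and $\mb c(-s)$ of $F$ and $\check F_*$ cannot coincide along an embedded curve. As two normal forms in $\mc D(C)$ with the common base curve $\mb c(s)$ agree iff their first angular functions do (cf. Proposition~\ref{prop:U}), condition (1) is equivalent to $F=(\check F_*)^\sharp$, i.e. by \eqref{eq:rev-ang} to
\begin{equation}\label{eq:ang-cond}
\alpha_F(s)=\alpha_{F_*}(-s)\qquad(s\in[-l/2,l/2]).
\end{equation}
The same argument applied to $\check F$ and $F_*$ (again $\check F=F_*$ is excluded by the base curves) shows that (2) is equivalent to $\check F=(F_*)^\sharp$, that is to $-\alpha_F(s)=-\alpha_{F_*}(-s)$, which is once more \eqref{eq:ang-cond}. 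Hence (1)$\Leftrightarrow$(2). Conceptually, $(\check f)_*=\check{(f_*)}$ is immediate from the angular data, and duality preserves right equivalence because $\check{(G^\sharp)}=(\check G)^\sharp$, so (1) and (2) are interchanged by the duality involution.

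It remains to identify \eqref{eq:ang-cond} with the symmetry of $\hat\mu_f=\mu_F$ (Proposition~\ref{prop:fF}), which in the sense of Definition~\ref{ft-sym} means $\mu_F(s)=\mu_F(-s)$ on $[-l/2,l/2]$. If \eqref{eq:ang-cond} holds then $\cos\alpha_F(s)=\cos\alpha_{F_*}(-s)$, and \eqref{eq:inv-rel} evaluated at $-s$ gives $\kappa_F(s)\cos\alpha_{F_*}(-s)=\mu_F(-s)$, whence $\mu_F(s)=\kappa_F(s)\cos\alpha_F(s)=\mu_F(-s)$; this is (3). Conversely, $\mu_F(s)=\mu_F(-s)$ together with \eqref{eq:inv-rel} at $-s$ forces $\cos\alpha_F(s)=\cos\alpha_{F_*}(-s)$, so $\alpha_F(s)=\pm\alpha_{F_*}(-s)$ pointwise. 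Resolving this sign globally is the step I expect to require the only real care. I would set $h_\pm(s):=\alpha_F(s)\mp\alpha_{F_*}(-s)$; then $h_+h_-=\alpha_F(s)^2-\alpha_{F_*}(-s)^2\equiv0$, so the closed zero sets of $h_+$ and $h_-$ cover the connected interval $[-l/2,l/2]$, while a common zero would give $\alpha_{F_*}(-s)=0$, contradicting \eqref{eq:alpha_f}; thus one sign holds throughout. Evaluating at the midpoint $s=0$, where $\cos\alpha_F(0)=\cos\alpha_{F_*}(0)$ and the two angles share the same sign, forces $\alpha_F(0)=\alpha_{F_*}(0)$ and hence selects the plus sign, so \eqref{eq:ang-cond} holds and the cycle (3)$\Rightarrow$(1) closes. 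The symmetric choice of base point $\mb x_0$ at the midpoint of $C$ is precisely what excludes the spurious alternative $F=(F_*)^\sharp$ that Proposition~\ref{fact:f} would otherwise leave open.
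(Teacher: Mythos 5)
Your argument is correct, and it reaches the conclusion by a different mechanism than the paper's proof. Both proofs ultimately reduce condition (1) to the same equation --- in the paper's notation $F^\sharp=\check F_*$, in yours the equivalent identity $\alpha_F(s)=\alpha_{F_*}(-s)$ --- but they diverge in how the implication (3)$\Rightarrow$(1) is closed. The paper argues: a symmetry of $\mu_F$ makes $F^\sharp$ geodesically equivalent to $F$; since $F^\sharp\in\mc D_*(-C)$, Proposition~\ref{fact:f} forces $F^\sharp=F_*$ or $F^\sharp=\check F_*$; and $F^\sharp=F_*$ is excluded by Proposition~\ref{prop:star00}, because $F^\sharp$ has the same image as $F$ while $F$ and $F_*$ meet only along $C$. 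You never invoke Proposition~\ref{fact:f} or Proposition~\ref{prop:star00}: instead you work directly with \eqref{eq:as}, resolve the pointwise sign ambiguity $\alpha_F(s)=\pm\alpha_{F_*}(-s)$ by a connectedness argument (the two closed zero sets are disjoint since the angular functions never vanish by \eqref{eq:alpha_f}), and pin down the plus sign using the same-sign convention of Proposition~\ref{cor:C2}~(3) evaluated at the midpoint base point. The paper's route is shorter because it reuses established machinery, and the exclusion of the spurious case $F^\sharp=F_*$ is geometric and instantaneous. Your route buys two things: the equivalence (1)$\Leftrightarrow$(2), which the paper dismisses as obvious, drops out for free, since both conditions reduce to the same angular identity; and the global sign resolution, which the proof of Proposition~\ref{fact:f} leaves implicit when passing from $\cos\alpha_F=\cos\alpha_G$ to $G=F$ or $G=\check F$, is made fully explicit. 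You also spell out why $F=\check F_*$ itself (as opposed to $F=(\check F_*)^\sharp$) cannot occur --- the base parametrizations $\mb c(s)$ and $\mb c(-s)$ of an embedded arc differ --- a point the paper glosses over in the converse direction when it asserts that (1) yields $F^\sharp=\check F_*$.
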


\begin{proof}
The equivalency of (1) and (2) is obvious.
So it is sufficient to show that (1)
is equivalent to (3).
We may assume that $f$ is a normal form and
denote it by $F$. 
Then $F(s,v)$ is defined on a 
tubular neighborhood of $\mathbb I_l\times \{0\}$
in $\R^2$, where $l$ is the total arc-length of $C$.
Suppose that $\mu_F$ has a symmetry.
Since $\mu_F(s)=\mu_F(-s)$ for $s\in \mathbb I_l$,
$F^\sharp$ is geodesically equivalent to $F$.
Since $F^\sharp\in \mc D_*(-C)$,
Proposition \ref{fact:f} yields that 
$F^\sharp$ coincides with $F_*$ or $\check F_*$.
However, $F^\sharp$ never coincides with $F_*$
by Proposition \ref{prop:star00}.
So we have $F^\sharp=\check F_*$.

Conversely, we suppose (1). Then
$F^\sharp=\check F_*$ holds.
By \eqref{eq:star},
we have
$$
\mu_F(-s) = 
\mu_{F^\sharp}(s)=\mu_{\check F_*}(s)=\mu_{F}(s),
$$
which implies that $\mu_F$ has a symmetry.
\end{proof}

In the above discussions, the following assertion was also obtained.

\begin{Corollary}\label{cor:F_sharp}
Let $F\in {\mc D_*}(C)$ be  a normal form.
If the geodesic curvature $\mu_F$
has a symmetry, then $\check F_*=F^\sharp$ holds.
\end{Corollary}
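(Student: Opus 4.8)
The plan is to present this as the clean extraction of the forward implication already contained in the proof of Proposition~\ref{prop:ffs}. First I would unwind the hypothesis. Since $F\in\mc D_*(C)$ is a normal form defined over $J=\mathbb I_l=[-l/2,l/2]$, a symmetry of the geodesic curvature function $\mu_F$ must be the reflection $s\mapsto -s$ (here $b+c=0$ in the notation of Definition~\ref{ft-sym}), so the assumption reads $\mu_F(s)=\mu_F(-s)$ on $\mathbb I_l$.

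Next I would bring the three candidate normal forms $F^\sharp$, $F_*$ and $\check F_*$ into play, all of which lie in $\mc D_*(-C)$ and are based at $\mb x_0$. By Proposition~\ref{cor:C2} we have $\mu_{F_*}=\mu_{\check F_*}=\mu_F$, while the reverse operation gives $\mu_{F^\sharp}(s)=\mu_F(-s)$, which under the hypothesis equals $\mu_F(s)$. Thus $F^\sharp$ and $\check F_*$ are two normal forms in $\mc D(-C)$, based at $\mb x_0$, sharing a common geodesic curvature function. Applying Proposition~\ref{fact:f} with the oriented curve $-C$ as ambient curve (taking $F^\sharp$ as the base normal form and $\check F_*$ as the comparison one) then forces $\check F_*=F^\sharp$ or $\check F_*$ to be the dual of $F^\sharp$; dualizing the second alternative and using that the dual of $\check F_*$ is $F_*$, the two possibilities become $\check F_*=F^\sharp$ or $F_*=F^\sharp$.

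The crux is to rule out the alternative $F_*=F^\sharp$. For this I would invoke Proposition~\ref{prop:star00}: the normalized ruling vector field of $F_*$ is linearly independent of that of $F$ at every point of $C$. On the other hand, $F^\sharp$ is merely the reverse reparametrization of $F$, so its ruling vector field at each point of $|C|$ agrees, as a vector in $\R^3$, with that of $F$; in particular it fails to be linearly independent of the ruling field of $F$. Since equal map germs must carry the same ruling directions, this contradiction yields $F^\sharp\ne F_*$, whence $\check F_*=F^\sharp$, as desired.

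I expect the only genuine subtlety to be bookkeeping: one must apply Proposition~\ref{fact:f} over the reversed curve $-C$ rather than over $C$, keep careful track of the base point $\mb x_0$, and track the effect of the $\sharp$- and dual-operations on the first angular functions via \eqref{eq:alpha-s} and \eqref{eq:D-dual} (so as to confirm that the dual of $\check F_*$ is indeed $F_*$). No new estimates are needed, since all analytic content has already been packaged into Proposition~\ref{cor:C2}, Proposition~\ref{fact:f} and Proposition~\ref{prop:star00}.
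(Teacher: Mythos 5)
Your proof is correct and takes essentially the same route as the paper: the paper obtains this corollary by extracting exactly this forward implication from its proof of Proposition~\ref{prop:ffs}, namely that the symmetry makes $F^\sharp$ geodesically equivalent to $F$, that Proposition~\ref{fact:f} applied over $-C$ forces $F^\sharp\in\{F_*,\check F_*\}$, and that Proposition~\ref{prop:star00} excludes $F^\sharp=F_*$. Your spelled-out justification of that exclusion (the rulings of $F^\sharp$ coincide pointwise with those of $F$, while those of $F_*$ are linearly independent of them) is just an explicit version of the paper's one-line appeal to Proposition~\ref{prop:star00}.
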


\begin{Theorem}\label{thm:2G}
Let $f\in \mc D_*(C)$ and $n_f$ the number of right 
equivalence classes
of $f,\,\check f,\, f_*$ and $\check f_*$.
If the normalized geodesic
curvature of $f$ has no symmetries, then $n_f=4$,
otherwise $n_f=2$.
\end{Theorem}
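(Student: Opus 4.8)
The plan is to turn the statement into a count of how many of the six unordered pairs among $f,\check f, f_*,\check f_*$ are right equivalent, and to read off each answer from the structural propositions already proved. Since right equivalence of germs in $\mc D(|C|)$ is detected by the associated normal forms (Proposition \ref{fact:1}), I would first replace all four strips by their normal forms based at $\mb x_0$ and argue entirely with these; by Corollary \ref{Prop:1} and Proposition \ref{cor:C2} the dual, inverse and inverse dual of a normal form are again normal forms lying in $\mc D_*(C)$ or $\mc D_*(-C)$, so this loses nothing.

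Three of the six pairs are never right equivalent, independently of any symmetry. That $f$ and $\check f$ are not right equivalent is exactly the statement that the dual is an isomer (Proposition \ref{fact:fm1}); applying the same fact to $f_*\in\mc D_*(-C)$ and its dual $\check f_*$ shows $f_*$ and $\check f_*$ are not right equivalent either. That $f$ and its inverse $f_*$ are not right equivalent follows from Proposition \ref{prop:star00}, since their images meet only along $C$ and Proposition \ref{fact:1} then forbids right equivalence.

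The fourth unconditional non-equivalence, between $\check f$ and $\check f_*$, is the one place where a small identification is needed: I would show that $\check f_*$ is in fact the inverse of $\check f$, i.e. $(\check f)_*=\check f_*$. Both strips lie in $\mc D_*(-C)$, carry the base curve $u\mapsto f(-u,0)$, have first angular function of sign opposite to that of $f$, and satisfy the defining relation $\kappa_f(-u)\cos\alpha(u)=\kappa_f(u)\cos\alpha_f(u)$ (using $\cos\alpha_{\check f}=\cos\alpha_f$); the uniqueness clause of Proposition \ref{cor:C2} then forces them to agree. Once this is in hand, Proposition \ref{prop:star00} applied to $\check f$ shows that $\check f$ and $(\check f)_*=\check f_*$ are not right equivalent. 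The two remaining pairs, $\{f,\check f_*\}$ and $\{\check f,f_*\}$, are precisely those governed by Proposition \ref{prop:ffs}, which asserts that each is right equivalent exactly when $\hat\mu_f$ has a symmetry.

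With these six facts the count is immediate. If $\hat\mu_f$ has no symmetry, Proposition \ref{prop:ffs} rules out the last two pairs as well, so all four strips are mutually non-right-equivalent and $n_f=4$. If $\hat\mu_f$ has a symmetry, the four strips collapse into the two groups $\{f,\check f_*\}$ and $\{\check f,f_*\}$; these are distinct classes because $f$ and $\check f$ are not right equivalent, so $n_f=2$. I expect the only genuinely technical step to be the identification $(\check f)_*=\check f_*$ in the third paragraph, needed to dispose of the pair $\{\check f,\check f_*\}$ without recomputing angular functions; the rest is a direct assembly of Propositions \ref{fact:fm1}, \ref{prop:star00} and \ref{prop:ffs}.
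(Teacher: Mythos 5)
Your proof is correct and follows essentially the same route as the paper: both arguments rest on Proposition \ref{prop:ffs} to handle the two conditional pairs $\{f,\check f_*\}$ and $\{\check f,f_*\}$, and on the intersection facts \eqref{eq:check} and Proposition \ref{prop:star00} to rule out equivalence among the remaining pairs. The only difference is presentational: the paper disposes of the pairs $\{\check f,\check f_*\}$ and $\{f_*,\check f_*\}$ implicitly, via its ``replacing $F$ by one of $\check F, F_*, \check F_*$'' reduction, whereas you make the underlying identification $(\check f)_*=\check f_*$ explicit before applying Propositions \ref{fact:fm1} and \ref{prop:star00} -- a useful clarification, but not a different argument.
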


\begin{proof}
We may assume that $f$ is a normal form and
denote it by $F$. Suppose that $\mu_F$ has a symmetry.
By Proposition \ref{prop:ffs}, we have
$F=\check F_*$ and $\check F=F_*$,
and $n_F=2$.
On the other hand, suppose that $n_f<4$.
If necessary, replacing $F$ by one of $\{\check F$, $F_*, \check F_*\}$,
we may assume that $F$ coincides with
one of  $\check F, F_*, \check F_*$.
By
\eqref{eq:check} and \eqref{eq:star},
$F$ must coincide with
$\check F_*$.
By Proposition~\ref{prop:ffs}, $\mu_F$ has a symmetry.
\end{proof}

\begin{Remark}
In Example \eqref{ex:42},
the curvature function of $C$ and
the angular function of $F$ are constant,
$\check F_*=F^\sharp$ and $\check F=F_*^\sharp$
hold. So in this case,$n_f=2$ holds.
\end{Remark}

\medskip
\begin{proof}[Proof of Theorem A] 
We fix a curved folding
$P\in \mc P_*(\Gamma,C)$ 
arbitrarily.
Then there exists a normal form $F\in \mc D_*(C)$
such that $P=\Phi_F$.
Moreover,
$$
\Phi_F(\Omega_{\epsilon}),\,\, \Phi_{\check F}(\Omega_{\epsilon}),\,\,
\Phi_{F_*}(\Omega_{\epsilon}),\,\, \Phi_{\check F_*}(\Omega_{\epsilon})
$$
produce all candidates of curved foldings.
We let $\Gamma$ be a generator of $F$.
Since $\Gamma$ has no self-intersections,
the symmetries of $\Gamma$ correspond to the symmetries
of the geodesic curvature function $\mu_F$
(cf. Lemma \ref{lem:L}).
By Proposition \ref{Prop:Phi1},
the number of elements in $\mc P_*(\Gamma,C)$ coincides with
the number of distinct subsets in
$
\{F(\Omega_{\epsilon}),\check F(\Omega_{\epsilon}), 
F_*(\Omega_{\epsilon}), \check F_*(\Omega_{\epsilon})\}.
$
So we obtain Theorem A by Theorem \ref{thm:2G}.
\end{proof}

\begin{figure}[htb]
\begin{center}
 \begin{center}
   \includegraphics[width=0.25\linewidth]{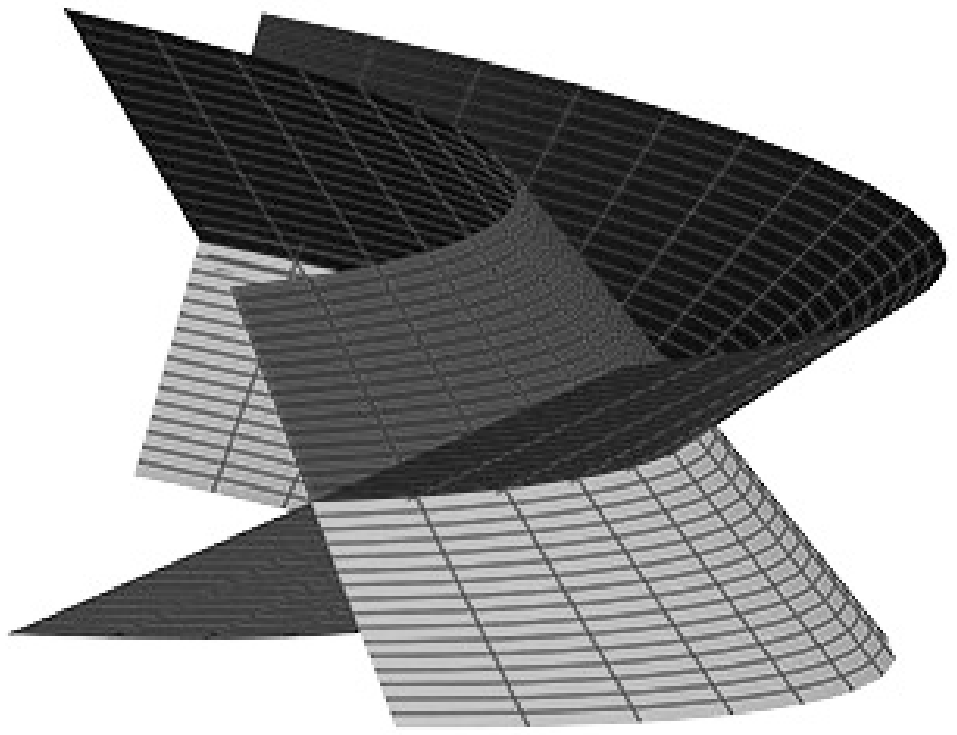} \qquad \quad
   \includegraphics[width=0.25\linewidth]{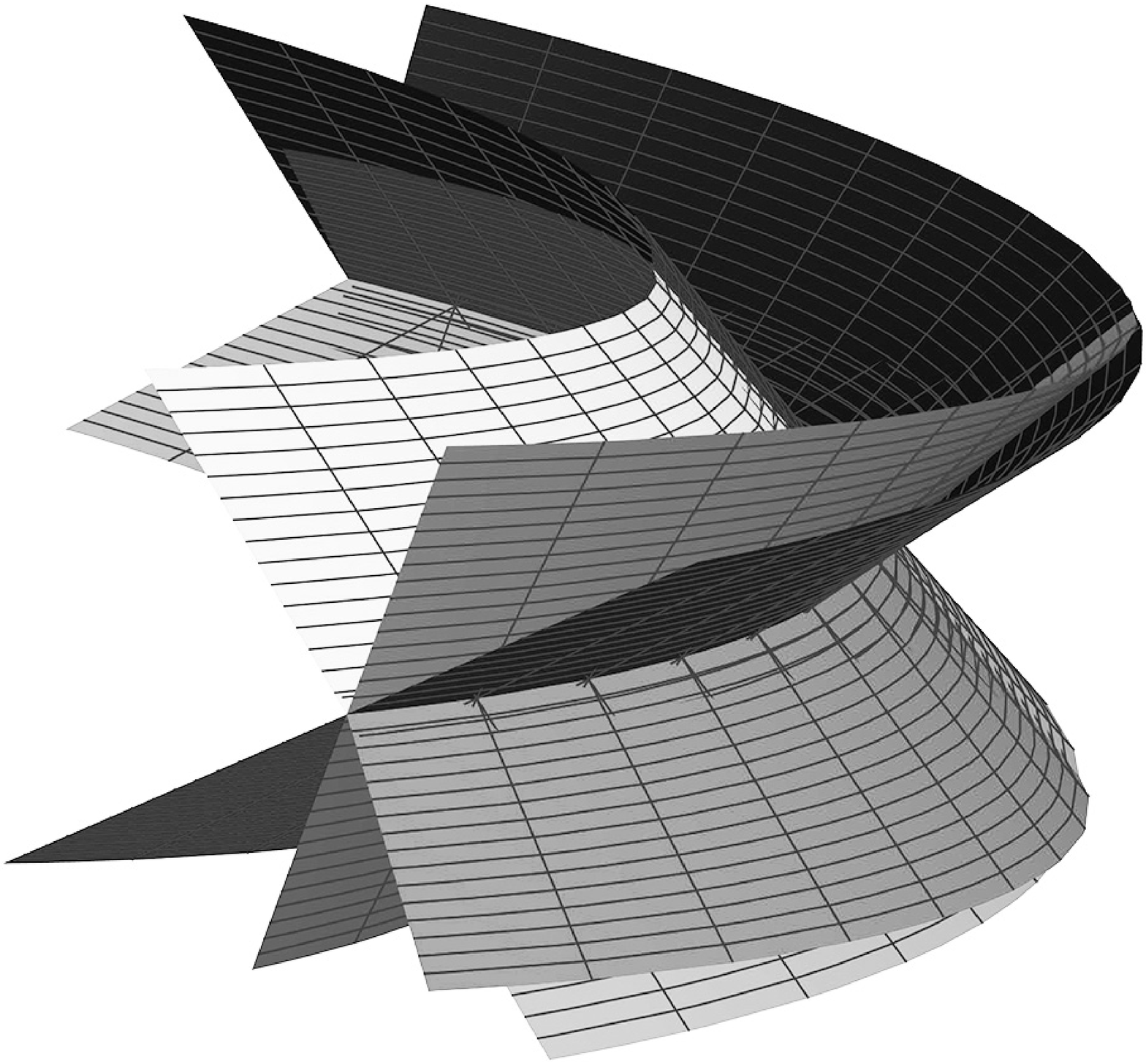} 
  \end{center}
\caption{Images of $\Phi_1$ and $\Phi_2$ (left)
and images of $\{\Phi_j\}_{j=1,\dots,4}$ (right).
}\label{fig:two}
\end{center}
\end{figure}

\begin{Example}
We consider a one-quarter of the unit circle given by
$$
\mb c(s):=(\cos s, \sin s, 0) \qquad \left(|s|\le \frac{\pi}{4}\right)
$$
and
$$
\alpha(s):=\frac{\pi}4-\frac{s}{2},\qquad 
T:=\pmt{1 & 0& 0 \\ 0 & -1& 0 \\ 0 & 0 & 1 }.
$$
Then $T(C)=C$, and 
the developable strip $F:=F^{\alpha}$
along $C:=\mb c([-\pi/4,\pi/4])$ 
with the first angular function $\alpha$
induces four associated origami maps
$$
\Phi_1:=\Phi_F,\quad \Phi_2:=\check \Phi_F,
\quad \Phi_3:=T\circ \Phi_F,\quad \Phi_4:=T\circ \check \Phi_F.
$$
Since the normalized geodesic curvature
$
\mu_F=\cos \alpha
$
does not have symmetries, the four curved foldings
are distinct. In fact, 
Figure \ref{fig:two} 
left (resp. right) indicates
the images of $\Phi_1$ and $\Phi_2$ (resp. $\Phi_1,\dots,\Phi_4$)), by which 
we can observe the four curved foldings $\Phi_1,\dots,\Phi_4$
are distinct.
\end{Example}

\section{The congruence classes of isomers of developable strips}

We first consider the case that $C$ admits a symmetry:

\begin{Lemma}\label{prop:Pi}
 Suppose that $C$ lies in a plane $\Pi$, and let $T_0$ be
the reflection with respect to $\Pi$. Then  
$T_0\circ f=\check f$ holds for each $f\in \mc D(C)$. 
\end{Lemma}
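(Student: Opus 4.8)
The plan is to compute $T_0\circ f$ explicitly in the Frenet frame adapted to the planar curve $C$, and then to identify the result with $\check f$ by means of the uniqueness of the dual recorded in Corollary~\ref{Prop:1}. Since $C$ lies in $\Pi$ and has nowhere-vanishing curvature, its torsion vanishes identically, so that the binormal $\mb b(u)$ is a constant unit vector normal to $\Pi$ while $\mb e(u)$ and $\mb n(u)$ are parallel to $\Pi$. The reflection $T_0$ fixes $\Pi$ pointwise; in particular it fixes $C$ pointwise, and its linear part $dT_0$ satisfies $dT_0(\mb e)=\mb e$, $dT_0(\mb n)=\mb n$ and $dT_0(\mb b)=-\mb b$.

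First I would record that, because $T_0$ fixes each point of $C$, it is a (trivial) symmetry of $C$ in the sense of Definition~\ref{def:S}; hence $T_0\circ f\in\mc D(|C|)$ by Proposition~\ref{L0}, and since $T_0\circ \mb c_f=\mb c_f$ parametrizes $C$ with its original orientation, in fact $T_0\circ f\in \mc D(C)$. Writing $f(u,v)=\mb c_f(u)+v\,\xi_f(u)$ and using that $T_0$ is affine, I obtain
$$
T_0\circ f(u,v)=\mb c_f(u)+v\,dT_0(\xi_f(u)),
$$
so that $T_0\circ f$ is a ruled strip along $C$ with base curve $\mb c_f$ and ruling field $dT_0(\xi_f)$. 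Substituting the expansion \eqref{eq:f-1} of $\xi_f$ and applying $dT_0$ to the frame yields
$$
dT_0(\xi_f(u))=\cos\beta_f(u)\,\mb e(u)+\sin\beta_f(u)\bigl(\cos\alpha_f(u)\,\mb n(u)-\sin\alpha_f(u)\,\mb b(u)\bigr),
$$
from which I read off that the first angular function of $T_0\circ f$ equals $-\alpha_f$, while its second angular function remains $\beta_f\in(0,\pi)$. Since $dT_0(\mb n)=\mb n$, we have $dT_0(\xi_f)\cdot\mb n=\xi_f\cdot\mb n>0$ by \eqref{eq:xi-n}, so $dT_0(\xi_f)$ is the normalized ruling field; together with $|\cos(-\alpha_f)|=|\cos\alpha_f|\in(0,1)$ and $\beta_f\in(0,\pi)$ (cf.~\eqref{eq:beta_f2}) this confirms that $T_0\circ f$ is a normalized member of $\mc D(C)$.

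Finally I would invoke uniqueness. By Corollary~\ref{Prop:1}, the dual $\check f$ is the unique member of $\mc D(C)$ whose base curve is $\mb c_f$ and whose first angular function is $-\alpha_f$ (cf.~\eqref{eq:D-dual}). Since $T_0\circ f$ enjoys exactly these two properties, the uniqueness clause~(3) of Corollary~\ref{Prop:1} forces $T_0\circ f=\check f$, which is the assertion.

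The argument involves no genuine difficulty beyond this bookkeeping: the one point demanding care is that Corollary~\ref{Prop:1} characterizes $\check f$ only among normalized strips in $\mc D(C)$, so before applying it I must confirm that $T_0\circ f$ is itself such a strip. This is where planarity is used decisively, since precisely because $T_0$ fixes $C$ pointwise and fixes $\mb n$ while reversing $\mb b$, the orientation of $C$, the positivity of $\xi_f\cdot\mb n$, and the range $(0,\pi)$ of the second angular function are all preserved, whereas only the sign of $\alpha_f$ is reversed.
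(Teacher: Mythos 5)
Your proof is correct and follows essentially the same route as the paper's: both reduce to the observation that, for a planar curve, $dT_0$ fixes $\mb e$ and $\mb n$ while reversing $\mb b$, so that $T_0\circ f$ is the developable strip along $C$ with first angular function $-\alpha_f$ and unchanged second angular function, hence coincides with $\check f$. The only cosmetic difference is that you conclude via the uniqueness clause of Corollary~\ref{Prop:1}, whereas the paper concludes by noting $\beta_{\check f}=\beta_f$ and comparing the two expressions directly; these amount to the same thing.
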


\begin{proof}
We may assume that $C$ lies in the $xy$-plane in $\R^3$.
The reflection $T_0$ maps $(x,y,z)\in \R^3$ to 
$(x,y,-z)\in \R^3$. 
Since $\mb b=(0,0,1)$ and
the second angular function of $\check f$ 
coincides with that of $f$, 
the assertion follows by a direct calculation.
\end{proof}

Let $\mb c(s)$ ($s\in \mathbb I_l$) be the arc-length parametrization of $C$,
where $\mathbb I_l:=[-l/2,l/2]$.
The following assertion plays an important role 
in the latter discussions:

\begin{Theorem}\label{prop:24}
Let $F\in {\mc D_*}(C)$ be  a normal form,
and let $T$ be a non-trivial symmetry of $C$.
Then, the following two assertions hold:
\begin{enumerate}
\item If $T$ is a positive symmetry, then 
$T\circ F=F_*$.
Moreover, if $\mu_F$ has a symmetry, then $T\circ F(-s,v)=\check F(s,v)$.
\item If $T$ is a negative symmetry, then  
$
T\circ F=\check F_*.
$
Moreover, if $\mu_F$ has a symmetry, then $T\circ F(-s,v)=F(s,v)$.
\end{enumerate}
\end{Theorem}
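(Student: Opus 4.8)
The plan is to reduce both assertions to a comparison of first angular functions, which is then settled by Proposition \ref{fact:f}. First I would determine how $T$ acts on the arc. Since $C$ is a simple arc and $T$ is a \emph{non-trivial} symmetry (so it moves some point of $C$), the endpoints of $C$ cannot be fixed, and the induced isometry of the arc must reverse it; thus $T\circ\mb c(s)=\mb c(-s)$ on $\mathbb I_l$, and in particular $T$ fixes the base point $\mb x_0=\mb c(0)$. Because curvature is invariant under isometries and under orientation reversal, this relation already forces $\kappa_F(s)=\kappa_F(-s)$, i.e. the curvature of $C$ is even.

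Next I would record the action of $dT$ on the Frenet frame. Writing $T(\mb x)=A\mb x+\mb p$ with $A\in O(3)$, the identity $A\mb e(s)=-\mb e(-s)$ differentiated once (using the Frenet equations and the evenness of $\kappa_F$) gives $A\mb n(s)=\mb n(-s)$, and the cross-product identity $A(\mb u\times\mb v)=(\det A)(A\mb u\times A\mb v)$ gives $A\mb b(s)=-(\det A)\,\mb b(-s)$. The base curve of $T\circ F$ is $s\mapsto\mb c(-s)$, whose Frenet frame is $(-\mb e(-s),\,\mb n(-s),\,-\mb b(-s))$ as in Remark \ref{rmk:c-sharp}. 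Applying $A$ to the co-normal $\mb N_F=\cos\alpha_F\,\mb n+\sin\alpha_F\,\mb b$ and rewriting it in this frame, I expect the first angular function of $T\circ F$ to be $\alpha_F$ when $\det A=1$ (positive $T$) and $-\alpha_F$ when $\det A=-1$ (negative $T$).

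With this computed, the identifications are immediate. By Proposition \ref{L0}, $T\circ F$ is a normal form over $-C$ with base point $\mb x_0$ and $\mu_{T\circ F}=\mu_F$, while the inverse $F_*$ and the inverse dual $\check F_*$ are normal forms over $-C$ with base point $\mb x_0$ and $\mu_{F_*}=\mu_{\check F_*}=\mu_F$. Since $\kappa_F$ is even, the defining relation \eqref{eq:as} forces $\alpha_{F_*}=\alpha_F$, hence $\alpha_{\check F_*}=-\alpha_F$. Comparing the signs of the first angular functions and applying Proposition \ref{fact:f} over $-C$, I obtain $T\circ F=F_*$ when $T$ is positive and $T\circ F=\check F_*$ when $T$ is negative.

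For the \emph{moreover} clauses I would use the symmetry of $\mu_F$, which gives $\check F_*=F^\sharp$ by Corollary \ref{cor:F_sharp}. In the negative case this is immediate: $T\circ F=\check F_*=F^\sharp$, so $(T\circ F)(-s,v)=F^\sharp(-s,v)=F(s,v)$. In the positive case one checks that $(\check F)^\sharp$ and the dual of $F^\sharp$ coincide (both have first angular function $\alpha_F(-s)$ over $-C$ with base point $\mb x_0$); since $F^\sharp=\check F_*$, its dual is $F_*$, whence $(\check F)^\sharp=F_*$, i.e. $\check F(s,v)=F_*(-s,v)=(T\circ F)(-s,v)$. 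I expect the frame-transformation step to be the only real obstacle: it is precisely the sign $-(\det A)$ on the binormal that separates the positive and negative cases and produces the switch between $F_*$ and $\check F_*$, after which everything is bookkeeping with the involutions $f\mapsto f^\sharp$ and $f\mapsto\check f$.
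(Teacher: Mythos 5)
Your proof is correct and follows essentially the same route as the paper's: both determine how $dT$ acts on the Frenet frame (the paper's \eqref{eq:TNB}), deduce that the first angular function of $T\circ F$, viewed as a normal form over $-C$ with base point $\mb x_0$, equals $\alpha_F$ or $-\alpha_F$ according to $\det T=\pm 1$, identify $T\circ F$ with $F_*$ or $\check F_*$ by uniqueness of normal forms with prescribed geodesic curvature and base point (Proposition \ref{fact:f}), and settle the \emph{moreover} clauses via $F^\sharp=\check F_*$ (Corollary \ref{cor:F_sharp}) together with the compatibility of the dual with the reverse $f\mapsto f^\sharp$. If anything, your sign bookkeeping (working with the co-normal $\mb N_F$ and obtaining first angular function $\sigma\alpha_F$ for $T\circ F$) is more careful than the paper's displayed computation of $T\xi_F$, which contains internal sign slips (e.g.\ the coefficient $\cos(\pi-\beta_F)$ on $\mb e^\sharp$ and the claim that the first angular function is $-\sigma\alpha_*$) but nevertheless reaches the same, correct, conclusions.
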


We remark that the developable strip $F^\alpha$ given in 
Example \ref{ex:42} satisfies (1).  

\begin{proof}
By a suitable motion in $\R^3$,
we may assume that $F(0,0)=\mb 0$ and
$T$ is an orthogonal matrix.
Then its determinant $\sigma:=\det(T)$
is equal to $1$ (resp. $-1$) if $T$ is a positive (resp. negative)
symmetry of $C$.

Since $C$ admits a non-trivial symmetry,
we have $\kappa(-s)=\kappa(s)$, where $\kappa(s)$
is a curvature function of $C$ with respect to the
arc-length parametrization $\mb c$ of $C$ on $\mathbb I_l$.
By the definition of $\alpha_*(s)$, we have
$$
\kappa(-s)\cos \alpha_*(s)=
\mu(s)=\kappa(s)\cos \alpha_F(s)=\kappa(-s)\cos 
\alpha_{F}(s),
$$
that is,
$
\cos \alpha_*=\cos \alpha_F.
$
By (4) of Proposition \ref{cor:C2},
we can conclude $\alpha_*=\alpha_F$.
Then we have
$$
T\xi_F
=
\cos \beta_F \, T\mb e+\sin \beta_F\Big(
\cos \alpha_* \, T\mb n+\sin \alpha_* \,T\mb b\Big),
$$
where $\beta_F$ is the second angular function of $F$
and $\mb e,\,\mb n,\,\mb b$ are the unit tangent vector
field, the unit principal normal vector field and the
unit bi-normal vector field of $\mb c$, respectively.

 Since (cf. \eqref{eq:s2} and \eqref{eq:TNB}), we have
$$
T \mb n(s)=\mb n(-s)=\mb n^\sharp(s), \qquad
T \mb b(s)=-\sigma \mb b(-s)=\sigma \mb b^\sharp(s)
$$
and
\begin{align*}
T\xi_F(s)
&=
\cos(\pi-\beta_F(s)) \mb e^\sharp(s) \\
&\phantom{*****}+\sin(\pi-\beta_F(s))
\Big(
\cos (\sigma\alpha_*(s))\, \mb n^\sharp(s)+
\sin(\sigma \alpha_*(s)) \, \mb b^\sharp(s)
\Big).
\end{align*}
By
Proposition \ref{L0},
$T\xi_F(s)$
gives the normalized ruling vector field of $T\circ F$.
Thus $T\circ F(s,v)$ belongs to ${\mc D}_*(-C)$,
and its first and second angular functions are given by
$-\sigma\alpha_*(s)$ and $\pi-\beta_F(s)$, respectively.
Since the geodesic curvature function of $T\circ F(s,v)$
coincides with $\hat \mu(s)$ 
(cf. Proposition \ref{L0}),
$T\circ F$ must coincide with $F_*$ (resp. $\check F_*$)
if $T$ is a positive (resp. negative) symmetry of $C$.

We next suppose that $\mu_F$ has a symmetry.
By Proposition \ref{cor:C2}, $F^\sharp=\check F_*$ holds. 
If $\sigma=1$, then
$$
T\circ F(s,v)=F_*(s,v)=\check F^\sharp(s,v)=\check F(-s,v),
$$
which implies the second assertion of (1).
Similarly, considering the case of $\sigma=-1$,
we also obtain the second assertion of (2).
\end{proof}

\begin{Corollary}\label{cor:24}
Let $F\in {\mc D_*}(C)$
be  a normal form.
If $C$ has a non-trivial symmetry $T$,
then  
$\{F(\Omega_{\epsilon}),\check F(\Omega_{\epsilon})\}$
coincide with
$\{T\circ F_*(\Omega_{\epsilon}),T\circ \check F_*(\Omega_{\epsilon})\}$
for sufficiently small $\epsilon(>0)$.
\end{Corollary}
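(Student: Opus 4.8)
The plan is to deduce this from the two identities just established in Theorem~\ref{prop:24} ($T\circ F=F_*$ when $T$ is positive, $T\circ F=\check F_*$ when $T$ is negative) together with the commutation rule $T\circ\check G=\check{(T\circ G)}$ from Corollary~\ref{cor:Tf} and the involutivity of the dual $G\mapsto\check G$ from Corollary~\ref{Prop:1}. The one extra ingredient I would isolate first is that $T$ is an involution. Indeed, since $C=\mb c(\mathbb I_l)$ is a non-closed embedded arc and $T$ is a \emph{non-trivial} symmetry, $T\circ\mb c$ is again an arc-length parametrization of the same image, so it equals $\mb c(-s)$ (the orientation-preserving alternative would fix $C$ pointwise, contradicting non-triviality); in particular $T$ fixes the midpoint $\mb x_0=F(0,0)$. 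Then $T^2$ is an orientation-preserving isometry of $\R^3$ fixing $C$ pointwise, and because $\kappa$ never vanishes $C$ contains three non-collinear points, so the only positive isometry fixing it pointwise is the identity: $T^2=\op{id}$.

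In the positive case I would argue as follows. From $T\circ F=F_*$ and $T^2=\op{id}$ we immediately get $T\circ F_*=T^2\circ F=F$, hence $T\circ F_*(\Omega_\epsilon)=F(\Omega_\epsilon)$. For the companion strip I would apply Corollary~\ref{cor:Tf} with $-C$ in place of $C$ (legitimate, as $T$ is equally a symmetry of $|C|=|-C|$) to the normal form $F_*\in\mc D_*(-C)$, obtaining $T\circ\check F_*=\check{(T\circ F_*)}=\check F$, so $T\circ\check F_*(\Omega_\epsilon)=\check F(\Omega_\epsilon)$. Thus the unordered pair $\{T\circ F_*(\Omega_\epsilon),\,T\circ\check F_*(\Omega_\epsilon)\}$ coincides with $\{F(\Omega_\epsilon),\,\check F(\Omega_\epsilon)\}$.

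For the negative case I would run the symmetric computation. Theorem~\ref{prop:24} now gives $T\circ F=\check F_*$, whence $T\circ\check F_*=T^2\circ F=F$. To handle $T\circ F_*$ I would use that the dual is an involution to write $F_*=\check{(\check F_*)}=\check{(T\circ F)}$, and then Corollary~\ref{cor:Tf} (applied to $F\in\mc D_*(C)$) yields $\check{(T\circ F)}=T\circ\check F$; therefore $F_*=T\circ\check F$ and $T\circ F_*=T^2\circ\check F=\check F$. Again the two pairs of images agree, which proves the claim (the smallness of $\epsilon$ is only needed so that each of the four sets $F(\Omega_\epsilon),\check F(\Omega_\epsilon),F_*(\Omega_\epsilon),\check F_*(\Omega_\epsilon)$ is an embedded strip germ).

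The step I expect to require the most care is the justification that $T^2=\op{id}$: one must verify that non-triviality of the symmetry of an embedded arc forces $T\circ\mb c(s)=\mb c(-s)$ with a fixed midpoint, and that the nonvanishing of $\kappa$ excludes the degenerate possibility of $T^2$ being the reflection across a plane containing $C$. Once this is in hand, the remainder is purely formal: no fresh computation with the angular functions $\alpha_F,\beta_F$ is needed beyond what Theorem~\ref{prop:24} already provides, and the argument is a short manipulation with the two involutions and the single commutation identity of Corollary~\ref{cor:Tf}.
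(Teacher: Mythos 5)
Your proposal is correct and takes essentially the same route as the paper: both arguments deduce the corollary formally from Theorem \ref{prop:24}, the only difference being that the paper applies that theorem to both $F$ and $\check F$ directly (implicitly using that the inverse and dual operations commute), whereas you apply it only to $F$ and recover the companion identity via Corollary \ref{cor:Tf} together with the involutivity of the dual. Your careful verification that $T^2=\op{id}$ is a point the paper leaves to Proposition \ref{prop:03a} of its appendix, so your argument contains no gap and nothing superfluous.
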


\begin{proof}
By Theorem \ref{prop:24},
if $T$ is positive (resp. negative), then
$
T\circ F(s,v)=F_*(s,v)
$
(resp.
$
T\circ F(s,v)=\check F_*(s,v)
$)
and
$
T\circ \check F(s,v)=\check F_*(s,v)
$
(resp.
$
T\circ \check F(s,v)=F_*(s,v)).
$
\end{proof}

The following assertion is a refinement of \cite[Lemma 3.2]{Ori}:

\begin{Proposition}\label{cor:001}
Let $F\in {\mc D}_*(C)$ be a normal form.
Then $F(\Omega_{\epsilon})$ is congruent to
$\check F(\Omega_{\epsilon})$ 
for sufficiently small $\epsilon>0$ if and only if  
\begin{enumerate}
\item $C$ lies in a plane, or 
\item $C$ has a positive symmetry
and $\mu_F$ also has a symmetry. 
\end{enumerate}
\end{Proposition}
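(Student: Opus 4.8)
The plan is to prove the two directions of the equivalence separately, using Theorem \ref{prop:24} and Corollary \ref{cor:24} as the main tools. For the "if" direction, I would treat the two cases (1) and (2) independently. In case (1), when $C$ lies in a plane $\Pi$, Lemma \ref{prop:Pi} tells us that the reflection $T_0$ in $\Pi$ satisfies $T_0\circ f=\check f$ for every $f\in \mc D(C)$. Since $T_0$ is an isometry of $\R^3$, this immediately exhibits $\check F$ as the image of $F$ under a congruence, so $F(\Omega_\epsilon)$ and $\check F(\Omega_\epsilon)$ are congruent. In case (2), I would use Theorem \ref{prop:24}(1): if $C$ has a positive symmetry $T$, then $T\circ F=F_*$, and moreover, when $\mu_F$ has a symmetry, the refined formula $T\circ F(-s,v)=\check F(s,v)$ holds. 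Reading this as an identity of images, the isometry $T$ (composed with the parameter reflection $s\mapsto -s$, which only changes the parametrization, not the image) carries $\check F(\Omega_\epsilon)$ onto $F(\Omega_\epsilon)$, giving congruence.

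For the "only if" direction, I would assume $F(\Omega_\epsilon)$ is congruent to $\check F(\Omega_\epsilon)$ and seek to force (1) or (2). By definition there is an isometry $T$ of $\R^3$ with $T\circ \check F(\Omega_\epsilon)=F(\Omega_\epsilon)$. Applying Theorem \ref{prop:introfunctor}(5) to the pair $\check F, F$, I obtain a symmetry $S$ of $C$ such that $F$ is right equivalent to $S\circ \check F$. The key now is to analyze what symmetry $S$ can be. If $C$ does not lie in a plane, then I want to show $S$ must be a non-trivial positive symmetry and that $\mu_F$ must then have a symmetry. Using $S\circ \check F=F$ together with Corollary \ref{cor:24}, which states that $\{F(\Omega_\epsilon),\check F(\Omega_\epsilon)\}$ coincides with $\{S\circ F_*(\Omega_\epsilon),S\circ\check F_*(\Omega_\epsilon)\}$ when $S$ is non-trivial, I can match up the two developable strips and conclude via Proposition \ref{fact:f} and the characterization of symmetry in Proposition \ref{prop:ffs} that $\mu_F$ has a symmetry.

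\textbf{The main obstacle} I anticipate is the case analysis on the symmetry $S$ of $C$ in the "only if" direction: I must rule out the possibility that $S$ is a negative symmetry (which, by Theorem \ref{prop:24}(2), would give $S\circ F=\check F_*$ rather than the desired relation), and I must handle the borderline case where $S$ is trivial. If $S$ is trivial, then $F$ being right equivalent to $\check F$ forces, by Proposition \ref{fact:1}, that $\check F=F$ or $\check F=F^\sharp$; the former contradicts \eqref{eq:check} (since $\check F$ and $F$ meet only along $C$), so $\check F=F^\sharp$, which by Corollary \ref{cor:F_sharp} and Proposition \ref{prop:ffs} pins down a symmetry of $\mu_F$ and, together with $S$ trivial, is expected to force $C$ into a plane. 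Distinguishing the planar case from the genuinely spatial case with a positive symmetry is where the geometry of Theorem \ref{prop:24} does the real work, since the two refined identities $T\circ F(-s,v)=\check F(s,v)$ and $T\circ F(-s,v)=F(s,v)$ there correspond exactly to the positive and negative symmetry subcases.

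\textbf{In summary}, the skeleton is: (i) establish "if" directly from Lemma \ref{prop:Pi} and Theorem \ref{prop:24}(1); (ii) for "only if", extract a symmetry $S$ of $C$ from Theorem \ref{prop:introfunctor}(5); (iii) branch on whether $C$ is planar, whether $S$ is positive, negative, or trivial, using Theorem \ref{prop:24}, Corollary \ref{cor:24}, and Proposition \ref{prop:ffs} to translate each configuration into a statement about symmetries of $\mu_F$; and (iv) confirm that the only surviving spatial possibility is the positive symmetry with $\mu_F$ symmetric, matching condition (2).
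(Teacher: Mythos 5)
Your plan follows the same route as the paper: the ``if'' direction via Lemma \ref{prop:Pi} and Theorem \ref{prop:24}, and the ``only if'' direction by extracting a symmetry of $C$ that realizes the congruence and branching on its type, with the positive branch closed by Theorem \ref{prop:24}(1) and Proposition \ref{prop:ffs}. But there is a genuine gap at exactly the point you call ``the main obstacle'': the negative-symmetry branch is never actually excluded. Saying that Theorem \ref{prop:24}(2) would give $S\circ F=\check F_*$ ``rather than the desired relation'' is not a proof --- a mismatch of formulas is not a contradiction. Unless this branch is killed, the ``only if'' direction remains open: for all your argument shows, the congruence $T\circ\check F(\Omega_\epsilon)=F(\Omega_\epsilon)$ could be realized by a negative symmetry of a non-planar $C$, in which case neither condition (1) nor condition (2) would follow.

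Closing that branch is where the paper's proof does its real work. If $S$ is negative, then combining the congruence with Theorem \ref{prop:24}(2) and Proposition \ref{fact:1} yields $\check F=F_*$; Proposition \ref{prop:ffs} then forces $\mu_F$ to have a symmetry; and with that symmetry available, the second identity in Theorem \ref{prop:24}(2), namely $S\circ F(-s,v)=F(s,v)$, turns the congruence into $F(\Omega_\epsilon)=\check F(\Omega_\epsilon)$, contradicting \eqref{eq:check}. (Alternatively, applying Theorem \ref{prop:24}(2) to the normal form $\check F$ gives $S\circ\check F=F_*$, so your hypothesis makes $F$ right equivalent to $F_*$, which contradicts Proposition \ref{prop:star00} immediately.) Two smaller defects: first, Theorem \ref{prop:introfunctor}(5) cannot be cited for the extraction step, since its hypothesis concerns the folded images $\Phi_F(\Omega_\epsilon)$ --- where $C$ is the geometrically distinguished singular locus --- not the smooth strips $F(\Omega_\epsilon)$; what you need is the strip analogue, i.e.\ that the isometry $T$ carries $C$ onto $C$ (which follows, for instance, from $\bigcap_{\epsilon>0}F(\Omega_\epsilon)=C$ when one isometry works for all small $\epsilon$), a point the paper itself passes over silently. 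Second, your trivial case conflates a trivial symmetry with the identity: a trivial symmetry is a non-identity isometry fixing $C$ pointwise, whose mere existence forces $C$ into a plane, so case (1) holds at once; the detour through $\check F=F^\sharp$ is both unnecessary and unjustified (if $S$ is trivial, Lemma \ref{prop:Pi} gives $S\circ\check F=F$, so the relation ``$F$ is right equivalent to $S\circ\check F$'' carries no information and certainly does not yield $F$ right equivalent to $\check F$).
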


\begin{proof}
By Lemma \ref{prop:Pi} and (2) of Theorem~\ref{prop:24},
(1) or (2) implies that $F(\Omega_{\epsilon})$ is
congruent to $\check F(\Omega_{\epsilon})$.
To show the converse assertion, 
we suppose that $F(\Omega_{\epsilon})$ is
congruent to $\check F(\Omega_{\epsilon})$.
Then, there exists an isometry $T$ on $\R^3$
such that 
\begin{equation}\label{eq:AAA}
T\circ \check F(\Omega_{\epsilon})
=F(\Omega_{\epsilon}).
\end{equation}
By Lemma \ref{prop:Pi},
we may assume that $C$ does not lie in any plane.
It is sufficient to show (2).
Since $T$ must be non-trivial, that is, it reverses 
the orientation of $C$ (cf. Proposition~\ref{prop:03a}).
If $T$ is a negative symmetry, then
by Theorem~\ref{prop:24}, we have
$$
\check F(\Omega_{\epsilon})=
T\circ F(\Omega_{\epsilon})
=T\circ F^\sharp(\Omega_{\epsilon})=F_*(\Omega_{\epsilon}).
$$
By Proposition \ref{fact:1},
we have $\check F=F_*$, and
by Proposition \ref{prop:ffs}, $\Gamma$ has a symmetry.
Then, by Theorem~\ref{prop:24}, we have
$$
\check F(\Omega_{\epsilon})=
T\circ F(\Omega_{\epsilon})=F(\Omega_{\epsilon}),
$$
contradicting \eqref{eq:check}.
So $T$ is
a positive symmetry and
$T\circ F=F_*$ by Theorem~\ref{prop:24}.
By \eqref{eq:AAA}, we have
$$
F_*(\Omega_{\epsilon})=
T\circ F(\Omega_{\epsilon})
=\check  F(\Omega_{\epsilon}),
$$
which implies $F_*=\check F$.
By  Proposition \ref{prop:ffs},
$\mu_F$ has a symmetry.
So we obtain (2).
\end{proof}

\begin{Proposition}\label{cor:00a} Let 
$F\in {\mc D}_*(C)$ be a normal form.
Then, for sufficiently small $\epsilon (>0)$,
$F(\Omega_{\epsilon})$ is
congruent to $\check F_*(\Omega_{\epsilon})$
if and only if 
\begin{enumerate}
\item $C$ has a negative symmetry, or 
\item $\mu_F$ has a symmetry. 
\end{enumerate}
\end{Proposition}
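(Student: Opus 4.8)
The plan is to prove both implications by the method used for Proposition \ref{cor:001}, exploiting the fact that the inverse dual $\check F_*$ lives over the reversed curve $-C$ while $F$ lives over $C$, so that any isometry realizing the congruence must interchange the two orientations of the crease. Throughout I will use that $F_*$ and $\check F_*$ are again normal forms (Proposition \ref{cor:C2}) and that $\check F_*$ is the dual of $F_*$.

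For sufficiency, suppose first that $C$ admits a non-trivial negative symmetry $T$. Then Theorem \ref{prop:24}~(2) gives $T\circ F=\check F_*$, so $\check F_*(\Omega_\epsilon)=T\bigl(F(\Omega_\epsilon)\bigr)$ is congruent to $F(\Omega_\epsilon)$. If instead $\mu_F$ has a symmetry, then Corollary \ref{cor:F_sharp} yields $\check F_*=F^\sharp$; since $F^\sharp$ has the same image as $F$, we obtain $\check F_*(\Omega_\epsilon)=F(\Omega_\epsilon)$, and the congruence holds via the identity. Both computations are immediate once those earlier structural results are granted.

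For necessity, suppose $T\circ\check F_*(\Omega_\epsilon)=F(\Omega_\epsilon)$ for some isometry $T$ of $\R^3$. The first step is to argue that $T$ maps $C$ to itself: both sides are surface germs along the common curve $C$, and $C$ is the intersection of the shrinking neighborhoods $F(\Omega_\epsilon)$, so $T(C)\subseteq\bigcap_\epsilon F(\Omega_\epsilon)=C$ forces $T(C)=C$ (cf. Proposition \ref{prop:03a}). Hence $T$ is either the identity or a symmetry of $C$. If $T=\op{id}$, then $F$ is image equivalent, hence right equivalent (Proposition \ref{fact:1}), to $\check F_*$, and Proposition \ref{prop:ffs} shows that $\mu_F$ has a symmetry, giving (2). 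If $T$ is a negative symmetry we obtain (1); here one must check that $T$ cannot be the trivial planar reflection $T_0$, since Lemma \ref{prop:Pi} applied to $\check F_*\in\mc D(-C)$ gives $T_0\circ\check F_*=F_*$, which would force $F=F_*$ and contradict Proposition \ref{prop:star00}. Thus the negative symmetry produced is genuinely non-trivial, consistently with the fact that the planar case is the one accounted for in Proposition \ref{cor:001}.

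The remaining, and principal, case to eliminate is that $T$ is a (necessarily non-trivial) positive symmetry. Rewriting the hypothesis as $\check F_*=T^{-1}\circ F$ and applying Theorem \ref{prop:24}~(1) to the positive symmetry $T^{-1}$ gives $T^{-1}\circ F=F_*$, whence $\check F_*(\Omega_\epsilon)=F_*(\Omega_\epsilon)$. But $\check F_*$ is the dual of the normal form $F_*$, so Proposition \ref{prop:220} yields $F_*(\Omega_\epsilon)\cap\check F_*(\Omega_\epsilon)=C$, and a two-dimensional strip cannot coincide with $C$; this contradiction rules out the positive case and forces (1) or (2). I expect this positive-symmetry exclusion, together with the bookkeeping that separates the trivial planar reflection from a genuine negative symmetry, to be the technical heart of the argument, exactly parallel to the role of the negative case in the proof of Proposition \ref{cor:001}.
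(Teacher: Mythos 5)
Your proof is correct and takes essentially the same route as the paper's: sufficiency via Theorem \ref{prop:24}(2) and Corollary \ref{cor:F_sharp}, and necessity by reducing the congruence to a symmetry $T$ of $C$ and eliminating the identity, the trivial planar reflection, and the positive case using Propositions \ref{prop:ffs}, \ref{prop:star00} and \eqref{eq:check}. The only differences are organizational (you argue by direct case analysis rather than assuming $\mu_F$ has no symmetry, and you make explicit the step $T(C)=C$, which the paper leaves implicit).
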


\begin{proof}
By (2) of Theorem \ref{prop:24} and
Corollary \ref{cor:F_sharp},
(1) or (2) implies that
$F(\Omega_{\epsilon})$ is
congruent to $\check F_*(\Omega_{\epsilon})$.
So it is sufficient to show the converse.
We suppose that $F(\Omega_{\epsilon})$ is
congruent to $\check F_*(\Omega_{\epsilon})$.
We also suppose that
$\mu_F$ has no symmetries.
By Proposition~\ref{prop:ffs},
$F(\Omega_{\epsilon})\ne 
\check F_*(\Omega_{\epsilon})$ holds.
So $C$ must have a symmetry $T$ such that
$
T\circ F(\Omega_{\epsilon})
=\check F_*(\Omega_{\epsilon}).
$
If $T$ is not non-trivial, then $C$ lies in a plane $\Pi$
and $T$ is the reflection with respect to $\Pi$.
By Lemma \ref{prop:Pi}, we have 
$$
\check F(\Omega_{\epsilon})=T\circ F(\Omega_{\epsilon})
=\check F_*(\Omega_{\epsilon}),
$$
which implies
$
F(\Omega_{\epsilon})=
F_*(\Omega_{\epsilon}).
$
However, this is impossible by Proposition \ref{prop:star00}.
By Proposition~\ref{prop:03a} in the appendix,
we may assume $T$ is a non-trivial symmetry of $C$,
that is, $T$ is either a positive or negative symmetry.
If $T$ is a positive symmetry,  
Theorem \ref{prop:24} yields $T\circ F_*=F$ and
$$
F(\Omega_{\epsilon})
=T \circ F_*(\Omega_{\epsilon})=\check F(\Omega_\epsilon),
$$
contradicting \eqref{eq:check}.
So $T$ must be a negative symmetry.
\end{proof}

Similarly, the following assertion holds:

\begin{Proposition}\label{cor:00b}
Let $F\in {\mc D}_*(C)$ be a normal form.
Suppose that $C$ does not lie in any plane in $\R^3$.
Then for sufficiently small $\epsilon(>0)$,
$F(\Omega_{\epsilon})$ is
congruent to $F_*(\Omega_{\epsilon})$ if and only if $C$ has a positive symmetry. 
\end{Proposition}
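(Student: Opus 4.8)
The plan is to mirror the structure of the proofs of Propositions \ref{cor:001} and \ref{cor:00a}: use Theorem \ref{prop:24} to convert symmetries of $C$ into identifications among the four strips $F,\check F,F_*,\check F_*$, and use Proposition \ref{prop:star00} together with \eqref{eq:check} to keep these strips geometrically distinct. The sufficiency direction is immediate. Suppose $C$ has a positive symmetry $T$. Then Theorem \ref{prop:24}(1) gives $T\circ F=F_*$, so that $F_*(\Omega_\epsilon)=T\bigl(F(\Omega_\epsilon)\bigr)$ for all sufficiently small $\epsilon$; since $T$ is an isometry of $\R^3$, this exhibits $F(\Omega_\epsilon)$ and $F_*(\Omega_\epsilon)$ as congruent.

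For the necessity direction, assume $F(\Omega_\epsilon)$ is congruent to $F_*(\Omega_\epsilon)$, realized by an isometry $T$ with $T\bigl(F(\Omega_\epsilon)\bigr)=F_*(\Omega_\epsilon)$. First I would argue, exactly as in the proof of Proposition \ref{cor:00a} and via Proposition \ref{prop:03a} of the appendix, that $T$ restricts to a symmetry of $C$. Because $C$ does not lie in any plane, an isometry of $\R^3$ fixing $C$ pointwise must be the identity; but the identity is excluded, since Proposition \ref{prop:star00} gives $F(\Omega_\epsilon)\cap F_*(\Omega_\epsilon)=C$ and hence $F(\Omega_\epsilon)\ne F_*(\Omega_\epsilon)$. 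Therefore $T$ is a \emph{non-trivial} symmetry of $C$, so it is either positive or negative.

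It then remains only to rule out the negative case. If $T$ were a negative symmetry, then Theorem \ref{prop:24}(2) would yield $T\circ F=\check F_*$, whence $F_*(\Omega_\epsilon)=T\bigl(F(\Omega_\epsilon)\bigr)=\check F_*(\Omega_\epsilon)$. Applying \eqref{eq:check} to the normal form $F_*$ (which is a normal form by Proposition \ref{cor:C2}) gives $F_*(\Omega_\epsilon)\cap\check F_*(\Omega_\epsilon)=C$, so $F_*(\Omega_\epsilon)$ and $\check F_*(\Omega_\epsilon)$ are distinct surfaces — a contradiction. Hence $T$ must be a positive symmetry, and $C$ has a positive symmetry as claimed. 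Note that, in contrast with Proposition \ref{cor:00a}, no case distinction on whether $\mu_F$ has a symmetry is needed here, because the negative alternative is eliminated outright.

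The only genuinely delicate point, and the step I expect to be the main obstacle, is the claim that the congruence-realizing isometry $T$ actually preserves $C$, rather than merely carrying the surface $F(\Omega_\epsilon)$ onto $F_*(\Omega_\epsilon)$ while moving the core curve to some other curve on the image. This is exactly where Proposition \ref{prop:03a} is invoked, and where the hypothesis that $C$ is non-planar is used to discard the possibility that $T$ fixes $C$ pointwise. Once $T$ is known to be a non-trivial symmetry of $C$, the remainder is a formal consequence of Theorem \ref{prop:24} together with the disjointness statements in Proposition \ref{prop:star00} and \eqref{eq:check}.
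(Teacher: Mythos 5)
Your proof is correct and takes essentially the same route as the paper's: sufficiency via Theorem \ref{prop:24}(1), and necessity by using non-planarity together with Proposition \ref{prop:star00} to conclude that $T$ is a non-trivial symmetry of $C$, then eliminating the negative case with Theorem \ref{prop:24}(2) against a disjointness identity. The only (immaterial) difference is that you apply Theorem \ref{prop:24}(2) to $F$ and contradict \eqref{eq:check} for the pair $F_*,\check F_*$, whereas the paper applies it to $F_*$ (using that the symmetry is an involution) and contradicts \eqref{eq:check} for the pair $F,\check F$.
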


\begin{proof}
If $C$ has a positive symmetry, then (1) of Theorem \ref{prop:24} implies
that $F(\Omega_{\epsilon})$ is
congruent to $F_*(\Omega_{\epsilon})$.
So it is sufficient to prove the converse.
Suppose that there exists
an isometry $T$ satisfying 
$T\circ F(\Omega_{\epsilon})
=F_*(\Omega_{\epsilon})$.
Since $F(\Omega_{\epsilon})\ne F_*(\Omega_{\epsilon})$
(cf. Proposition \ref{prop:star00}),
$T$ is a non-trivial symmetry of $C$.
If $T$ is a negative symmetry, Theorem \ref{prop:24}
yields that
$T\circ F_*=\check F$.
Then we have
$$
F(\Omega_{\epsilon})=T\circ F_*(\Omega_{\epsilon})=\check F(\Omega_\epsilon),
$$
which contradicts \eqref{eq:check}.
So $T$ must be a positive symmetry of $C$.
\end{proof}

We prove the following assertion:

\begin{Theorem}\label{thm:d4}
Let $F\in {\mc D}_*(C)$ be a normal form.
If the normalized geodesic curvature of $F$
has no self-intersections, then the number $N_F$ of
congruence classes of 
$$
F(\Omega_{\epsilon}),\quad 
\check F(\Omega_{\epsilon}),\quad 
F_*(\Omega_{\epsilon}),\quad 
\check F_*(\Omega_{\epsilon})
$$
satisfies the following properties:
\begin{enumerate} 
\item If $C$ has no symmetries and $\mu_F$ has no symmetries, 
then $N_F=4$.
\item If not the case in {\rm (1)}, then $N_F\le 2$ holds. 
\item Moreover $N_F=1$ holds if and only if
\begin{enumerate}
\item[(a)] $C$ lies in a plane and has a non-trivial symmetry, 
\item[(b)] $C$ lies in a plane and $\mu_F$ has a symmetry, or
\item[(c)] $C$ has a positive symmetry,
and $\mu_F$ has a symmetry.
\end{enumerate}
\end{enumerate}
\end{Theorem}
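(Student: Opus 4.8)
The plan is to reduce everything to the pairwise congruence criteria already established. Write $P_1:=F(\Omega_\epsilon)$, $P_2:=\check F(\Omega_\epsilon)$, $P_3:=F_*(\Omega_\epsilon)$ and $P_4:=\check F_*(\Omega_\epsilon)$ for small $\epsilon>0$. Two preliminary observations organize the argument. First, if $C$ lies in a plane then the reflection through that plane fixes $|C|$ pointwise, so a planar $C$ always carries a (trivial) symmetry; hence the hypothesis ``$C$ has no symmetries'' forces $C$ to be non-planar. Second, by Proposition~\ref{prop:ffs}, $\mu_F$ has a symmetry if and only if $F$ and $\check F_*$ (resp.\ $\check F$ and $F_*$) are right equivalent, i.e.\ $P_1=P_4$ and $P_2=P_3$ as germs. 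With these in hand I would assemble the full table of the six pairwise congruences. Propositions~\ref{cor:001}, \ref{cor:00a} and \ref{cor:00b} give $P_1\cong P_2$, $P_1\cong P_4$ and (for non-planar $C$) $P_1\cong P_3$; applying the same three propositions to the admissible normal forms $\check F\in\mc D_*(C)$ and $F_*\in\mc D_*(-C)$ — using $(\check F)_*=\check F_*$, $\check{(\check F)}_*=F_*$, $\mu_{\check F}=\mu_{F_*}=\mu_F$, and the fact that $-C$ spans the same plane and has the same positive/negative symmetries as $C$ — yields $P_3\cong P_4$, $P_2\cong P_3$ and $P_2\cong P_4$. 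Thus the ``horizontal'' pairs $\{P_1,P_2\},\{P_3,P_4\}$ are governed by the criterion of Proposition~\ref{cor:001}, the ``diagonal'' pairs $\{P_1,P_4\},\{P_2,P_3\}$ by that of Proposition~\ref{cor:00a}, and the ``vertical'' pairs $\{P_1,P_3\},\{P_2,P_4\}$ by that of Proposition~\ref{cor:00b} when $C$ is non-planar (the planar case being treated by hand below).

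For assertion (1) I would read off the table: when $C$ is non-planar and neither $C$ nor $\mu_F$ has a symmetry, every one of the six criteria fails, so the four sets are mutually non-congruent and $N_F=4$. For assertion (2), assume we are not in case (1). If $\mu_F$ has a symmetry then $P_1=P_4$ and $P_2=P_3$ by Proposition~\ref{prop:ffs}, so there are at most two distinct sets. Otherwise $C$ has a symmetry; if $C$ is planar then Lemma~\ref{prop:Pi} gives $P_1\cong P_2$ and $P_3\cong P_4$, while if $C$ is non-planar its symmetry is non-trivial and Corollary~\ref{cor:24} pairs the four sets (into $\{P_1,P_3\},\{P_2,P_4\}$ in the positive case and $\{P_1,P_4\},\{P_2,P_3\}$ in the negative case). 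In every sub-case the four sets fall into at most two congruence classes, so $N_F\le 2$.

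For assertion (3) I would prove both implications. For sufficiency, each of (a),(b),(c) collapses all four sets into one class: under (a), planarity gives $P_1\cong P_2$ and $P_3\cong P_4$ while the non-trivial symmetry, being positive or negative, links the two pairs via Theorem~\ref{prop:24}; under (b), planarity gives $P_1\cong P_2,\ P_3\cong P_4$ and $\mu_F$-symmetry gives $P_1=P_4$; under (c), $\mu_F$-symmetry gives $P_1=P_4,\ P_2=P_3$ and the positive symmetry gives $P_1\cong P_3$ through Theorem~\ref{prop:24}. For necessity, suppose $N_F=1$. If $\mu_F$ has a symmetry, then $P_1\cong P_2$ together with Proposition~\ref{cor:001} forces $C$ to be planar (giving (b)) or to have a positive symmetry (giving (c)). If $\mu_F$ has no symmetry, then $P_1\cong P_2$ and Proposition~\ref{cor:001} force $C$ planar, and the further congruence $P_1\cong P_3$ is realised by an isometry $T$ with $T(P_3)=P_1$; since $P_1\ne P_3$ by Proposition~\ref{prop:star00}, $T$ is a symmetry of $C$ (Proposition~\ref{prop:03a}) which cannot be trivial — the identity would give $P_1=P_3$, while the reflection through the plane would give $T\circ F_*=\check F_*$ by Lemma~\ref{prop:Pi}, hence $P_1=P_4$ and so a symmetry of $\mu_F$ by Proposition~\ref{prop:ffs}, both excluded — so $C$ is planar with a non-trivial symmetry, which is (a).

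The main obstacle I anticipate is the bookkeeping for the ``vertical'' pairs when $C$ is planar, since Proposition~\ref{cor:00b} is stated only for non-planar $C$: there one must argue directly, as in the necessity step above, that a congruence $P_1\cong P_3$ (with $P_1\ne P_3$ guaranteed by Proposition~\ref{prop:star00}) is induced by a non-trivial symmetry of $C$, ruling out the identity and the planar reflection by hand. A secondary point requiring care is the legitimacy of transporting Propositions~\ref{cor:001}--\ref{cor:00b} from $F$ to $\check F$ and $F_*$: one must verify the operator identities $(\check F)_*=\check F_*$ and $\check{(\check F)}_*=F_*$, the invariances $\mu_{\check F}=\mu_{F_*}=\mu_F$, and that planarity and the positive/negative symmetry type of the point set $|C|$ are unchanged under reversal of orientation.
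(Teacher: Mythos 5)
Your proposal is correct and follows essentially the same route as the paper: both arguments reduce the theorem to the pairwise congruence criteria of Propositions \ref{cor:001}, \ref{cor:00a} and \ref{cor:00b}, combined with Lemma \ref{prop:Pi}, Proposition \ref{prop:ffs}, Proposition \ref{prop:star00}, Theorem \ref{prop:24} and Corollary \ref{cor:24}. The differences are purely organizational --- the paper handles assertion (1) by the device of replacing $F$ by $\check F$, $F_*$ or $\check F_*$ (implicitly using the operator identities you verify explicitly), and in the necessity part of (3) it routes the planar case through Proposition \ref{cor:00a} applied to the pair $F,\,\check F_*$ rather than your direct analysis of the pair $F,\,F_*$ --- and neither difference affects the substance.
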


\begin{proof}
We suppose (1).
Since $C$ has no symmetry, $C$ does not lie in any plane.
If $N_F<4$, then replacing $F$ by $\check F,F_*$ or 
$\check F_*$, we may assume that
$F(\Omega_\epsilon)$ is congruent to 
$G(\Omega_\epsilon)$ for sufficiently small $\epsilon(>0)$, where $G$ is
$\check F,F_*$ or $\check F_*$.
If $G=\check F$, then Proposition~\ref{cor:001} implies
that $C$ has a positive symmetry, a contradiction.
If $G=F_*$, then by
Proposition \ref{cor:00b}, $C$ has a positive symmetry, a contradiction.
If $G=\check F_*$ then by Proposition \ref{cor:00a}, 
$C$ has a positive symmetry or $\Gamma$ has a
symmetry, which is also a contradiction. So we obtain (1).

We now prove (2). 
If $\mu_F$ has a symmetry,
then $N_F\le 2$ follows from Theorem~\ref{thm:2G}.
So we may assume that $C$ has a symmetry $T$.
If $T$ is trivial, then
$C$ lies in a plane and $T$ is the reflection 
with respect to the plane (cf. Lemma \ref{prop:Pi}).
Then, we have
$$
T\circ F(\Omega_\epsilon)=\check F(\Omega_\epsilon),\quad
T\circ F_*(\Omega_\epsilon)=\check F_*(\Omega_\epsilon)
$$
for sufficiently small $\epsilon>0$, and so
$N_F\le 2$ is obtained.
We next consider the case that
$T$ is non-trivial.
Then Corollary \ref{cor:24} implies that $N_F\le 2$.

Finally, we prove (3). 
We consider the case (a) or (b).
Then $C$ lies in a plane, and
$F(\Omega_\epsilon)$  (resp. $F_*(\Omega_\epsilon)$) 
is congruent to $\check F(\Omega_\epsilon)$
 (resp. $\check F_*(\Omega_\epsilon)$) by Lemma \ref{prop:Pi}.
If (a) happens, then $C$ admits a positive
symmetry by Proposition \ref{lem:03a}.
So (1) of Theorem \ref{prop:24} implies
that $F(\Omega_\epsilon)$  
is congruent to
$F_*(\Omega_\epsilon)$, and
$N_F=1$ is obtained.
If (b) happens, then 
$F(\Omega_\epsilon)=\check F_*(\Omega_\epsilon)$
by Proposition \ref{prop:ffs}. So we have $N_F=1$.

We next consider the case (c).
By (1) of Theorem \ref{prop:24},
$F(\Omega_\epsilon)$  
is congruent to
$F_*(\Omega_\epsilon)$.
Moreover, since $\mu_F$ has a symmetry,
$F(\Omega_\epsilon)$ (resp. $\check F(\Omega_\epsilon)$)
is congruent to $\check F_*(\Omega_\epsilon)$
(resp. $F_*(\Omega_\epsilon)$), so we have
$N_F=1$.

Conversely, we suppose $N_F=1$. Then
$F(\Omega_\epsilon)$ must be congruent to 
$\check F_*(\Omega_\epsilon)$, and
so, Proposition \ref{cor:00a} implies that 
\begin{itemize}
\item[(i)] $C$ has a negative symmetry, or
\item[(ii)] $\Gamma$ has a symmetry.
\end{itemize}
If $C$ lies in a plane, then (i) corresponds to (iii-a)
and (ii) corresponds to (iii-b).
So we may assume that $C$ does not lie in any plane.
Since $N_F=1$, $F(\Omega_\epsilon)$
also congruent to $\check F(\Omega_\epsilon)$.
Since $C$ is non-planar, (2) of Propositions \ref{cor:001} holds,
which implies (c).
\end{proof}

We now prove Theorem B in the introduction:

\begin{proof}[Proof of Theorem B]
We fix a curved folding
$P\in \mc P_*(\Gamma,C)$ 
arbitrarily.
Then there exists a normal form $F\in \mc D_*(C)$
such that $P=\Phi_F$.
Then $\Gamma$ gives a generator of $F$.
By Lemma~\ref{lem:L}, the condition that $\Gamma$ has a
symmetry can be replaced by the condition that
$\hat \mu_F$ has a symmetry.
By Theorem \ref{prop:introfunctor}, Theorem B is obtained 
as a corollary of Theorem \ref{thm:d4}.
\end{proof}

As an application, we can prove the following:

\begin{Corollary}\label{cor:App}
Suppose that $C$ does not lie in any plane.
Let $f\in {\mc D}_*(C)$.
If the derivatives of
the curvature function $\kappa_f$ of $C$
and the geodesic curvature function 
$\mu_f$ of $f$
both do not vanish at the midpoint of $C$,
then 
$N_f=4$
and
the number of the congruence classes of
curved foldings in $\mathcal P_*(\Gamma,|C|)$ 
is also four, where $\Gamma$ is a generator of $f$.
\end{Corollary}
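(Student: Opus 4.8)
The plan is to verify that the hypotheses of Theorem \ref{thm:d4}(1) hold, namely that neither $C$ nor the geodesic curvature function $\mu_f$ admits a symmetry. Once this is established, Theorem \ref{thm:d4}(1) immediately gives $N_f=4$; and since $\Gamma$ has a symmetry if and only if $\mu_f$ does (Lemma \ref{lem:L}), Theorem B(1) applies to give that the number of congruence classes in $\mc P_*(\Gamma,|C|)$ is also four. At the outset I would replace $f$ by its associated normal form $F$ (Proposition \ref{prop:fF}), so that $\hat\mu_f=\mu_F$, the midpoint of $C$ corresponds to the parameter $s=0$ with $F(0,0)=\mb x_0$, and $s$ is the arc-length parameter. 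Since the arc-length derivatives at the midpoint are unchanged by this reparametrization (the Jacobian $du/ds$ is nonzero), the hypotheses become $\kappa'(0)\neq 0$ and $\mu_F'(0)\neq 0$, where $\kappa$ is the curvature function of $\mb c$.

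First I would show that $C$ has no symmetry. Since $C$ does not lie in any plane, it spans $\R^3$ affinely, so any isometry fixing $C$ pointwise is the identity; hence every symmetry of $C$ is non-trivial, and in particular $C$ carries no reflection in a plane. Let $T$ be a symmetry of $C$. As $T$ is an isometry, $s\mapsto T\circ\mb c(s)$ is again an arc-length parametrization of the embedded non-closed arc $C=\mb c(\mathbb I_l)$, so $T\circ\mb c(s)=\mb c(\sigma s+d)$ for some $\sigma\in\{1,-1\}$ and $d\in\R$; because both parametrizations have domain $\mathbb I_l=[-l/2,l/2]$, the affine map $s\mapsto \sigma s+d$ must carry $\mathbb I_l$ onto itself, forcing $d=0$. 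The case $\sigma=1$ gives $T\circ\mb c=\mb c$, hence $T=\mathrm{id}$, which is excluded; thus $T\circ\mb c(s)=\mb c(-s)$. Comparing curvatures (an isometry invariant) via the fundamental theorem of space curves then forces $\kappa(s)=\kappa(-s)$, so $\kappa$ is even and $\kappa'(0)=0$, contradicting $\kappa'(0)\neq 0$. Hence $C$ has no symmetry. I expect this rigidity step to be the main obstacle: one must argue carefully that, for a non-closed non-planar arc based at its midpoint, the only non-trivial reparametrization a symmetry can induce is $s\mapsto -s$, and this is exactly where the non-closedness (to pin the domain), the midpoint choice of base point, and the non-planarity (to exclude trivial symmetries) are all used.

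Next I would show that $\mu_F$ has no symmetry. By Definition \ref{ft-sym}, on the interval $\mathbb I_l$ a symmetry of $\mu_F$ is necessarily the reflection about the midpoint, i.e. it would require $\mu_F(s)=\mu_F(-s)$ for all $s\in\mathbb I_l$; such an identity forces $\mu_F'(0)=0$, contradicting $\mu_F'(0)\neq 0$. Therefore $\mu_F$ has no symmetry, and by Lemma \ref{lem:L} the generator $\Gamma$ of $f$ has no symmetry either. Combining the two conclusions, $C$ and $\mu_F$ have no symmetries, so Theorem \ref{thm:d4}(1) gives $N_f=4$, and Theorem B(1), whose hypotheses are now verified, gives that $\mc P_*(\Gamma,|C|)$ consists of exactly four congruence classes of curved foldings.
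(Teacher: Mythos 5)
Your proof is correct and follows essentially the same route as the paper's: a contrapositive argument showing that a symmetry of $C$ (resp.\ of $\Gamma$, equivalently of $\mu_F$) would force $\kappa(-s)=\kappa(s)$ (resp.\ $\mu_F(-s)=\mu_F(s)$) and hence a vanishing derivative at the midpoint, followed by an appeal to Theorem~\ref{thm:d4}(1) and Theorem~B(1), with the same observation that non-vanishing of derivatives is unaffected by reparametrization. The only difference is one of exposition: you derive the reduction of a symmetry to $s\mapsto -s$ from scratch (using non-planarity to exclude pointwise-fixing isometries and the interval domain to pin down the affine reparametrization), whereas the paper delegates this to its appendix (Proposition~\ref{prop:03a} and \eqref{eq:TNB}).
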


\begin{proof}
We let $l$ be the common total length of $C$ and $\Gamma$.
We denote by $\kappa$ (resp. $\mu$) 
the curvature function of $C$ (resp. $\Gamma$)
with respect to the arc-length parameter on $\mathbb I_l$.
Since $\kappa$ (resp. $\mu$) is positive-valued,
the existence of a symmetry of $C$ (resp. $\Gamma$)
implies
$$
\kappa(-s)=\kappa(s),\qquad (\text{resp.\,\,} \mu(-s)=\mu(s))
\qquad (s\in \mathbb I_l).
$$
Differentiating it at $s=0$, we have
\begin{equation}\label{eq:km}
\kappa'(0)=0,\qquad (\text{resp.\,\,} \mu'(0)=0).
\end{equation}

We may assume that $f$ is defined on a tubular neighborhood
of $I\times \{0\}$ in $I\times \R$.
In the parametrization $u\mapsto f(u,v)$ of $C$,
we suppose that
the point $s=0$ corresponds to the point $u=u_0\in I$.
Then \eqref{eq:km} is equivalent to the condition
that $d\kappa_f(u)/du$ (resp. $d\mu_f(u)/du$) 
does not vanish at $u=u_0$.
So, we obtain the conclusion.
\end{proof}

\begin{Remark}
For a given curve in $\R^2$ or $\R^3$,
it is difficult to judge that it
has a symmetry or not, in general.
So we gave Corollary \ref{cor:App}, which  can be 
considered as a practical criterion on whether 
a given developable strip (or a curved folding) 
induces mutually non-congruent isomers or not,
without use of the arc-length parametrization
of $C$. In fact, let
$f\in \mc D_*(C)$ be a developable strip
as in the proof of Corollary \ref{cor:App}.
As a preliminary step,  
one should show that the value $u_0\in I$ giving the 
midpoint $\mb c_f(u_0)$ of $C:=\mb c_f(I)$ 
lies in a certain subinterval $I_1$ in $I$.
(This subinterval is best to be taken as small as possible.)
After that if one can show that $d\kappa_f(u)/du$
and $d\mu_f(u)/du$ do not vanish 
at the same time on $I_1$,
then Corollary~\ref{cor:App} implies
that $N_f=4$.
\end{Remark}

\begin{Example}[\cite{Ori}]\label{Ex:Added901a}
Consider the space curve
$$
\mb c(s)=\left( 
\arctan s,\,\, \frac{\log(1+s^2)}{\sqrt{2}},\,\, s-\arctan s
\right)\qquad (s\in I),
$$
where $I:=[1/10,9/10]$.
We set
$
\alpha(s):={\pi(s+10)}/{24},
$
and consider the developable strip $F(s,t)$
whose first angular function is $\alpha(s)$.
It can be easily check that $F$ belongs to $\mc D_*(C)$.
Then the images of $F,\check F,F_*$ and
$\check F_*$ belongs to distinct congruence classes
as shown in \cite{Ori}.
This fact can be easily checked by applying
Corollary \ref{cor:App}.
In fact, the curvature and torsion functions of
$\mb c$ are given by
$
\kappa=\tau=\sqrt{2}/(1+s^2).
$
Since $\tau$ is non-constant, the image of
$\mb c$ does not lie in any plane.
Moreover, since $\kappa'>0$ everywhere
and
$\mu:=\kappa \cos \alpha$ satisfies
$$
\mu'(1/2)=-\frac{96 \sin \left({\pi}/{16}\right)+5 \pi  
\cos \left({\pi }/{16}\right)}{75 \sqrt{2}}<0.
$$
Thus, we have reproved the fact $N_{F}=4$.
The figures of the images of $F,\,\check F,\,F_*$ and
$\check F_*$
are given in \cite[Figure 3]{Ori}.
\end{Example}

Although, in the above example, the space curve $\mb c$
has an arc-length parametrization,
Corollary \ref{cor:App} can be applied 
without assuming $C$ is parametrized by arc-length:

\begin{Example}\label{Ex:Added901b}
We set $d\in (0,\pi/4)$
and $I:=[3\pi/8,5\pi/8]$.
Consider the embedded space curve defined by
$$
\mb c(t):=(\cos t,\sin(t+d),t) \qquad (t\in I).
$$
We let $f$ be the developable strip
whose first angular function is $\alpha:=\pi/3$ along $C:=\mb c(I)$.
Since $d\ne 0$, the torsion function of $\mb c$
is not identically equal to zero, that is, 
$C$ does not lie in any plane.
Since the curvature function $\kappa$ of $\mb c$ 
is given by
$$
\kappa(t)=\left(
\frac{3+\cos 2d+\cos2t-\cos(2t+2d)}{2(1+\sin^2t+\cos^2(t+d))^3}
\right)^{1/2},
$$
it has the following expansion
\begin{equation}\label{eq:k-series}
\kappa(t)=\frac12+\frac{\sin 2 t}2 d+o(d)
\end{equation}
with respect to the parameter $d$,
where $o(d)$ is a term of order higher than $d$.
Since the maximum of $\kappa/2(=\kappa\cos\alpha)$ on $I$
is less than the minimum of $\kappa(t)$ on $I$
for sufficiently small 
$d$ ,
we may assume that $f$ belongs to
$\mc D_*(C)$
(in fact, this holds if $d \le \pi/5$).
By \eqref{eq:k-series}, we have
$$
\kappa'(t)=(\cos 2 t) d+o(d).
$$
Since $\cos 2t$ is negative on $I$, the derivative
$\kappa'$ is negative on $I$ for sufficiently small $d$.
Since $\mu(t):=\kappa(t)/2$ is
the geodesic curvature function 
of $C$ along $f$, the derivatives
$\kappa'$ and $\mu'$ do not vanish at the
midpoint of $C$.
So, by Corollary \ref{cor:App},
the images of $f,\,\check f,\,f_*$ and
$\check f_*$ are mutually non-congruent
 for sufficiently small
$d(>0)$.
\end{Example}

In the authors' previous work,
non-congruence of
the images of $F,\,\check F,\,F_*$
and $\check F_*$ in Example
\ref{Ex:Added901a}
was shown by 
computing mean curvature functions of 
their associated developable strips.
However, this argument does not work  in general,
since the mean curvature functions of
$f,\check f,(\check f_*)^\sharp$
and $(f_*)^\sharp$ for $f\in \mc D_*(C)$
may not take distinct values along $C$
even when their images are non-congruent
each other, as follows:

\begin{Proposition}\label{prop:H}
Let $\kappa(s)$ be a $C^\infty$-function defined on an interval $I$
containing $s=0$,
and let $\alpha:I\to (0,\pi/2)$ be a $C^\infty$-function
satisfying
\begin{equation}\label{k1a1}
\kappa'(0)<0,\qquad \alpha'(0)\ge 0.
\end{equation}
Then there exists 
an embedded space curve $\mb c(s)$
defined on an interval $I_1(\subset I)$ containing $s=0$
satisfying the following properties:
\begin{enumerate}
\item $\mb c(s)$ gives an arc-length parametrization of $C:=\mb c(I_1)$
whose curvature function coincides with $\kappa$ on $I_1$.
\item Let $F(s,v)$ be the developable strip along $C$
 whose
first angular function is $\alpha(s)$.
Then $F$ belongs to $\mc D_*(C)$ for 
sufficiently small choice of $I_1$, and
the images of $F,\,\check F,\, F_*,\,\check F_*$ 
are non-congruent each other, but
\item the restriction of the
mean curvature function of $F(s,v)$ on $I_1\times \{0\}$
is equal to that of $\check F_*(-s,v)$.
\end{enumerate}
\end{Proposition}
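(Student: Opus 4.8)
The plan is to regard the torsion of the curve as the one free parameter at our disposal and to choose it so that the mean curvature identity (3) becomes an identity, while keeping the prescribed data $\kappa$ and $\alpha$ untouched. The point that makes this feasible is that, by \eqref{eq:as} in Proposition~\ref{cor:C2}, the first angular function of the inverse depends only on $\kappa$ and $\alpha$, and not on the torsion. Writing the eventual arc-length interval as $I_1=[-\delta,\delta]$ so that its midpoint is $s=0$, one has $\cos\alpha_{F_*}(s)=\big(\kappa(s)/\kappa(-s)\big)\cos\alpha(s)$ with $\alpha_{F_*}>0$, and by \eqref{eq:alpha-s} the reversed strip $\check F_*(-s,v)$ is again a developable strip along $C$, with the same curvature $\kappa$ and torsion $\tau$ as $C$ but with first angular function
\[
\beta(s):=\alpha_{F_*}(-s),\qquad \cos\beta(s)=\frac{\kappa(-s)}{\kappa(s)}\cos\alpha(-s).
\]
Thus both $F(s,v)$ and $\check F_*(-s,v)$ are normal forms along $C$, and by \eqref{eq:H} their mean curvatures along $C$ are given by the same formula with $(\kappa,\tau)$ fixed and first angular functions $\alpha$ and $\beta$, respectively.

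First I would record the $1$-jet of $\beta$ at the midpoint. Since $\cos\beta(0)=\cos\alpha(0)$ we get $\beta(0)=\alpha(0)$, and differentiating the defining relation gives
\[
\beta'(0)=\frac{2\kappa'(0)\cos\alpha(0)}{\kappa(0)\sin\alpha(0)}-\alpha'(0).
\]
Using the hypotheses $\kappa'(0)<0$ and $\alpha'(0)\ge 0$, this yields $\alpha'(0)-\beta'(0)>0$; in particular $\beta\neq\alpha$ for small $s\neq 0$. This first-order discrepancy between $\alpha$ and $\beta$ is precisely what will later force the four strips apart, and it is the only place where \eqref{k1a1} is used.

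Next I would convert the desired identity $|H_F(s)|=|H_{\check F_*}(-s)|$ into an equation for $\tau$. Setting the two instances of \eqref{eq:H} equal and clearing denominators (all of $\kappa,\sin\alpha,\sin\beta$ are positive) produces a quadratic $\Psi(s,\tau)=a(s)\tau^2+b(s)\tau+c(s)=0$ whose coefficients involve only $\kappa,\alpha,\beta$ and their first derivatives, with $a=\sin\beta-\sin\alpha$ and $b=2(\sin\beta\,\alpha'-\sin\alpha\,\beta')$. At $s=0$ the leading coefficient $a(0)$ vanishes, so the quadratic degenerates; but $\partial_\tau\Psi(0,\cdot)=b(0)=2\sin\alpha(0)\big(\alpha'(0)-\beta'(0)\big)\neq0$ by the previous step. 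Hence the implicit function theorem furnishes a unique $C^\infty$ solution $\tau(s)$ on a small $I_1$ with
\[
\tau(0)=-\frac{\alpha'(0)+\beta'(0)}{2}=-\frac{\kappa'(0)\cos\alpha(0)}{\kappa(0)\sin\alpha(0)}>0.
\]
The strict positivity of $\tau(0)$ is the crucial dividend: it forces the curve we are about to build to be non-planar near the midpoint.

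Finally I would assemble the statement. By the fundamental theorem of space curves there is an arc-length curve $\mb c$ on $I_1$, embedded after shrinking $I_1$, with curvature $\kappa$ and the torsion $\tau$ just constructed; this gives (1), and by the very choice of $\tau$ the equality in (3) holds. For (2), admissibility $F\in\mc D_*(C)$ holds on a sufficiently small $I_1$ because $\cos\alpha<1$ there; non-planarity follows from $\tau(0)\neq0$; and the $1$-jet computation gives $\kappa'(0)\neq0$ together with
\[
\mu'(0)=\kappa'(0)\cos\alpha(0)-\kappa(0)\sin\alpha(0)\,\alpha'(0)<0.
\]
Therefore Corollary~\ref{cor:App} applies and shows that the images of $F,\check F,F_*,\check F_*$ are mutually non-congruent, i.e.\ $N_F=4$. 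I expect the main obstacle to be exactly the degeneracy at $s=0$: because $\alpha(0)=\beta(0)$ the torsion equation loses its leading term there, so one cannot simply read $\tau$ off the quadratic formula and select a one-sided branch. The clean remedy is the implicit function theorem applied to $\Psi$, which is licensed by the explicit computation of $\beta'(0)$; the same computation simultaneously certifies $\tau(0)\neq0$. Checking that $\tau$ stays real and that $F$ remains admissible on $I_1$ is then routine once $I_1$ is taken small enough.
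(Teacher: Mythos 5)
Your proposal is correct and takes essentially the same route as the paper's proof: both reduce (3) to the same quadratic equation in the torsion $\tau$, whose coefficients depend only on $\kappa$, $\alpha$ and the angular function of $\check F_*(-s,v)$, both exploit that at $s=0$ the quadratic coefficient vanishes while the linear coefficient is nonzero (exactly where \eqref{k1a1} enters), and both conclude (2) via Corollary \ref{cor:App}. The only deviations are minor: where the paper extracts the smooth root explicitly as $\tau=-B_2/\bigl(B_1+\sqrt{B_1^2-B_0B_2}\bigr)$ you invoke the implicit function theorem at the degenerate point, and your computation $\tau(0)=-\kappa'(0)\cot\alpha(0)/\kappa(0)>0$ explicitly certifies the non-planarity hypothesis of Corollary \ref{cor:App}, a point the paper leaves implicit.
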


The curvature functions of the space curves
and the first angular functions of the developable strips
given in  Example \ref{Ex:Added901b} satisfy \eqref{k1a1}.

\begin{proof}
We let $l>0$ be a number so that
$\mathbb I_l=[-l/2,l/2](\subset I)$.
Consider a $C^\infty$-function $\tau(t)$ defined on $\mathbb I_l$.
Then there exists a unique regular space curve 
$\mb c:\mathbb I_l\to \R^3$ whose curvature 
and torsion functions are $\kappa(s)$
and $\tau(s)$ ($s\in \mathbb I_l$), respectively.
Let $F(s,v)$ be the normal form  of a developable strip
whose angular function is $\alpha(s)$.
If we choose sufficiently small $l$, then
$F$ belongs to $\mc D_*(C)$ because $0<\alpha(0)<\pi/2$.
By \eqref{k1a1}, 
$
\mu:=\kappa\cos \alpha
$
satisfies 
$$
\mu'(0)=
 \kappa'(0)\cos \alpha(0)-\kappa(0) \alpha'(0) \sin \alpha(0)<0.
$$
This, with the fact $\kappa'(0)<0$,
$F$ satisfies the assumption of Corollary \ref{cor:App},
and we obtain (1) and (2).

We next adjust the torsion function $\tau$ to obtain (3):
The first angular function $A(s)$ of $F_1(s,v):=\check F_*(-s,v)$
is given by (cf. \eqref{eq:s1} and \eqref{eq:as})
\begin{equation}\label{eq:At}
A(s)=\arccos \left(\frac{\kappa(-s)\cos \alpha(-s)}{\kappa(s)}\right).
\end{equation}
We let $H(s,v)$ and $H_1(s,v)$ be the mean curvature functions
of $F(s,v)$ and $F_1(s,v)$, respectively. 
 By \eqref{eq:H}, the condition $H(s,0)=H_1(s,0)$ 
is equivalent to the identity
\begin{equation}\label{eq:Tau}
\frac{\kappa^2+(\alpha'+\tau)^2}{\sin \alpha}=
\frac{\kappa^2+(A'+\tau)^2}{\sin A},
\end{equation}
which can be considered as the quadratic equation of $\tau$.
In fact, \eqref{eq:Tau} can be written as
$$
B_0(s) \tau^2(s)+2B_1(s) \tau(s)+B_2(s)=0,
$$
where 
\begin{align*}
B_0&:=\csc \alpha-\csc A,\quad
B_1:=\alpha' \csc \alpha-A'\csc A, \\
B_2&:={\Big((\alpha')^2+\kappa^2 \sin^2 \alpha\Big) \csc \alpha-
\Big((A')^2+\kappa^2 \sin^2 A\Big)\csc A}.
\end{align*}
By \eqref{eq:At}, we have
$$
A(0)=\alpha(0), \qquad
A'(0)=\frac{2 \kappa'(0)\cot \alpha(0)}{\kappa(0)}-\alpha'(0),
$$
which imply (cf. \eqref{k1a1})
$$
B_0(0)=0,\qquad B_1(0)=\frac{-2\csc \alpha(0)}{\kappa(0)}
\Big({\kappa'(0)\cot \alpha(0)}-\kappa(0)\alpha'(0)
\Big)>0.
$$
If we set
\begin{equation}\label{eq:tau}
\tau=\frac{-B_1+\sqrt{B_1^2-B_0B_2}}{B_0}=
\frac{-B_2}{B_1+\sqrt{B_1^2-B_0B_2}},
\end{equation}
then $\dy\lim_{s\to 0}\tau(s)$ is equal to
$-B_2(0)/(2B_1(0))$. In particular, $\tau$
gives a $C^\infty$-function on $\mathbb I_l$ for sufficiently small $l$.
So  (3) is obtained.
\end{proof}

\section{The case that $C$ is closed}
In this section, we consider the case $J=\mathbb T^1_l$, that is,
$C$ is a closed embedded curve in $\R^3$.
We fix an immersed curve $\gamma:\R\to \R^2$ with
arc-length parameter whose
curvature function $\mu(s)$ of $\gamma(s)$ 
is an $l$-periodic smooth function $\tilde \mu:\R\to \R$
satisfying
$$
\dy\max_{s\in [0,l]}|\mu(s)|< \dy\min_{w\in \mathbb T^1_l}\kappa(w).
$$
We let $\Gamma (\subset \R^2)$ 
be the image $\gamma([b_0,b_0+l])$  
of the plane curve $\gamma$ for some $b_0\in [0,l)$
such that $\gamma([b_0,b_0+l))$ has no self-intersections.
Here, such a $\Gamma$ is not uniquely determined,
since the possibilities for $\Gamma$ have the ambiguity of the choice of 
the value $b_0$
($\Gamma$ and $C$ have the same length).

If $P_*(\Gamma,C)$ is defined as in the introduction,
then, for each $P\in \mc P_*(\Gamma,C)$,
we can write $P=\Phi(F)$ for a developable strip
$F\in \mc D_*(C)$ written in a normal form
such that $\mu(s)$
coincides with the lift $\tilde \mu_F$
of the normalized curvature function 
$\mu_F$ of $F$ (see the beginning of Section~2).
 
\begin{Remark}\label{rem:Gamma}
The plane curve
$\Gamma:=\gamma([b_0,b_0+l])$ 
is prepared as the crease pattern
of curved foldings belonging to $\mc P_*(\Gamma,C)$.
So $\Gamma$ should not have any self-intersection.
However, this property depends on the choice of $b_0\in [0,l)$.
In fact, we consider the trochoid $\gamma$ defined by
\begin{equation}\label{eq:Cyc}
\gamma(t):=\left(2t/3-\sin t, 1-\cos t\right)
\qquad (t\in \R).
\end{equation}
Then  its curvature function is $2\pi$-periodic, and 
two subarcs $\Gamma_1=\gamma([0,2\pi])$
and $\Gamma_2=\gamma([\pi,3\pi])$
can be considered as fundamental pieces of 
the trochoid (cf. Figure~\ref{fig:SI}).
The arc $\Gamma_2$ has a self-intersection
but $\Gamma_1$ does not.
\end{Remark}

\begin{figure}[thb]
\begin{center}
\includegraphics[height=2cm]{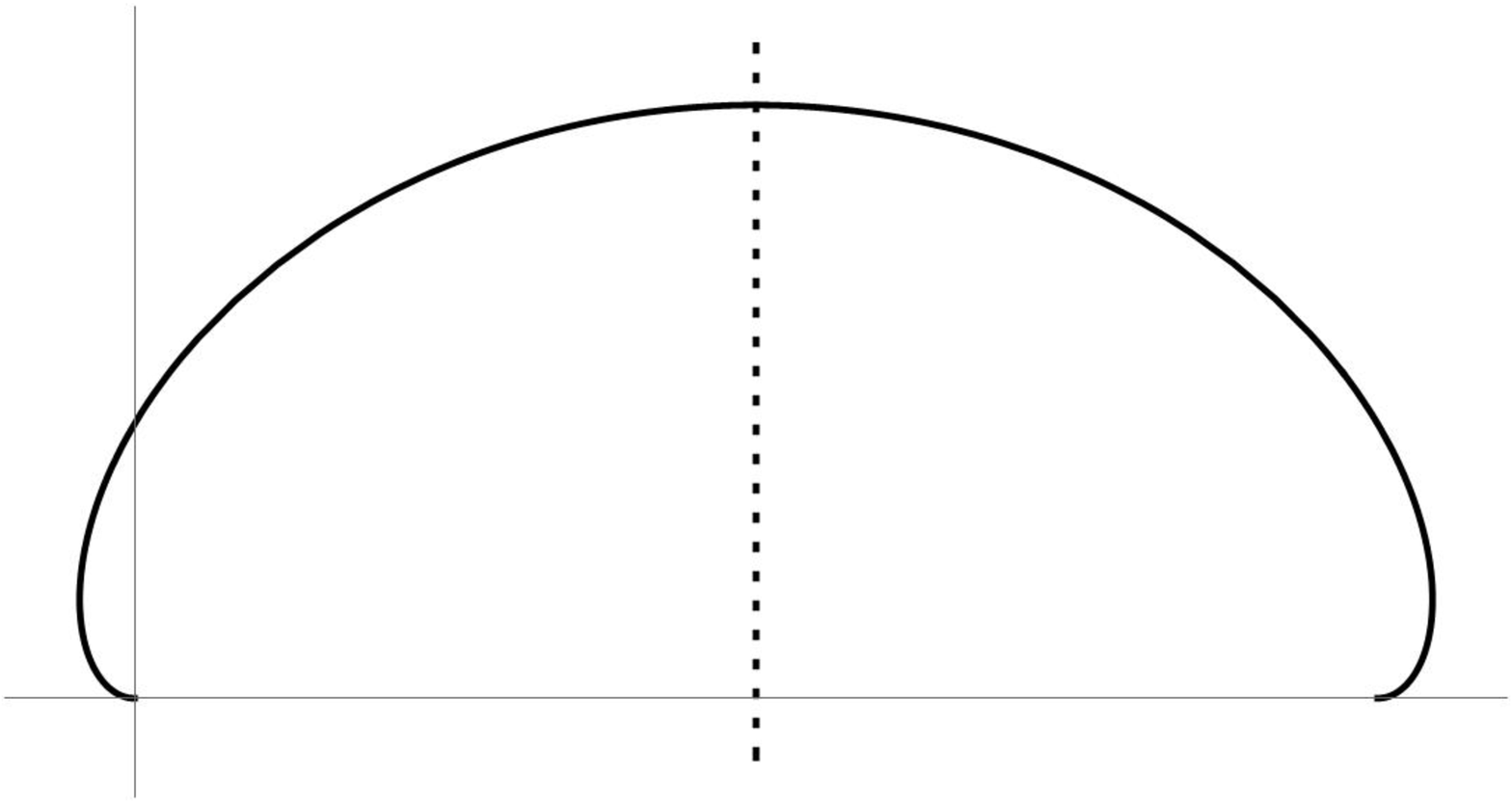}\qquad \quad
\includegraphics[height=2cm]{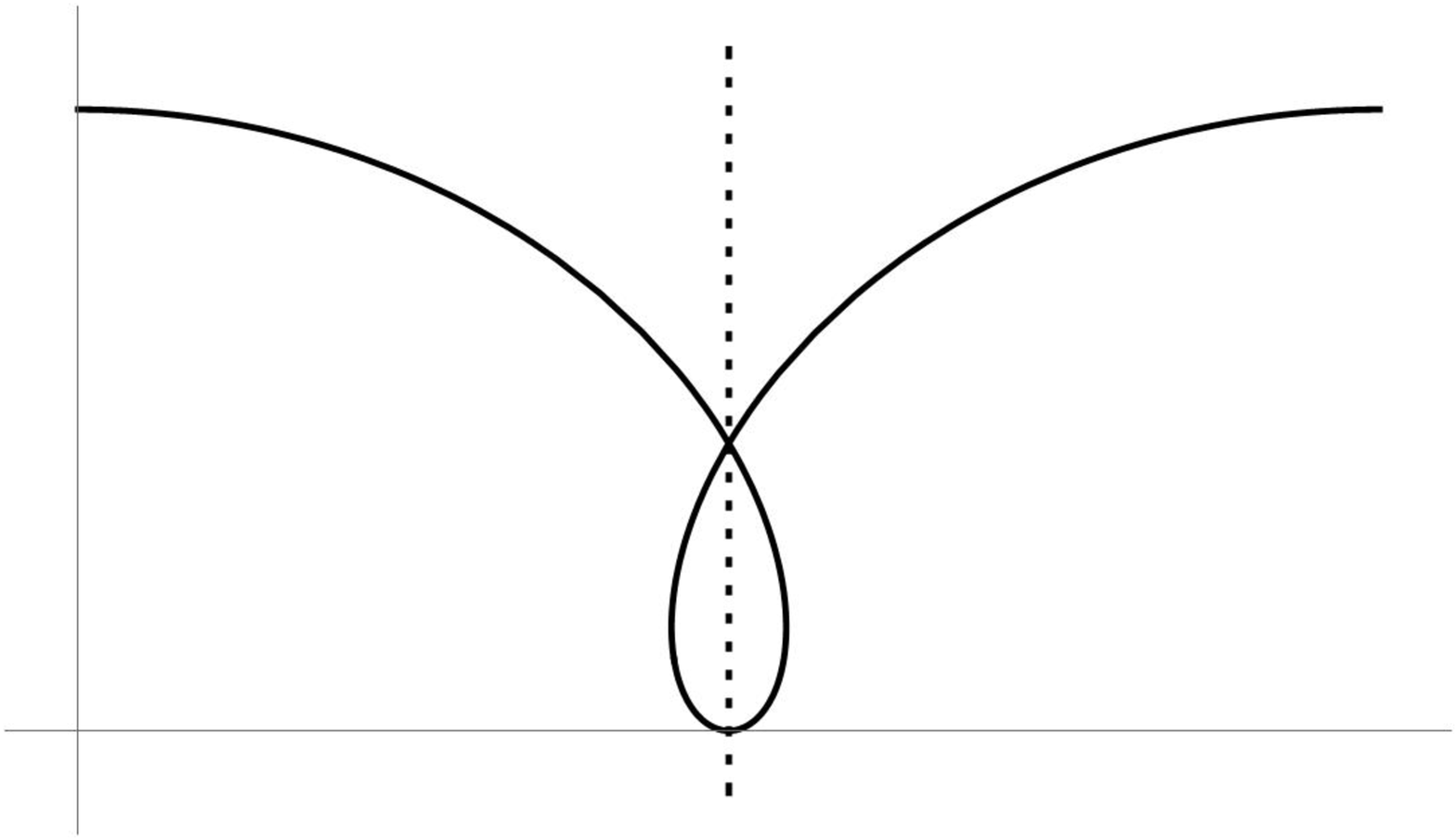}
\caption{The shape of $\Gamma_1$ (left) and
$\Gamma_2$ (right)}\label{fig:SI}
\end{center}
\end{figure}

We fix a normal form $F\in \mc D_*(C)$
such that $\Gamma$ is a generator of $F$.
Without loss of generality, we may assume that $F$ is defined on 
a tubular neighborhood of $\mathbb T^1_l\times \{0\}$
in $\mathbb T^1_l\times \R$.
We assume that the geodesic curvature function
$\mu_F$ of $F$ coincides with $\mu$.
For each fixed $b\in [0,l)$ and $\sigma\in \{1,-1\}$, we set
$$
\tilde{\mb c}(s):=\mb c(\sigma s+b),
$$
which parametrizes the curve $C:=\mb c(\mathbb T^1_l)$.
Since $F\in \mc D_*(C)$, we can apply
Proposition \ref{prop:Equi} for the curve $\tilde C:=\tilde {\mb c}(\mathbb T^1_l)$.
Since $\sigma=\pm 1$, we obtain 
normal forms $F_b^i\in \mc D_*(C)$ ($i=1,2,3,4$) 
whose first angular functions 
$\alpha^i_{b}\in C^\infty_{\pi/2}(\mathbb T^1_l)$ 
satisfy
$\alpha^1_b=-\alpha^2_b>0$, $\alpha^3_b=-\alpha^4_b>0$
and
\begin{align}\label{eq:Alpha}
& (\mu_{F_b^1}=)\kappa(s+b)\cos \alpha^1_{b}(s)=\mu_F(s),\quad
(\mu_{F_b^2}=)\kappa(s+b)\cos \alpha^2_{b}(s)=\mu_F(s), \\
\nonumber
& (\mu_{F_b^3}=)\kappa(-s+b)\cos \alpha^3_{b}(s)=\mu_F(s),\quad
(\mu_{F_b^4}=)\kappa(-s+b)\cos \alpha^4_{b}(s)=\mu_F(s),
\end{align}
where $\kappa(s)$ is the curvature function of
$\mb c(s)$ ($s\in \mathbb T_l$).
By definition, these four developable strips are all geodesically 
equivalent to $F$, and satisfy
$$
F_b^1(s,0)=F_b^2(s,0)=\mb c(s+b),
\qquad F_b^3(s,0)=F_b^4(s,0)=\mb c(-s+b)
$$
and $F^1_0=F$.
The following assertion holds:

\begin{Proposition}\label{prop:(a)}
Let $G\in \mc D_*(|C|)$ be a
normal form. If $G$ is geodesically equivalent to $F$,
then there exist $b\in [0,l)$ and $j\in \{1,2,3,4\}$
such that $G=F_b^j$. 
\end{Proposition}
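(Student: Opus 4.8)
The plan is to recover $G$ from three invariants---its base curve, its normalized geodesic curvature, and the sign of its first angular function---and to read off $(b,j)$ by matching these against the data of the $F_b^i$ recorded in \eqref{eq:Alpha}.

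First I would use that $G$ is a normal form: $s\mapsto G(s,0)$ is an arc-length parametrization of the closed embedded curve $|C|=\mb c(\mathbb T^1_l)$. Any two arc-length parametrizations of an embedded closed curve differ only by the choice of starting point and of orientation, so there are unique $\sigma\in\{1,-1\}$ and $b\in[0,l)$ with $G(s,0)=\mb c(\sigma s+b)$. This is exactly the reparametrized base curve $\tilde{\mb c}(s)=\mb c(\sigma s+b)$ to which Proposition \ref{prop:Equi} was applied in building the $F_b^i$; thus $G$ shares its base curve, and its base point $\mb c(b)$, with $F_b^1,F_b^2$ when $\sigma=1$ and with $F_b^3,F_b^4$ when $\sigma=-1$. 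In particular the value of $b$ in the conclusion is forced, and $\sigma$ already narrows the candidate $j$ to one of two values.

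Next I would bring in geodesic equivalence. By definition $\hat\mu_G$ is equi-affine equivalent to $\hat\mu_F=\mu_F$, that is $\mu_G=\mu_F\circ\phi$ for an affine map $\phi(s)=\sigma'' s+d''$ on $\mathbb T^1_l$, whereas each $F_b^i$ has geodesic curvature exactly $\mu_F$ by \eqref{eq:Alpha}. The heart of the matter---and the step I expect to be the main obstacle---is to upgrade this to the genuine equality $\mu_G=\mu_F$ of functions on $\mathbb T^1_l$. The subtle point is that the affine reparametrization $\phi$ of the curvature and the affine reparametrization $s\mapsto\sigma s+b$ of the base curve are not independent: both are read off the same arc-length parameter through the common base point $\mb c(b)$, and reconciling them forces $\phi$ to be either the identity or a symmetry of $\mu_F$, each of which yields $\mu_G=\mu_F$. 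Equivalently, one may first pass from $G$ to a right-equivalent normal form whose normalized geodesic curvature is literally $\mu_F$; since this leaves the image germ, hence the associated origami map, unchanged, it is harmless for the intended application to Theorem~C. This is precisely where closedness and the arc-length normalization are used; without them the shift $d''$ would remain free.

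Finally, with $G(s,0)=\mb c(\sigma s+b)$ and $\mu_G=\mu_F$ in hand, take the member $F_b^{i_0}$ of $\{F_b^i\}$ whose base curve equals that of $G$ (so $i_0=1$ if $\sigma=1$ and $i_0=3$ if $\sigma=-1$). Then $G$ and $F_b^{i_0}$ are two normal forms with a common base point and a common geodesic curvature, so Proposition \ref{fact:f} gives $G=F_b^{i_0}$ or $G=\check F_b^{i_0}$. Since duality negates the first angular function (Corollary \ref{Prop:1}) and $\alpha^2_b=-\alpha^1_b$, $\alpha^4_b=-\alpha^3_b$, the dual $\check F_b^{i_0}$ is the neighboring strip $F_b^{i_0+1}$; thus $G$ coincides with one of these two, the sign of the (constant, nowhere-zero) function $\alpha_G$ deciding which. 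This yields $G=F_b^j$ for a unique $j\in\{1,2,3,4\}$, completing the argument.
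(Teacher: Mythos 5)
You correctly isolate where the difficulty lies, but the step you flag as ``the heart of the matter'' is a genuine gap, and it cannot be closed, because the claimed forcing is false. In a normal form the base parametrization $s\mapsto G(s,0)$ and the first angular function $\alpha_G$ are independent pieces of data; nothing ties the affine map $\phi$ in $\mu_G=\mu_F\circ\phi$ to the affine map $s\mapsto\sigma s+b$ of the base curve. Concretely, suppose $\mu_F$ is non-constant and pick $d\in(0,l)$ with $\mu_F(\,\cdot\,+d)\ne\mu_F$. Admissibility of $F$ gives $0<\mu_F(s+d)\le\max\mu_F<\min\kappa\le\kappa(s)$, so $\alpha_G(s):=\arccos\bigl(\mu_F(s+d)/\kappa(s)\bigr)$ belongs to $C^\infty_{\pi/2}(\mathbb T^1_l)$, and Proposition \ref{prop:U} produces the normal form $G:=F^{\alpha_G}\in\mc D_*(C)$ with $G(s,0)=\mb c(s)$ and $\mu_G(s)=\mu_F(s+d)$. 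This $G$ is geodesically equivalent to $F$, its base-curve reparametrization is the identity ($\sigma=1$, $b=0$), yet $\mu_G\ne\mu_F$: no reconciliation takes place. Moreover this $G$ equals no $F^j_b$ at all, since equality with $F^1_b$ or $F^2_b$ would force $b=0$ (injectivity of $\mb c$) and then $\mu_G=\mu_F$ by \eqref{eq:Alpha}, while equality with $F^3_b$ or $F^4_b$ is impossible because those traverse $C$ in the opposite orientation; $G$ is merely right equivalent to $F^1_{l-d}$, via $(s,v)\mapsto(s-d,v)$. Consequently your fallback---replacing $G$ by a right-equivalent normal form whose geodesic curvature is literally $\mu_F$---is not an equivalent reformulation of the forcing claim; it is the only viable move, and it proves right equivalence rather than the stated equality $G=F^j_b$.

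It is worth knowing that the paper's own proof stumbles on the mirror image of the same point: it reads $\sigma$ and $b$ off the equi-affine relation between the curvature functions (not off the base curve), normalizes $\sigma=1$ by passing to $F^\sharp$, sets $G_1(s,v):=G(s+b,v)$ so that $\mu_{G_1}=\mu_F$ holds on the nose, and then applies Proposition \ref{fact:f} to conclude $G_1=F$ or $\check F$. But after that shift the base parametrizations of $G_1$ and $F$ need no longer agree, which is exactly the hypothesis Proposition \ref{fact:f} requires; in the example above $G_1=F^1_{l-d}$, which is neither $F$ nor $\check F$. What either route genuinely establishes is that $G$ is right equivalent to some $F^j_b$: your final matching step (common base curve, common geodesic curvature, Proposition \ref{fact:f}-type rigidity, and the constant sign of $\alpha_G$ selecting $j$) is sound for that purpose, and that weaker statement is precisely what is recorded in Theorem \ref{thm:48}~(a) and all that Theorem~C needs, since curved foldings are images and images are unchanged under right equivalence. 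So your proposal, like the paper's argument, proves the proposition only up to right equivalence; the literal equality asserted in the statement fails, and no argument can rescue it.
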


\begin{proof}
Since $G$ is geodesically equivalent to $F$,
$
\mu_G(\sigma s+b)=\mu_F(s)
$
holds for some $\sigma\in \{1,-1\}$ and $b\in \mathbb T^1_l$.
Replacing $F$ by $F^\sharp$ if necessary, we may assume that
$\sigma=1$. We set $G_1(s,v):=G(s+b,v)$.
Then we have $\mu_{G_1}(s,0)=\mu_F(s,0)$ and $F,G\in \mc D_*(C)$.
By Proposition \ref{fact:f},
$G_1$ must coincide with $F$ or $\check F$,
which proves the assertion because of
the definition of the families $\{F_{b}^j\}$. 
\end{proof}

\begin{Corollary}\label{prop:(a2)}
Let $G\in \mc D_*(|C|)$ be a
normal form. If $G(\Omega_{\epsilon})$ 
is congruent to $F(\Omega_{\epsilon})$ for sufficiently small 
$\epsilon(>0)$, then there exist an isometry $T$ of $\R^3$ and
$b\in [0,l)$ and $\sigma\in \{1,-1\}$
such that $G=T\circ F_b^j$. 
\end{Corollary}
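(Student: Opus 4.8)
The plan is to reduce this congruence statement to the purely intrinsic Proposition~\ref{prop:(a)}. First I would unwind the hypothesis: that $G(\Omega_{\epsilon})$ is congruent to $F(\Omega_{\epsilon})$ means there is an isometry $S$ of $\R^3$ with $S\bigl(G(\Omega_{\epsilon})\bigr)=F(\Omega_{\epsilon})$ for sufficiently small $\epsilon>0$. Writing $T:=S$, the composition $T\circ G$ is again a developable strip, and it has the same image as $F$, namely $(T\circ G)(\Omega_{\epsilon})=F(\Omega_{\epsilon})$. The base curve of $T\circ G$ is $T(C)$, while that of $F$ is $C$, so the whole problem reduces to showing that $T$ carries the base curve of $G$ onto that of $F$; once $T(C)=C$ is known, the intrinsic machinery already in place will finish the argument.

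The hard part will be exactly this recovery of the base curve from the image of a single strip, since, unlike the origami map $\Phi_F$ whose crease $C$ is its singular set, the smooth surface $F(\Omega_{\epsilon})$ carries no obvious marking of $C$. Here I would exploit the ruled structure. By \eqref{eq:H} the mean curvature of a normal form never vanishes, so $F(\Omega_{\epsilon})$ is nowhere planar and its rulings form the unique family of straight line segments lying in it; moreover $F$ is by definition an embedding, so these segments do not overlap. In the normal form $F(s,v)=\mb c(s)+v\,\xi_F(s)$ with $v\in(-\epsilon,\epsilon)$, each maximal ruling segment has Euclidean midpoint $\mb c(s)\in C$, so $C$ is recovered from the image as the locus of midpoints of the maximal ruling segments. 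The same description applies to $G$, and since $T$ is an affine isometry it sends ruling segments to ruling segments preserving midpoints, so the midpoint locus of $T\circ G$ is $T(C)$. As $(T\circ G)(\Omega_{\epsilon})=F(\Omega_{\epsilon})$, the two midpoint loci coincide, giving $T(C)=C$; thus $T$ is a symmetry of $|C|$ (possibly the identity).

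Once $T$ is a symmetry of $|C|$, the remainder is routine. Replacing $G$ by $G^{\sharp}$ if necessary I may assume $G\in\mc D_*(C)$, and then Proposition~\ref{L0} shows that $T\circ G$ is again a normal form, lying in $\mc D_*(|C|)$ because admissibility is preserved by isometries. Since $T\circ G$ and $F$ have the same image, Proposition~\ref{fact:1} makes them right equivalent, and Proposition~\ref{prop:2eq} then makes them geodesically equivalent. Applying Proposition~\ref{prop:(a)} to $T\circ G$ yields $b\in[0,l)$ and $j\in\{1,2,3,4\}$ with $T\circ G=F_b^{\,j}$, where $\sigma=1$ for $j\in\{1,2\}$ and $\sigma=-1$ for $j\in\{3,4\}$ records the orientation with which $T$ maps $C$ (cf.~\eqref{eq:Alpha}). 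Rewriting this as $G=T^{-1}\circ F_b^{\,j}$ and renaming $T^{-1}$ as the required isometry gives the assertion. The only genuinely new input beyond Proposition~\ref{prop:(a)} is the midpoint characterization of the base curve, which is where I expect the real work to lie.
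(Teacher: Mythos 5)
Your proof is correct, and its skeleton—pass to $T\circ G$, get right equivalence from Proposition \ref{fact:1}, geodesic equivalence from Proposition \ref{prop:2eq}, then invoke Proposition \ref{prop:(a)}—is the same reduction the paper uses. The genuine difference is that you prove the one step the paper silently skips: the paper's proof says ``by Propositions \ref{prop:U} and \ref{L0}, $T\circ F$ is right equivalent to $G$,'' but both cited propositions presuppose that the isometry carries $|C|$ onto $|C|$ (Proposition \ref{L0} is stated only for symmetries of $C$, and Proposition \ref{prop:U} only compares two strips along the same curve), and nothing in the paper's proof derives $T(C)=C$ from the bare equality of the two smooth strip images, which indeed carry no singular set marking the crease. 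Your midpoint-of-maximal-rulings characterization supplies exactly this missing step, and it is sound: by \eqref{eq:H} one has $H\neq 0$ along $C$, hence by continuity and compactness of $J$ on all of $\Omega_{\epsilon}$ for small $\epsilon$, so every straight segment in the image is asymptotic and therefore a sub-segment of a ruling; injectivity of the strip on $\overline{\Omega_{\epsilon}}$ (again for small $\epsilon$) shows the maximal segments are precisely the rulings $\{F(s,v):|v|<\epsilon\}$, whose midpoints sweep out exactly $C$. What this buys you is that the corollary holds under the weakest reading of the hypothesis (congruence for a single small $\epsilon$, with the isometry allowed to depend on $\epsilon$); the paper's shorter argument is complete only if one either inserts your step or reads the hypothesis as one fixed isometry working for all small $\epsilon$, in which case $T(C)=\bigcap_{\epsilon}T(F(\Omega_{\epsilon}))=\bigcap_{\epsilon}G(\Omega_{\epsilon})=C$ comes for free. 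One last remark: the $\sigma$ in the statement really refers to the form $G(s,v)=T\circ F(\sigma s+b,v)$ that is what gets used in the proof of Theorem \ref{thm:48}; your argument delivers this as well, since the right equivalence of $T\circ G$ and $F$ coming from Proposition \ref{fact:1} produces a diffeomorphism $\phi$ with $\phi(s,0)=\sigma s+b$, exactly as in the paper's own proof.
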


\begin{proof}
Since $G(\Omega_{\epsilon})$ is congruent to $F(\Omega_{\epsilon})$,
there exists an isometry $T$ of $\R^3$ such that
$G(\Omega_{\epsilon})=T\circ F(\Omega_{\epsilon})$.
By Propositions \ref{prop:U} and \ref{L0},
$T\circ F$ is right equivalent to $G$.
In particular, there exists a diffeomorphism
$\phi$ such that $T\circ F\circ \phi=G$.
Since $s\mapsto T\circ F\circ \phi(s,0)$ and $s\mapsto G(s,0)$
are both arc-length parametrizations,
there exist $b\in [0,a)$ and $\sigma\in \{1,-1\}$
such that $\phi(s,0)=\sigma s+b$.
In particular, $T\circ F$
is geodesically equivalent to $G$,
and so we obtain the conclusion applying
Proposition~\ref{prop:(a)}.
\end{proof}

We prove the following assertion:

\begin{Theorem}\label{thm:48}
Let $F\in \mc D_*(C)$ and
$
\{F^i_{b}\}_{b\in \mathbb T^1_l}
$
$(i=1,2,3,4)$ 
be defined as above.
Then the following assertions hold:
\begin{enumerate}
\item[(a)]
If $g\in \mc D_*(|C|)$ is geodesically equivalent to $F$, then
there exist $b\in [-l/2,l/2)$ and $j\in \{1,2,3,4\}$
such that
$g$ is right equivalent to $F_b^j$.
\item[(b)] Suppose that $C$ is not a circle and 
$\mu_F$ is not a constant function. 
Then, for each 
$\{i,b\}_{(i,b)\in \{1,2,3,4\}\times \mathbb T^1_l}$, 
the set
\begin{align*}
\Lambda^i_{b}&:=\Big\{(j,c)\in \{1,2,3,4\}\times \mathbb T^1_l \,;\, \\
& \phantom{aaaaa} \text{$F^j_{c}(\Omega_{\epsilon})$ 
is congruent to $F_b^i(\Omega_{\epsilon})$ for sufficiently 
small $\epsilon(>0)$}\Big\}
\end{align*}
is finite. 
\item[(c)] Suppose that $C$ and 
$\mu_F$ have no symmetries, 
then for each $i\in \{1,2,3,4\}$ and 
$b\in [-l/2,l/2)$, 
the set $\Lambda^i_{b}$ consists of a single element.
\end{enumerate}
\end{Theorem}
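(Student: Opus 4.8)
The plan is to reduce all three assertions to the single question of when two members $F^i_b$ and $F^j_c$ of the family have congruent images $F^i_b(\Omega_\epsilon),\,F^j_c(\Omega_\epsilon)$, and to control the isometries that can realize such congruences. Assertion (a) is then almost immediate: given $g\in\mc D_*(|C|)$ geodesically equivalent to $F$, I replace $g$ by its associated normal form $G$ (Proposition \ref{prop:fF}). Since right equivalence implies geodesic equivalence (Proposition \ref{prop:2eq}), $G$ is a normal form in $\mc D_*(|C|)$ geodesically equivalent to $F$, so Proposition \ref{prop:(a)} gives $G=F^j_b$ for some $j$ and some $b\in\mathbb T^1_l$, which may be represented in $[-l/2,l/2)$. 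As $g$ is right equivalent to $G=F^j_b$, (a) follows.

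The engine for (b) and (c) is a rigidity statement for the core curve. Each $F^i_b$ is an embedding and the images $F^i_b(\Omega_\epsilon)$ decrease to $\bigcap_{\epsilon>0}F^i_b(\Omega_\epsilon)=C$ as $\epsilon\downarrow 0$; consequently any isometry $T$ with $T\circ F^i_b(\Omega_\epsilon)=F^j_c(\Omega_\epsilon)$ must satisfy $T(C)=C$, i.e.\ $T$ lies in the symmetry group $\op{Sym}(C):=\{T\in\op{Isom}(\R^3)\,;\,T(C)=C\}$. The geometric heart of the proof is that $\op{Sym}(C)$ is compact (the linear parts lie in $O(3)$ and, as $C$ is bounded, the translation parts are bounded) and is moreover \emph{finite unless $C$ is a circle}: an infinite compact subgroup of $\op{Isom}(\R^3)$ contains a one–parameter subgroup, which, fixing a bounded set, can only consist of rotations about a fixed axis, and invariance of the connected curve $C$ under all these rotations forces $C$ to be a circle. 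For (c), where $C$ has no symmetry, this gives $\op{Sym}(C)=\{\op{id}\}$, hence $T=\op{id}$ and $F^i_b(\Omega_\epsilon)=F^j_c(\Omega_\epsilon)$. Both being normal forms, Proposition \ref{fact:1} yields $F^i_b=F^j_c$ or $F^i_b=(F^j_c)^\sharp$. In the first case the injectivity of $\mb c$ on $\mathbb T^1_l$ forbids opposite core orientations and forces $b=c$, whereupon the sign of the first angular function forces $i=j$; in the second case $\sharp$ turns the normalized geodesic curvature $\mu_F(s)$ into $\mu_F(-s)$, so $F^i_b=(F^j_c)^\sharp$ would make $\mu_F$ satisfy $\mu_F(s)=\mu_F(-s)$, a symmetry excluded by hypothesis. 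Thus $\Lambda^i_b=\{(i,b)\}$, which is (c).

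For (b), since $C$ is not a circle, $\op{Sym}(C)$ is finite. For each $T\in\op{Sym}(C)$, Proposition \ref{L0} shows $T\circ F^i_b$ is again a geodesically equivalent normal form, so by Proposition \ref{prop:(a)} it coincides with a single family member $F^{\Theta(T)}$, the uniqueness coming again from the injectivity of $\mb c$; hence $T\mapsto\Theta(T)$ has finite image. Every $(j,c)\in\Lambda^i_b$ is realized by some $T\in\op{Sym}(C)$ with $F^j_c(\Omega_\epsilon)=F^{\Theta(T)}(\Omega_\epsilon)$, so $F^j_c\in\{F^{\Theta(T)},(F^{\Theta(T)})^\sharp\}$, and each of these two normal forms agrees with at most one family member by the argument used in (c). Therefore $\Lambda^i_b$ injects into a finite set and is finite. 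The hypothesis that $\mu_F$ is non–constant is the internal counterpart of the non–circularity of $C$: it rules out the degenerate crease pattern that is a circular arc, and it is the form under which the companion uncountability statement (that $\mc P_*(\Gamma,|C|)$ contains uncountably many congruence classes, each $\Lambda^i_b$ being finite while $\{1,2,3,4\}\times\mathbb T^1_l$ is uncountable) is phrased.

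The main obstacle I anticipate is the geometric lemma that a compact connected space curve which is not a circle has a finite symmetry group; once this is in hand, everything else is bookkeeping with the reversal involution $\sharp$ and the injectivity of $\mb c$. A secondary point requiring care is to verify that a single isometry $T$ realizes the germ–level congruence for all sufficiently small $\epsilon$ at once, so that the identity $T(C)=C$ obtained from $\bigcap_{\epsilon>0}F^i_b(\Omega_\epsilon)=C$ is legitimate; this follows from the rigidity of developable strip germs recorded in Proposition \ref{fact:1} (and is packaged, for the comparison with $F$ itself, in Corollary \ref{prop:(a2)}).
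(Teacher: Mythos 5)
Your parts (a) and (c) are essentially the paper's argument ((a) is immediate from Proposition \ref{prop:(a)}; (c) reduces via the congruence isometry being a symmetry of $C$, hence the identity, and then kills the remaining ambiguity with the hypothesis that $\mu_F$ has no symmetries). Your Lie-theoretic proof that $\op{Sym}(C)$ is finite unless $C$ is a circle is a legitimate alternative to the paper's Proposition \ref{prop:Finite}, which proves the same fact by a Sard/regular-value argument. But part (b) has a genuine gap, and it is located exactly where you wave it away: your proof never uses the hypothesis that $\mu_F$ is non-constant, and you explicitly claim it is only needed for the companion uncountability statement. That cannot be right, because (b) is \emph{false} without it. Take $C$ a closed non-circular curve (an ellipse, say) and $\mu_F\equiv m$ a constant with $0<m<\min\kappa$. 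Then $\cos\alpha^1_b(s)=m/\kappa(s+b)=\cos\alpha_F(s+b)$, so by uniqueness of normal forms $F^1_b(s,v)=F(s+b,v)$, and hence $F^1_b(\Omega_\epsilon)=F(\Omega_\epsilon)$ as sets for \emph{every} $b\in\mathbb T^1_l$; thus $\Lambda^1_0\supset\{(1,b)\,;\,b\in\mathbb T^1_l\}$ is uncountable even though $\op{Sym}(C)$ is finite. So finiteness of $\op{Sym}(C)$ alone can never prove (b).

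The step of yours that this counterexample breaks is ``$F^j_c(\Omega_\epsilon)=F^{\Theta(T)}(\Omega_\epsilon)$, so $F^j_c\in\{F^{\Theta(T)},(F^{\Theta(T)})^\sharp\}$.'' Proposition \ref{fact:1} (and Proposition \ref{prop:U}) only yields the dichotomy $F=G$ or $F=G^\sharp$ when the two normal forms share the base point $\mb x_0$; in the closed case $J=\mathbb T^1_l$, equality of images only gives right equivalence, i.e.\ $F^j_c(s,v)=F^{\Theta(T)}(\sigma s+\delta,v)$ with $\sigma\in\{1,-1\}$ and an \emph{arbitrary} rotation $\delta\in\mathbb T^1_l$. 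Comparing normalized geodesic curvatures, such a pair $(\sigma,\delta)$ is precisely a symmetry of $\mu_F$, and these form a finite set only when $\mu_F$ is non-constant (a closed proper subgroup of $\mathbb T^1_l$, or equivalently the paper's regular-value argument). This is exactly how the paper's proof of (b) proceeds: after using Corollary \ref{prop:(a2)} to write $F^{j}_{b_n}(s,v)=T_n\circ F(\sigma s+d_n,v)$, it chooses a regular value $\mu(s_0)$ of the non-constant function $\mu$ to force the shifts $d_n$ into a finite set, and only then invokes Proposition \ref{prop:Finite} to bound the $T_n$ and $b_n$. Your argument is missing this entire step, and (the counterexample shows) it cannot be patched without it; the same base-point imprecision occurs in your (c), but there it is harmless because the hypothesis that $\mu_F$ has no symmetries kills the rotational ambiguity $\mu_F(s)=\mu_F(\sigma s+\delta)$ directly.
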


\begin{proof}
Although the strategy of the proof is essentially the same as 
the corresponding proof for cuspidal edges given in 
\cite{HNSUY2}, we need several modifications, since
the real analyticity of $C$ is assumed in \cite{HNSUY2} and 
the corresponding proof for cuspidal edges
does not apply the properties of their normal forms.

The first assertion (a) follows from
Proposition \ref{prop:(a)}. So we prove (b).
Without loss of generality, we may set $F=F_b^i$.
Suppose that the congruence classes of $\Lambda^i_{b}$ are not finite. 
By Proposition \ref{prop:(a)},
there exist a sequence $F_n:=F^{j_n}_{b_n}$
($n\ge 1$) in $\mc D_*(|C|)$ such that 
\begin{itemize}
\item $\{(j_n,b_{n})\}_{n=1}^\infty$ consisting of  
mutually distinct elements in $\{1,2,3,4\}\times \mathbb T^1_l$, and
\item $F_n(\Omega_{\epsilon})$ ($n\ge 1$)
are all congruent to $F(\Omega_{\epsilon})$.
\end{itemize}
Taking a subsequence of $\{(j_n,b_{n})\}_{n=1}^\infty$
if necessary, we may assume that 
\begin{enumerate}
\item[(A1)] $j:=j_n$ does not depend
on $n$, and 
\item[(A2)] $\{b_{n}\}_{n=1}^\infty$ consists of
mutually distinct numbers. 
\end{enumerate}
By Corollary \ref{prop:(a2)}, for each $n$,
there exist an isometry 
$T_n$ of $\R^3$, $d_n\in [0,l)$ and 
$\sigma_n\in \{1,-1\}$ such that
$F_n(s,v)=T_n\circ F(\sigma_n s+d_n, v)$.
Then we have the identity
\begin{equation}\label{eq:fn2}
\mu(s)=\mu(\sigma_n s+d_n)\qquad (s\in \mathbb T^1_l).
\end{equation}
Since the number of possibilities for the signs $\sigma_n$ 
is at most two, we may set $\sigma:=\sigma_n$.
Since $\mu$ is not constant, 
we can take $s_0\in [-l/2,l/2)$ such that $\mu(s_0)$ 
is a regular value of the function $\mu$ on $\mathbb T^1_l$.
Substituting $s=s_0$, \eqref{eq:fn2}
yields
$
\mu(s_0)=\mu(\sigma s_0+d_n).
$
Since $\mathbb T^1_l$ is compact and $\mu(u_0)$ is a regular value of $\mu$, 
the possibilities for $\{d_n\}_{n=1}^\infty$ are finite, and we may assume
$d:=d_n$ does not depend on $n$.
So we have
$$ 
F(\sigma' s+b_n,v)=F_n(s,v)=T_n\circ F(\sigma s+d, v)
$$
for $n\ge 1$, where $\sigma'=-1$ if $j\in \{1,2\}$ and
$\sigma'=+1$ if $j\in \{3,4\}$.
Substituting $v=0$, we have
$
F_n(\sigma' s+b_n,0)=T_n\circ F(\sigma s+d, 0).
$
Since $C$ is not a circle, the number of symmetries of $C$ is 
finite (see Proposition~\ref{prop:Finite} in the appendix).
So, the number of possibilities for $b_n$ and $T_n$ are also finite,
contradicting (A2), and we obtain (b).

Finally, we prove (c). If the assertion fails, 
then we can choose $(j,c) \in \Lambda$.
By Corollary \ref{prop:(a2)},
there exist
$T$ of $\R^3$, $b\in [0,l)$ and $\sigma\in \{1,-1\}$
such that
\begin{equation}\label{eq:F4TF}
F^j_c(s,v)=T\circ F(\sigma s+b,v).
\end{equation} 
Substituting $v=0$, 
it can be easily check that $T(C)=C$.
Since $C$ admits no symmetries, $T$
must be the identity map. So \eqref{eq:F4TF}
reduces to
$
F^j_c(s,v)=F(\sigma s+b,v).
$
Since $\mu_F$ coincides with $\mu_{F^j_c}$ 
(cf. \eqref{eq:Alpha}),
we have
$
\mu_F(s)=\mu_F(\sigma s+b)\,\, (s\in \mathbb T^1_l).
$
Since $\mu_F$ has no symmetries,
we have $(\sigma,b)=(1,0)$, that is, $F^j_c=F$ holds,
proving the assertion (c).
\end{proof}

\begin{proof}[Proof of Theorem C]
Like as in the case of Theorem B,
Theorem C is obtained by applying 
Theorem \ref{thm:48} and
Theorem \ref{prop:introfunctor}.
Condition (3) is not used to prove Theorem C, which is
required for the definition of the set $\mc P_*(\Gamma,|C|)$. 
\end{proof}

\medskip
From the viewpoint of curved foldings,
the most interesting case is that the crease pattern 
$\Gamma$ is a closed curve without self-intersections as well as the crease $C$:

\begin{Example}\label{ex:cm}
Consider a curve 
$$
\mb c_m(t):=\Big( (2+\cos m t)\cos t, (2+\cos m t)\sin t,\sin m t\Big)
$$
lying on a rotationally symmetric torus in $\R^3$.  
We denote by $L_m$ the length of $\mb c_m$.
Then for each $m\ge 2$, the inequality 
$\dy\min_{t\in [0,2\pi]}\kappa_m(t)>2\pi/L_m$,
holds, where $\kappa_m(t)$ is the curvature of $\mb c_m(t)$.
So if $\Gamma$ is an ellipse of length $L_m$
which is sufficiently close to
a circle, then ${\mc P}_*(\Gamma,C_m)$ is non-empty.
For example, consider an ellipse
$
\gamma(t):=(\cos t,a \sin t)
$
for $a:=6/5$,
whose curvature function is given by 
$
\mu(t)=a(\sin^2 t+a^2 \cos^2 t)^{-3/2}.
$
We set $m=3$ and
$$
\tilde \gamma(t):=k \gamma(t),\qquad 
k:=L_3/l^{}_\gamma \approx 3.31,
$$
where $l^{}_\gamma$ is the length of $\gamma$.
Then the two curves $\tilde \gamma$ and $\mb c_3$
have a common length.
We reparametrize $\tilde\gamma$ by the arc-length parameter 
$s$ ($0\le s\le L_3$)
so that $\tilde\gamma(0)=(k,0)$. 
Let $F\in {\mc D}_*(C_3)$ be the developable
strip along $C_3$ such that $\mu_F(t)=\hat \mu(t)$, where
$
\hat \mu(s)
$
is the curvature function of $\hat \gamma(s):=\tilde \gamma(t(s))$.
Figure \ref{fig:closed} indicates the crease patterns of
the curved foldings induced 
by $F$ (left), $F^1_{b}$ (center) and 
$F^1_{2b}$ (right) for $b=1/8$ 
via 
the map $\Phi$, respectively (cf. \eqref{eq:Alpha} and
  Definition \ref{def:p-ps}\rm). 
\end{Example}

\begin{figure}[htb]
\begin{center}
\includegraphics[height=3.9cm]{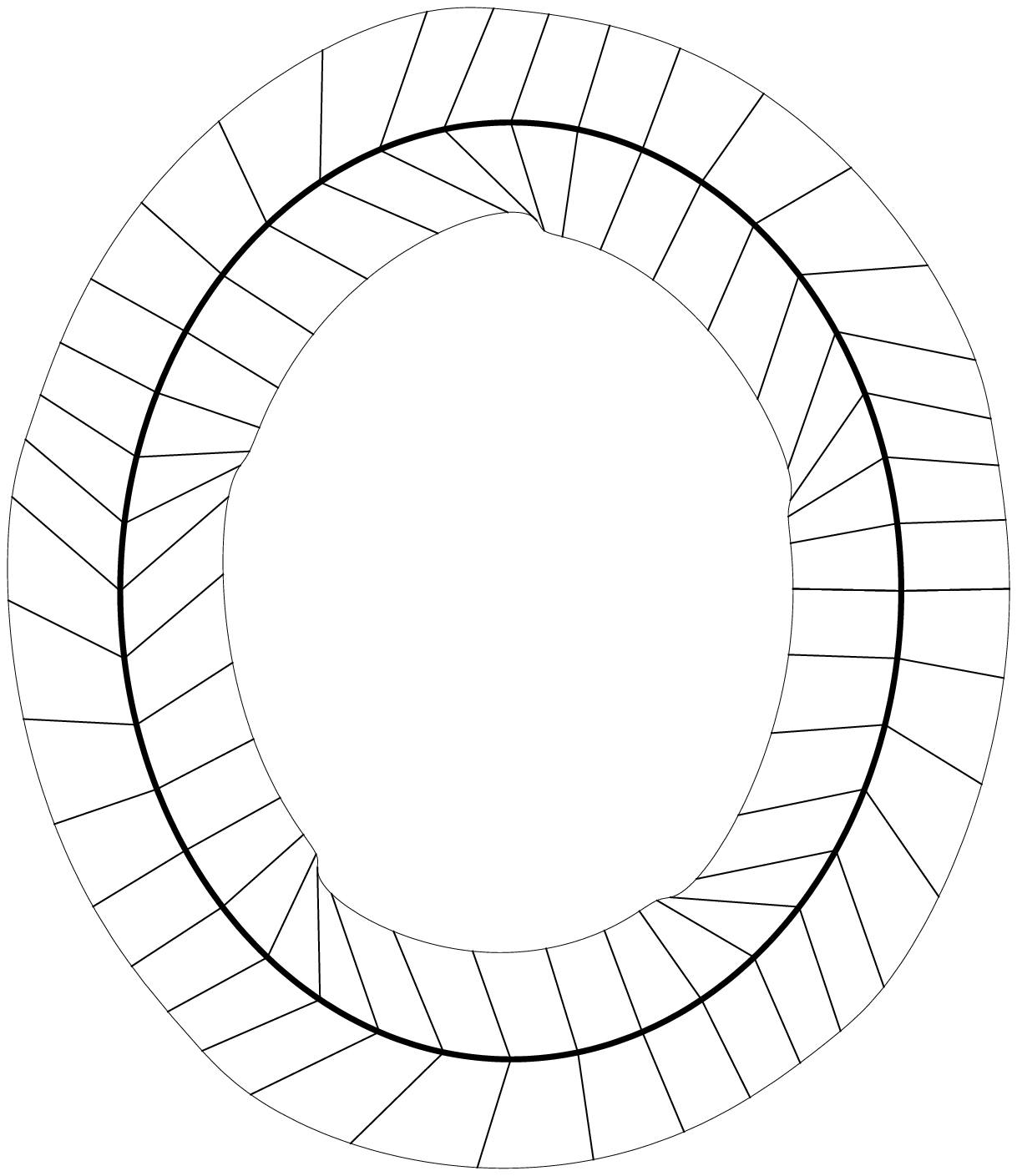}\,\,\,\,\,
\includegraphics[height=3.9cm]{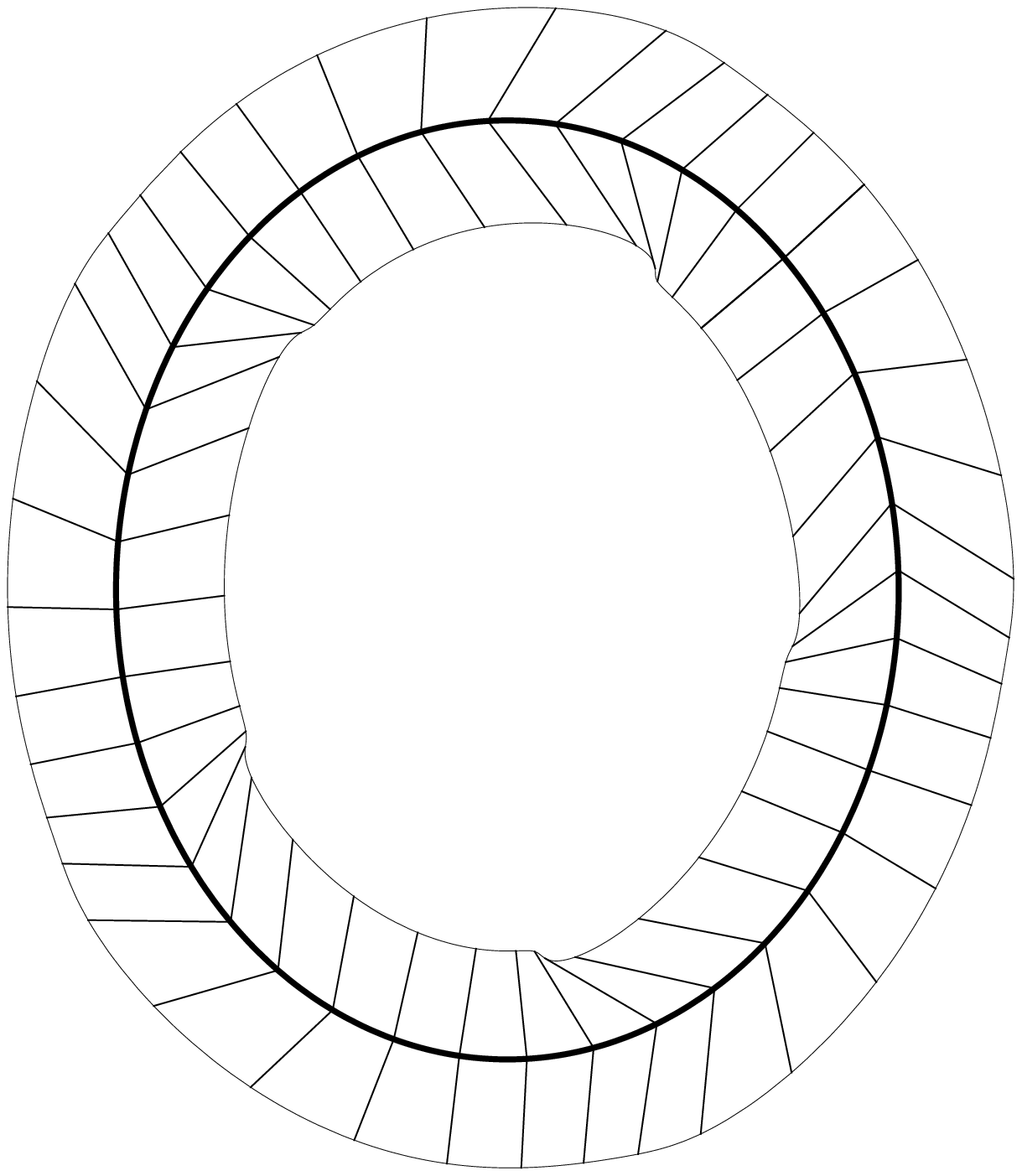}\,\,\,\,\,
\includegraphics[height=3.9cm]{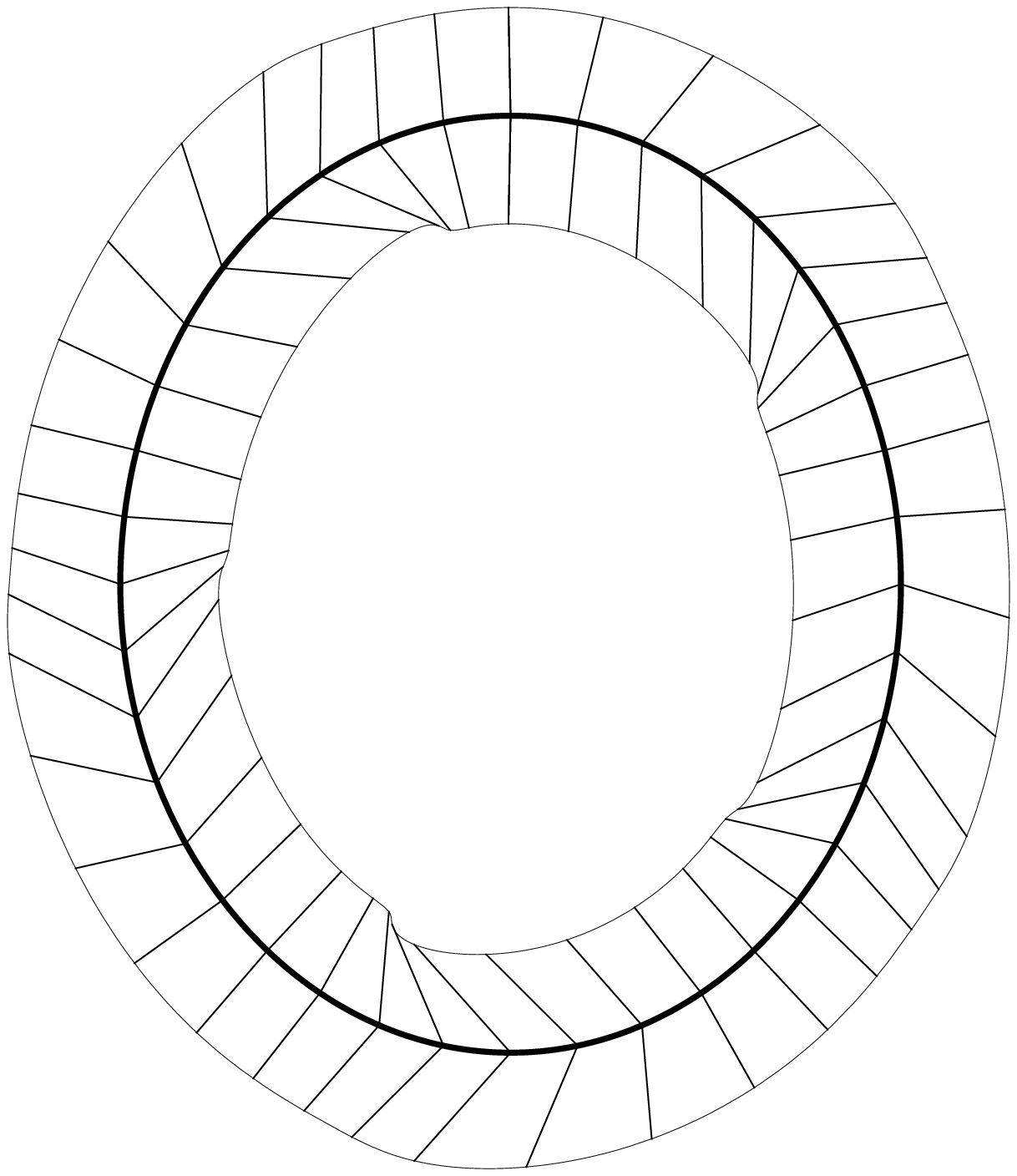}
\caption{The ruling directions on the ellipse 
of three non-congruent curved folding along $C_3$.}
\label{fig:closed}
\end{center}
\end{figure}

\appendix
\section{Symmetries of curves}

In this appendix, we prove several assertions
on symmetries of curves in $\R^2$ or $\R^3$:
Let $\mb c:\mathbb I_l\to \R^3$ be an embedded arc parametrized by
arc-length, and denote by $C$ its image, where $\mathbb I_l=[-l/2,l/2]$.
Let $T$ be a symmetry (cf. Definition \ref{def:S})
 of $C$ which reverses the orientation of $C$.
Here, we ignore the orientation of $C$.
The following assertion holds obviously.

\begin{Proposition}\label{prop:03a}
Supose the space curve $C$ admits a symmetry $T$.
Then $T$ is an involution, and
has the following properties:
\begin{enumerate}
\item $C$ lies in a plane, and $T$ is
the reflection with respect to the plane, or
\item $T$ is a positive or negative non-trivial symmetry of $C$.
\end{enumerate}
\end{Proposition}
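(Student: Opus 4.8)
The plan is to exploit the rigidity of arc-length parametrizations: since $T$ is an isometry of $\R^3$ preserving $C$, the composition $T\circ\mb c$ is again an arc-length parametrization of $T(C)=C$, defined on the same interval $\mathbb I_l=[-l/2,l/2]$. An arc-length parametrization of the embedded arc $C$ on an interval of length $l$ is completely determined by which of the two endpoints it starts from, so $T\circ\mb c$ must traverse $C$ from one of its endpoints at unit speed. Thus I would first establish the dichotomy
\[
T\circ \mb c(s)=\mb c(s)\quad\text{(Case A)},\qquad\text{or}\qquad T\circ \mb c(s)=\mb c(-s)\quad\text{(Case B)},
\]
which is the backbone of the whole argument.

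In Case A the isometry $T$ fixes every point of $C$, so it is a trivial symmetry. Here I would invoke the standing hypothesis that the curvature of $C$ never vanishes, which guarantees that $C$ is not contained in any straight line; hence the affine hull of $C$ is either a plane or all of $\R^3$. The fixed-point set of the isometry $T$ is an affine subspace containing $C$, and if it were all of $\R^3$ then $T$ would be the identity, contradicting that $T$ is a symmetry. Therefore the affine hull of $C$ is a plane $\Pi$, the curve $C$ is planar, and $T$ fixes $\Pi$ pointwise. The only non-identity isometry of $\R^3$ fixing a plane pointwise is the reflection across that plane, which is visibly an involution; this yields conclusion (1).

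In Case B the relation $T\circ\mb c(s)=\mb c(-s)$ moves the endpoints of $C$ (by injectivity of $\mb c$), so $T$ is a non-trivial symmetry, and since every isometry of $\R^3$ is either orientation preserving or orientation reversing, conclusion (2) follows immediately. It remains to verify the involution property. Computing on the curve gives $T^2\circ\mb c(s)=\mb c(s)$, so $T^2$ fixes $C$ pointwise. If $C$ is non-planar, its affine hull is all of $\R^3$ and hence $T^2=\op{id}$ at once. If $C$ is planar, I would argue that $T$ carries the affine hull $\Pi$ of $C$ onto the affine hull of $T(C)=C$, namely $\Pi$ again, so $T$ preserves $\Pi$ and therefore its orthogonal linear part acts as multiplication by $\pm1$ on the normal direction of $\Pi$; consequently $T^2$ acts as $+1$ on that normal direction, and combined with the fact that $T^2$ fixes $\Pi$ pointwise this forces $T^2=\op{id}$.

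The only genuinely delicate point I anticipate is this last step in the planar subcase: a priori $T^2$, being an isometry fixing the planar curve $C$ pointwise, could be the reflection across $\Pi$ rather than the identity, and one must exclude that. The normal-direction parity observation (that the normal component of $T^2$ is necessarily $+1$) resolves it cleanly. Everything else reduces to the rigidity of arc-length parametrizations together with the non-vanishing curvature hypothesis, so I expect the write-up to be short.
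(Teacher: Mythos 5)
Your proposal is correct. Note, however, that the paper offers no proof at all: Proposition \ref{prop:03a} is introduced with the phrase ``the following assertion holds obviously,'' so there is no argument to compare against line by line. What you wrote is precisely the reasoning the paper implicitly relies on: your Case A/Case B dichotomy $T\circ\mb c(s)=\mb c(s)$ or $T\circ\mb c(s)=\mb c(-s)$ is exactly the relation the paper invokes immediately after the proposition (in deriving \eqref{eq:TNB}), and your use of the non-vanishing curvature hypothesis to rule out $C$ lying in a line is where the standing assumptions of the paper genuinely enter --- without it, $T$ could be a rotation of arbitrary angle about a line containing $C$, and the involution claim would fail. Your treatment of the one delicate point, excluding the possibility that $T^2$ is the reflection across the plane $\Pi$ in the planar subcase of Case B, is also correct: since $T$ preserves $\Pi$, its linear part sends the unit normal of $\Pi$ to $\pm$ itself, so the linear part of $T^2$ fixes the normal, and an isometry fixing $\Pi$ pointwise with that property must be the identity. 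In short, you have supplied a complete and correct proof of a statement the authors chose to leave unproved, and it is restricted (as it must be) to the non-closed arc case, since for closed curves the involution claim is false.
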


If $T$ be a non-trivial symmetry of $C$, then
$T\circ \mb c(-s)=\mb c(s)$ holds.
By Differentiating, we have
\begin{align}\label{eq:TNB}
&-T\mb e(-s)=\mb e(s),\quad \kappa(-s)=\kappa(s),\quad
T\mb n(-s)=\mb n(s),\\
\nonumber
&-T\mb b(-s)=\sigma \mb b(s),\quad
\tau(-s)=\sigma \tau(s),
\end{align}
where $\sigma\in \{+,-\}$ is the sign of $\det(T)$.
Using this, we show the following assertion:

\begin{Proposition}\label{lem:03a}
The space curve $C$ admits a positive symmetry 
and a negative symmetry $($cf. Definition \ref{def:S}$)$
at the same time if and only if $C$ lies in a plane and
$C$ has a non-trivial symmetry.
\end{Proposition}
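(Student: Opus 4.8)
The plan is to prove both implications directly, using Proposition~\ref{prop:03a} together with the relations \eqref{eq:TNB} that record how a symmetry acts on the curvature and torsion. The crucial preliminary observation I would isolate first is that \emph{every positive symmetry of $C$ is automatically non-trivial}. Indeed, since $\kappa>0$ the curve $C$ affinely spans its ambient plane $\Pi$ (in the planar case) or all of $\R^3$, so the only non-identity isometry of $\R^3$ fixing $C$ pointwise is the reflection $T_0$ across $\Pi$ when $C$ is planar; this $T_0$ is orientation-reversing, hence negative. Consequently a trivial symmetry, when it exists at all, is this negative reflection $T_0$, and a positive symmetry can never be trivial.

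For the forward implication, suppose $C$ admits a positive symmetry $T_+$ and a negative symmetry $T_-$ simultaneously. By the observation above, $T_+$ is non-trivial, so $C$ automatically has a non-trivial symmetry; it remains only to show that $C$ is planar. I would split on the nature of $T_-$. If $T_-$ is trivial, then by the observation $C$ already lies in the plane $\Pi$ and we are done. If instead $T_-$ is non-trivial, then both $T_+$ and $T_-$ are non-trivial symmetries, so \eqref{eq:TNB} applies to each. Reading off the torsion relation $\tau(-s)=\sigma\,\tau(s)$ with $\det(T_+)=+1$ and $\det(T_-)=-1$ gives $\tau(-s)=\tau(s)$ from $T_+$ and $\tau(-s)=-\tau(s)$ from $T_-$, whence $\tau\equiv 0$. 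Since $\kappa>0$, a curve of vanishing torsion is planar, so $C$ lies in a plane.

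For the reverse implication, suppose $C\subset\Pi$ and $C$ admits a non-trivial symmetry $T_1$. The reflection $T_0$ across $\Pi$ fixes $C$ pointwise and is not the identity of $\R^3$, so it is a (trivial) negative symmetry of $C$; thus $C$ always possesses a negative symmetry. By Proposition~\ref{prop:03a} the non-trivial symmetry $T_1$ is either positive or negative. If $T_1$ is positive, then $T_1$ and $T_0$ already exhibit a positive and a negative symmetry. If $T_1$ is negative, I would pass to $T_0\circ T_1$: it maps $C$ to $C$, has $\det(T_0\circ T_1)=(-1)(-1)=+1$, and acts on $C$ exactly as $T_1$ does because $T_0$ restricts to the identity on $C$; hence $T_0\circ T_1$ is non-trivial, in particular not the identity, and positive. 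In either case $C$ carries both a positive and a negative symmetry.

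I expect the main obstacle to be not any single computation but the careful bookkeeping of the two orthogonal dichotomies at play: \emph{positive versus negative} (the sign of $\det T$) and \emph{trivial versus non-trivial} (whether $T$ moves a point of $C$). The whole argument hinges on cleanly justifying that the only trivial symmetry is the negative reflection $T_0$, which is what lets the forward direction treat the ``$T_-$ trivial'' case separately and guarantees that $T_+$ is non-trivial. Once that structural fact is secured, the torsion-parity step via \eqref{eq:TNB} closes the forward direction and the composition $T_0\circ T_1$ closes the reverse direction.
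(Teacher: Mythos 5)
Your proof is correct and takes essentially the same route as the paper's: the forward direction via the torsion parity $\tau(-s)=\sigma\tau(s)$ from \eqref{eq:TNB} forcing $\tau\equiv 0$, and the reverse direction via composing a non-trivial symmetry with the reflection $T_0$ across the plane containing $C$. The only difference is that you explicitly establish that any trivial symmetry must be the (negative) reflection $T_0$, so a positive symmetry is automatically non-trivial, and you split on whether the negative symmetry is trivial before invoking \eqref{eq:TNB}; this handles a case the paper's proof passes over silently, but it is a more careful write-up of the same argument rather than a different one.
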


\begin{proof}
Suppose that $C$ admits a positive symmetry $T_1$
and a negative symmetry $T_2$ at the same time.
By \eqref{eq:TNB},
the torsion function $\tau(s)$ of $\mb c(s)$
satisfies $(-1)^{i+1}\tau(-s)=\tau(s)$.
So $\tau$ must vanish identically, that is, $C$
lies in a plane.
Since the trivial symmetry $C$ is 
uniquely determined
and is the reflection with respect to $\Pi$,
the symmetry $T_1$ of $C$ must be non-trivial, and 
the \lq \lq only if" part is proved.

To prove the converse, we suppose
that $C$ lies in a plane and
$C$ has a non-trivial symmetry.
Then each point of $C$ is fixed by
the reflection $T_0$ with respect to the plane.
We let $T_1$ be a non-trivial symmetry of $C$.
If $T_1$ is a positive (resp. negative) symmetry of $C$, 
then $T_2:=T_0\circ T_1$ is a negative (resp. positive) symmetry of $C$.
So we obtain the conclusion.
\end{proof}

\begin{Proposition}\label{prop:Finite}
{\it Let $\mb c:\mathbb T^1_l \to \R^3$ $(l>0)$
be an embedded $C^\infty$-regular curve.
Then  
$C:=\mb c(\mathbb T^1_l)$
has at most finitely many symmetries, unless $C$ is a circle}.
\end{Proposition}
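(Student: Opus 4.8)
The plan is to reduce the enumeration of symmetries to a counting problem for the arc-length reparametrizations they induce, and then to control these via the curvature and torsion of $C$. First I would parametrize $C$ by arc-length, $\mb c:\mathbb{T}^1_l\to\R^3$. Any symmetry $T$ is an isometry of $\R^3$ with $T(C)=C$, so it preserves arc-length and induces a relation $T\circ\mb c(s)=\mb c(\sigma s+d)$ for some $\sigma\in\{1,-1\}$ and $d\in\R/l\Z$. Differentiating this relation through the Frenet equations, exactly as in \eqref{eq:TNB}, the linear part $A$ of $T$ satisfies $A\mb e(s)=\sigma\mb e(\sigma s+d)$, $A\mb n(s)=\mb n(\sigma s+d)$ and $A\mb b(s)=\sigma\epsilon\,\mb b(\sigma s+d)$, where $\epsilon:=\det A\in\{1,-1\}$ and we use that the curvature of $C$ never vanishes. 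Since $A$ carries the orthonormal Frenet frame at one point to an orthonormal frame, $A$ is determined by the triple $(\sigma,\epsilon,d)$, and then the translation part is fixed by $T(\mb c(0))=\mb c(d)$; hence $T\mapsto(\sigma,\epsilon,d)$ is injective. Thus it suffices to prove that only finitely many triples $(\sigma,\epsilon,d)$ can occur.

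The same computation yields the invariance relations $\kappa(\sigma s+d)=\kappa(s)$ and $\tau(\sigma s+d)=\epsilon\,\tau(s)$ for all $s$, where $\kappa,\tau$ are the curvature and torsion of $\mb c$. I would then introduce
$$
G:=\{d\in\R/l\Z \,;\, \kappa(\cdot+d)=\kappa(\cdot)\ \text{and}\ \tau(\cdot+d)=\tau(\cdot)\},
$$
which is a closed subgroup of the circle $\R/l\Z$, hence either finite or all of $\R/l\Z$. If $G=\R/l\Z$, then both $\kappa$ and $\tau$ are constant; by the fundamental theorem of space curves such a curve is congruent to a circular helix, which fails to be closed unless $\tau\equiv0$, so $C$ is a circle. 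Therefore, when $C$ is not a circle, $G$ is finite.

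It remains to bound, for each of the four sign choices $(\sigma,\epsilon)$, the set of admissible $d$. For the choice $\sigma=\epsilon=1$ this set is exactly $G$. For any other fixed $(\sigma,\epsilon)$, if $d_1,d_2$ are both admissible then the two invariance relations force $d_1-d_2\in G$: a substitution $w=\pm s+d_2$ reduces them to simultaneous periodicity of $\kappa$ and $\tau$. Hence the admissible set is either empty or a single coset of $G$, and so is finite. Combining the four cases shows that only finitely many triples $(\sigma,\epsilon,d)$ occur, and the conclusion follows.

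I expect the constant-curvature case to be the main obstacle: there $\kappa$-invariance imposes no restriction on $d$, and finiteness hinges on bringing in the torsion together with the global fact that a closed space curve cannot be a non-planar curve of constant curvature and torsion. As a robust alternative that sidesteps the Frenet frame entirely, one may note that every symmetry fixes the arc-length centroid of $C$; conjugating it to the origin places $\mathrm{Sym}(C)$ inside the compact group $O(3)$, where it is closed and hence a compact Lie group. Were it infinite, it would be positive-dimensional and would contain a one-parameter rotation subgroup about some axis, and invariance of the connected compact $1$-manifold $C$ under all these rotations would force $C$ to be a single orbit circle, again a contradiction.
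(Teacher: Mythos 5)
Your proposal is correct, and it takes a genuinely different route from the paper's proof. The paper argues by point-counting: assuming $C$ is not a circle, it takes the curvature $\kappa$ to be non-constant, uses Sard's theorem to choose a regular value $r$ with $\kappa^{-1}(r)=\{s_1,\dots,s_n\}$ finite, notes that every symmetry permutes the finite set $\{\mb c(s_1),\dots,\mb c(s_n)\}$, and then derives a contradiction from an infinite sequence of symmetries, since infinitely many of them would have to agree at $\mb c(s_1)$ and only finitely many symmetries can fix a given point of $C$. Your main argument instead encodes a symmetry $T$ by the triple $(\sigma,\epsilon,d)$, where $s\mapsto\sigma s+d$ is the induced isometry of $\mathbb T^1_l$ and $\epsilon$ is the determinant of the linear part, verifies that this encoding is injective, and reduces everything to the fact that the set $G$ of common translational periods of $(\kappa,\tau)$ is a closed subgroup of $\R/l\Z$, hence either finite or all of $\R/l\Z$; the full case is excluded because a curve with constant curvature and torsion is a helix or a circle, and each sign pair $(\sigma,\epsilon)$ contributes at most one coset of $G$. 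Two points of comparison are worth making. First, your use of the torsion makes the argument robust in the constant-curvature case: closed non-circular curves of constant curvature do exist, so the paper's step ``since $\mb c$ is not a circle, $\kappa$ is not constant'' is the most delicate point of its proof, and your argument does not rely on it. Second, your Frenet-frame argument needs $\kappa$ to be nowhere vanishing so that $\mb n$, $\mb b$ and $\tau$ are defined; this is a standing hypothesis in the body of the paper but is not part of the statement of this proposition, whereas the paper's proof uses only the scalar function $\kappa$. Your centroid/compact-group alternative closes exactly this gap: every symmetry fixes the arc-length centroid, so after translating it to the origin the symmetry group is a closed (hence compact Lie) subgroup of $O(3)$, and an infinite such group would contain a one-parameter rotation group forcing the connected closed curve $C$ to be a single orbit circle. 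That argument needs no curvature hypotheses at all and is the cleanest and most general of the three.
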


\begin{proof}
We may assume that $\mb c(s)$ has an arc-length parametrization
and denote by $\kappa(s)$ its curvature function.
Since $\mb c $ is not a circle, $\kappa$ is not a constant function.
By Sard's theorem, the set of critical values of $\kappa$ is of measure zero.
We choose a regular value $r$ of the curvature function 
$\kappa$
such that $\kappa^{-1}(r)$ 
consists of finitely many points $s_1,\dots,s_n \in \mathbb T^1_l$.
Then we set $\mb x_i=\mb c(s_i)$ ($i=1,\dots,n$).
Suppose that there are infinitely many distinct non-trivial symmetries
$\{T_j\}_{j=0}^\infty$ of $C$.
Then
$
T_j(\mb x_1)\in \{\mb x_1,\dots,\mb x_n\}.
$
So, there exists $j_0\in \{1,\dots,n\}$ such that
$
T_j(\mb x_1)=\mb x_{j_0}=T_1(\mb x_1)
$
for infinitely many $j\ge 2$. 
Since there exist at most finitely many
non-trivial symmetries of $C$ fixing $\mb x_1$,
$\{T_j\}$ must consist of at most finitely many isometries of $\R^3$
a contradiction.
\end{proof}

\begin{acknowledgments}
The authors thank Professors Jun Mitani 
and Wayne Rossman for valuable comments.
\end{acknowledgments}

\bigskip

\end{document}